        \numberwithin{equation}{section}
        \theoremstyle{plain}
        \newtheorem{theorem}[equation]{Theorem}
        \newtheorem{corollary}[equation]{Corollary}
        \newtheorem{lemma}[equation]{Lemma}
        \newtheorem{proposition}[equation]{Proposition}
        \newtheorem{maintheorem}{Theorem}
        \theoremstyle{definition}
        \newtheorem{definition}[equation]{Definition}
        \newtheorem{example}[equation]{Example}
        \newtheorem*{example*}{Example}
        \theoremstyle{remark}
        \newtheorem{remark}[equation]{Remark}
        \newtheorem*{remark*}{Remark}
        \newcommand{\suchthat}{\,:\,}
        \newcommand{\itemref}[1]{\eqref{#1}}
        \newcommand{\loccit}{[\emph{loc.\ cit.}]\xspace}
        \newcommand{\C}{\mathbb{C}}
        \newcommand{\Z}{\mathbb{Z}}
        \newcommand{\Q}{\mathbb{Q}}
        \newcommand{\Orb}{\mathcal{O}}   
       \DeclareMathOperator{\spec}{Spec} 
           \newcommand{\et}{\mathrm{\acute{e}t}}
        \newcommand{\MOD}{\mathsf{Mod}}    
        \DeclareMathOperator{\im}{im} \DeclareMathOperator{\Hom}{Hom}
        \newcommand{\COHO}[1]{\mathcal{H}^{{#1}}}
        \newcommand{\trunc}[1]{\tau^{{#1}}}
        \newcommand{\RDERF}{\mathsf{R}}
        \newcommand{\LDERF}{\mathsf{L}}
        \newcommand{\DCAT}{\mathsf{D}}
        \newcommand{\RHom}{\RDERF\!\Hom}
        \newcommand{\QCOH}{\mathsf{QCoh}}
        \newcommand{\COH}{\mathsf{Coh}}
        \newcommand{\ID}[1]{\mathrm{Id}_{#1}}
        \newcommand{\tensor}{\otimes}
\renewcommand{\subset}{\subseteq}
\numberwithin{equation}{section}
\newcommand{\qcsubscript}{\mathrm{qc}} 
\newcommand{\DQCOH}[1][]{\DCAT_{\qcsubscript{#1}}} 
\newcommand{\holim}[1]{\underset{#1}{\mathrm{holim}}\,}
\newcommand{\spref}[1]{\href{http://stacks.math.columbia.edu/tag/#1}{#1}}
\newcommand{\labitem}[2]{%
\def\@itemlabel{(\textbf{#1})}
\item
\def\@currentlabel{\textbf{#1}}\label{#2}}
\setlist[enumerate]{font=\normalfont}
\title{GAGA theorems}
\date{May 17, 2022}
\author[J. Hall]{Jack Hall}
\subjclass[2010]{Primary 14F05; secondary 32C35, 14G22, 14D15}
\theoremstyle{remark}
\newtheorem*{claim*}{Claim}
\newcommand{\shfcoho}{\mathrm{H}}
\newcommand{\pCOH}{\mathrm{pc}}
\newcommand{\DPCOH}[2][]{\DCAT_{\pCOH{#1}}^{{#2}}}
\newcommand{\thick}[1]{\langle {#1} \rangle}
\newcommand{\Vect}{\mathsf{Vect}}
\renewcommand{\mathbb}{\mathbf}
\DeclareMathOperator{\cone}{cone}
\newcounter{Counter:Zequiv}
\newtheorem{setup}[equation]{Setup}
\def\namedlabel#1#2{\begingroup
   \def\@currentlabel{#2}%
   \label{#1}\endgroup
}
\begin{document}
\begin{abstract}
  We prove a new and unified GAGA theorem. This recovers all analytic
and formal GAGA results in the literature, and is also valid in the
non-noetherian setting. Our method can also be used to
establish various Lefschetz theorems and comparison results for the Fargues--Fontaine curve. 

\end{abstract}
\maketitle
\section{Introduction}
Let $X$ be a scheme. Frequently associated to $X$ is a morphism 
\[
  c \colon \mathcal{X} \to X,
\]
where $\mathcal{X}$ is some type of analytic space. 
When $X$ is proper over $\spec R$, where $R$ is a noetherian ring (usually connected to the construction of $\mathcal{X}$), there is often an induced comparison isomorphism on cohomology of coherent sheaves:
\[
\shfcoho^i(X,F) \simeq \shfcoho^i(X,c^*F)
\]
and an equivalence of abelian categories of coherent sheaves:
\[
  c^* \colon \COH(X) \simeq \COH(\mathcal{X}).
\]
Since Serre's famous paper \cite{MR0082175}, such results have been
called ``GAGA theorems''. Restricting these comparison isomorphisms
and equivalences to specific subcategories of sheaves (e.g., vector
bundles or finite \'etale algebras) leads to both local and global ``Lefschetz
Theorems'' \cite{SGA2}. We briefly recall some examples of these phenomena
below.
\subsection{Archimedean analytification}
Assume $X$ is proper over $\spec \C$. Naturally associated to $X$ is an analytic space $X_{\mathrm{an}}$.  This
consists of endowing the $\C$-points of $X$ with its Euclidean
topology and its sheaf of holomorphic functions. There is an induced morphism of locally 
ringed spaces $c\colon X_{\mathrm{an}} \to X$. This is the setting of the original GAGA theorems \cite{MR0082175} and \cite[XII.4.4]{SGA1}. 
\subsection{Non-archimedean analytification}
Assume $X$ is proper over $\spec K$,
where $K$ is a complete non-archimedean field. Then there are various
analytifications associated to $X$: the Berkovich $X_{\mathrm{Berk}}$,
the adic $X_{\mathrm{adic}}$, and the rigid
$X_{\mathrm{rig}}$. There is also a GAGA theorem in this
context \cite{kopf_rigid_gaga} (see 
\cite{MR2266885} for a more recent account).
\subsection{Formal completion}
Let $Z =V(I)\subseteq X$ be a closed subscheme. Then there is the formal
completion $\hat{X}_{/Z}$. The locally ringed space
$\hat{X}_{/Z}$ has underlying topological space $Z$ and sheaf of
rings $\Orb_{\hat{X}_{/Z}} = \varprojlim_n \Orb_X/I^n$. If $X$ is proper over a complete noetherian ring, then there is a GAGA theorem \cite[III${}_1$]{EGA}. 
\subsection{Unification}
All of these results have previously been proved separately, though
their general strategies are very similar. First one treats 
projective spaces directly via a direct computation of the cohomology of line bundles, then a d\'evissage is performed using Chow's Lemma.

The main theorem of this article is that these GAGA results are true
much more generally and can be put into a single framework. Essentially all existing results
follow very easily from ours (see \S\ref{S:applications}). We state
one such result in the noetherian situation for locally ringed
$G$-spaces.
\begin{maintheorem}\label{MT:A}
  Let $R$ be a noetherian ring. Let $X \to \spec R$ be a proper
  morphism of schemes. Let $c\colon \mathcal{X} \to X$ be a morphism
  of locally ringed $G$-spaces. Let $X_{\mathrm{cl}}$ be the set of
  closed points of $X$ and let
  $\mathcal{X}_{\mathrm{cl},c} = c^{-1}(X_{\mathrm{cl}})$. Assume that
  \begin{enumerate}
  \item \label{GTI:A:coh}$\Orb_{\mathcal{X}}$ is coherent;
  \item\label{GTI:A:bd} if $\mathcal{F} \in \COH(\mathcal{X})$, then
    $\oplus_i \shfcoho^i(\mathcal{X},\mathcal{F})$ is a finitely
    generated $R$-module; 
  \item\label{GTI:A:bijective}
    $c\colon \mathcal{X}_{\mathrm{cl},c} \to X_{\mathrm{cl}}$ is
    bijective;
  \item\label{GTI:A:support} if $\mathcal{F} \in \COH(\mathcal{X})$
    and $\mathcal{F}_x=0$ for
    all $x\in \mathcal{X}_{\mathrm{cl},c}$, then $\mathcal{F} = 0$; and
  \item \label{GTI:A:flatunr_along_S} if $x\in \mathcal{X}_{\mathrm{cl},c}$, then
    $\Orb_{X,c(x)} \to \Orb_{\mathcal{X},x}$ is flat and
    $\kappa(c(x)) \to \kappa(c(x)) \tensor_{\Orb_{X,c(x)}}
    \Orb_{\mathcal{X},x}$ is an isomorphism.
  \end{enumerate}
  Then the comparison map:
  \[
    \shfcoho^{i}(X,F) \to \shfcoho^i(\mathcal{X}, c^*F)
  \]
  is an isomorphism for all coherent sheaves $F$ on $X$ and
  \[
    c^* \colon \COH(X) \to \COH(\mathcal{X})
  \]
  is an exact equivalence of abelian categories. 
\end{maintheorem}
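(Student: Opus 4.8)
The plan is to prove, in order: (i) $\LDERF c^*$ is $t$-exact and carries $\DCAT^-_{\COH}(X)$ into $\DCAT^-_{\COH}(\mathcal{X})$; (ii) it is fully faithful; and (iii) $c^*$ is essentially surjective on coherent sheaves. Then (ii) and (iii) yield an equivalence on $\DCAT^-_{\COH}$ by a bootstrap argument described at the end, and the two displayed statements are the special cases $E=\Orb_X$, resp.\ $E,F$ concentrated in degree $0$. For (i): \itemref{GTI:A:coh} makes $c^*$ send finitely presented modules to coherent ones, so for coherent $F$ on $X$ and $i>0$ the sheaf $\COHO{-i}(\LDERF c^*F)$ is coherent, with stalk $\Tor_i^{\Orb_{X,c(x)}}(\Orb_{\mathcal{X},x},F_{c(x)})=0$ at each $x\in\mathcal{X}_{\mathrm{cl},c}$ by the flatness in \itemref{GTI:A:flatunr_along_S}; hence it vanishes by \itemref{GTI:A:support}, so $\LDERF c^*=c^*$ is exact, which with the preservation of coherence gives (i). One also sees that $\LDERF c^*$ is conservative: $c^*\mathcal{F}=0$ forces $\mathcal{F}_{c(x)}=0$ for every $x\in\mathcal{X}_{\mathrm{cl},c}$ (flat local ring maps are faithfully flat), whence $\mathcal{F}=0$, because by \itemref{GTI:A:bijective} the points $c(x)$ exhaust the closed points of $X$ and $X$ is proper over $\spec R$.

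For (ii), a way-out argument reduces $\Hom_{\DCAT(X)}(E,F[i])\to\Hom_{\DCAT(\mathcal{X})}(\LDERF c^*E,\LDERF c^*F[i])$ to the case where $E$ and $F$ are coherent sheaves, so that $\SRHom_X(E,F)\in\DCAT^+_{\COH}(X)$. The key step is that $\LDERF c^*\SRHom_X(E,F)\to\SRHom_{\mathcal{X}}(\LDERF c^*E,\LDERF c^*F)$ is an isomorphism: both sides lie in $\DCAT^+_{\COH}(\mathcal{X})$ by \itemref{GTI:A:coh}, and at a stalk $x\in\mathcal{X}_{\mathrm{cl},c}$ the map is the base-change morphism $\RHom_{\Orb_{X,c(x)}}(E_{c(x)},F_{c(x)})\ltensor_{\Orb_{X,c(x)}}\Orb_{\mathcal{X},x}\to\RHom_{\Orb_{\mathcal{X},x}}\bigl(E_{c(x)}\ltensor_{\Orb_{X,c(x)}}\Orb_{\mathcal{X},x},\,F_{c(x)}\ltensor_{\Orb_{X,c(x)}}\Orb_{\mathcal{X},x}\bigr)$, an isomorphism because $\Orb_{\mathcal{X},x}$ is flat over the noetherian local ring $\Orb_{X,c(x)}$; its cone is coherent with vanishing stalks on $\mathcal{X}_{\mathrm{cl},c}$, hence zero by \itemref{GTI:A:support}. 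Applying $\RDERF\Gamma$ and running the hypercohomology spectral sequence, (ii) is reduced to the \emph{cohomology comparison}: $\shfcoho^i(X,F)\to\shfcoho^i(\mathcal{X},c^*F)$ is an isomorphism for every coherent $F$ on $X$ and every $i$.

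This cohomology comparison is, I expect, the principal obstacle. Both $\RDERF\Gamma(X,F)$ and $\RDERF\Gamma(\mathcal{X},c^*F)$ belong to $\DCAT^b_{\COH}(R)$, the latter by \itemref{GTI:A:bd}, so the comparison map is an isomorphism once it is one after $-\ltensor_R\hat R_{\mathfrak{m}}$ for every maximal ideal $\mathfrak{m}\subset R$ (completion at $\mathfrak{m}$ is faithfully flat over $R_{\mathfrak{m}}$, and a bounded coherent $R$-complex acyclic at every maximal ideal is acyclic). On the $X$-side the theorem on formal functions identifies $\RDERF\Gamma(X,F)\ltensor_R\hat R_{\mathfrak{m}}$ with $\holim{n}\RDERF\Gamma(X,F/\mathfrak{m}^nF)$; one needs the analogue for $\mathcal{X}\to\spec R$, i.e.\ compatibility of $\RDERF\Gamma(\mathcal{X},-)$ with $\mathfrak{m}$-adic base change, which I would deduce from \itemref{GTI:A:bd}, and then the two towers are compared termwise. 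Since $F/\mathfrak{m}^nF$ is supported on an infinitesimal thickening of the closed fibre $X_{\mathfrak{m}}$, an induction on $n$ using the short exact sequences $0\to\mathfrak{m}^nF/\mathfrak{m}^{n+1}F\to F/\mathfrak{m}^{n+1}F\to F/\mathfrak{m}^nF\to0$, the exactness of $c^*$, and the five lemma reduces the whole comparison to coherent sheaves --- in particular $\Orb_X$ itself --- on a proper scheme over the field $\kappa(\mathfrak{m})$. This residual field case is the real point: classically one reduces it via Chow's lemma to line bundles on projective space, where an explicit acyclic covering makes the comparison transparent, but since the present framework is designed to avoid any such device, this step must be carried out by a genuinely different argument, presumably a direct exploitation of \itemref{GTI:A:bijective}--\itemref{GTI:A:flatunr_along_S}.

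For (iii), in the manner of Grothendieck's existence theorem, I would present a coherent $\Orb_{\mathcal{X}}$-module $\mathcal{G}$ as the cokernel of a morphism between finite direct sums of sheaves of the form $c^*(\text{a twisting sheaf on }X)$, and use the $\Ext$-comparison established in (ii), applied to those sheaves, to see that the presenting morphism is $c^*$ of an algebraic one; then $\mathcal{G}\cong c^*(\text{its cokernel on }X)$. The substantive input is the existence of such a presentation, which again has to come from the hypotheses rather than from a projective embedding. Finally the bootstrap to $\DCAT^-_{\COH}$: every $\cplx{G}\in\DCAT^-_{\COH}(\mathcal{X})$ is $\holim{n}\trunc{\geq -n}\cplx{G}$, each truncation is built by finitely many triangles from shifts of coherent sheaves on $\mathcal{X}$, each of which lies in the essential image of $\LDERF c^*$ by (iii), and the essential image is a triangulated subcategory by (ii); the homotopy limit lifts to $\DCAT^-_{\COH}(X)$ because the tower is eventually constant in each cohomological degree while $\LDERF c^*$ is $t$-exact and conservative. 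Evaluating the resulting equivalence on $\Orb_X$ and on sheaves in degree $0$ gives the two displayed consequences.
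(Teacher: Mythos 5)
Your outline is not a proof: at the two places where the real work has to happen you acknowledge the gap rather than filling it.  In step (ii) you reduce the cohomology comparison to coherent sheaves on a proper scheme over a field and then write that ``this step must be carried out by a genuinely different argument, presumably a direct exploitation of (3)--(5)'' without supplying one.  In step (iii) you reduce essential surjectivity to the existence of a presentation of an arbitrary $\mathcal{G}\in\COH(\mathcal{X})$ by pullbacks of ``twisting sheaves,'' and again note that the substantive input ``has to come from the hypotheses rather than from a projective embedding,'' but there is no such presentation available in this abstract setting---$\mathcal{X}$ is merely a locally $G$-ringed space with coherent structure sheaf, not projective over anything, and nothing in (1)--(5) provides global generators.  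In fact these are precisely the two steps that, in the classical GAGA proofs, require Chow's Lemma plus the explicit computation for projective space; the entire point of the paper is to replace that d\'evissage, and your proposal has not found the replacement.

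The replacement the paper uses is the construction of a right adjoint.  Condition (2), together with the Bondal--van den Bergh/Stacks Project finiteness theorem for pseudo-coherent complexes (Theorem \ref{T:finiteness}, via Proposition \ref{P:adjoints}), produces a right adjoint
$\RDERF c_{\qcsubscript,*}\colon \DPCOH{*}(\mathcal{X})\to\DPCOH{*}(X)$
to $\LDERF c^*$.  Once this exists, the projection formula for perfect $N$ on $X$,
$(\RDERF c_{\qcsubscript,*}\mathcal{M})\otimes^{\LDERF}_{\Orb_X}N\simeq \RDERF c_{\qcsubscript,*}(\mathcal{M}\otimes^{\LDERF}_{\Orb_{\mathcal{X}}}\LDERF c^*N)$,
is a purely formal consequence of dualizability (Lemma \ref{L:projection_formula}); and Nakayama's Lemma at closed points reduces the unit and counit being quasi-isomorphisms to the case of residue-field skyscrapers, where conditions (3)--(5) identify the categories.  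Essential surjectivity then costs nothing: given $\mathcal{M}\in\DCAT^b_{\COH}(\mathcal{X})$, one simply sets $M=\RDERF c_{\qcsubscript,*}\mathcal{M}$ and checks the counit---there is no need to find a presentation of $\mathcal{M}$ by algebraic generators, which is exactly the step you cannot carry out.  This adjoint-plus-projection-formula mechanism is the content of Theorems \ref{T:pseudo-coherent-GAGA} and \ref{T:coherent-GAGA}, of which the Main Theorem is a special case; your proposal, by contrast, leads back to the classical route the paper is designed to avoid.  (Your part (i), the $t$-exactness and conservativity, is correct and essentially as in the paper; the way-out reduction and base-change-of-$\SRHom$ in the first half of (ii) are also sound, though the paper does not need them.)
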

In future work, we will apply  it to algebraic stacks and
their derived counterparts. Our approach is related
to the non-noetherian GAGA results in the Stacks Project
\cite[Tag \spref{0DIJ}]{stacks-project}.
While Theorem \ref{MT:A} (and \ref{T:lefschetz-dim1}, below) is an immediate consequence of much more general results
(Theorems \ref{T:general-lefschetz}, \ref{T:npGAGA}, and \ref{T:ncGAGA}), we give a simple and direct proof in \S\ref{S:direct}.
\begin{remark}\label{R:MT:A}
  Basic properties of noetherian local rings show that
  Theorem \ref{MT:A}\itemref{GTI:A:flatunr_along_S} is implied by:
  \begin{enumerate}
  \item[(\ref{GTI:A:flatunr_along_S}')] if
    $x\in \mathcal{X}_{\mathrm{cl},c}$, then $\Orb_{\mathcal{X},x}$ is
    noetherian and the morphism
    $\Orb_{X,c(x)} \to \Orb_{\mathcal{X},x}$ induces an isomorphism on
    maximal-adic completions.
  \end{enumerate}
\end{remark}
Conditions \itemref{GTI:A:coh} of \itemref{GTI:A:bd} of Theorem
\ref{MT:A} are typically established by theorems in analysis. For
example, in the archimedean case, they are Oka's Theorem
\cite{oka_coherence} and Cartan--Serre's finiteness theorem
\cite{cartan-serre_finiteness}. In the non-archimedian case, it is
Kiehl's finiteness theorem \cite{MR0210948}. In the case of formal
schemes, it is Grothendieck's finiteness theorem
\cite[III.3.4.2]{EGA}. Note that there is no circularity here: none of
these results have anything to do with GAGA theorems or their
proofs. It is even possible to use the analytic finiteness results to prove the algebraic ones \cite{MR3330765}. The recent work \cite{zavyalov2021coherent} shows that the
ideas in this article persist to almost mathematics. Theorem
\ref{MT:A} (and its generalizations considered in this paper) are
currently not well-adapted to local or henselian type analytifications
(e.g., \cite[Thm.~C.1.1]{abramovich_temkin_factorization_qe_char0} or
\cite{devadas-phd-henselian}), however, as the boundedness condition
\itemref{GTI:A:bd} is established via induction and GAGA.

Without finiteness results, however, we can establish the
following.
\begin{maintheorem}\label{T:lefschetz-dim1}
  Let $X$ be a quasi-compact and quasi-separated scheme. Let
  $c\colon \mathcal{X} \to X$ be a morphism of locally ringed
  $G$-spaces. Let $x\in X$ be a closed point. Assume
  \begin{enumerate}
  \item \label{TI:lefschetz-dim1:qaff} $U=X-\{x\}$ is quasi-affine;
  \item \label{TI:lefschetz-dim1:pt}$c^{-1}(x)$ consists of a single point $y$; 
  \item \label{TI:lefschetz-dim1:local}$\Orb_{X,x} \to \Orb_{\mathcal{X},y}$ is flat and
    $\kappa(x) \to \kappa(x) \tensor_{\Orb_{X,x}}
    \Orb_{\mathcal{X},y}$ is an isomorphism; and
  \item \label{TI:lefschetz-dim1:coho}
    $\shfcoho^0(X,\Orb_X) \simeq
    \shfcoho^0(\mathcal{X},\Orb_{\mathcal{X}})$ and
    $\shfcoho^1(X,\Orb_X) \hookrightarrow
    \shfcoho^1(\mathcal{X},\Orb_{\mathcal{X}})$.
  \end{enumerate}
  Then the comparison map:
  \[
  \shfcoho^0(X,E) \to \shfcoho^0(\mathcal{X},c^*E)	
  \]	
 is an isomorphism for all vector bundles $E$ of finite rank on $X$. 
\end{maintheorem}
Theorem \ref{T:lefschetz-dim1} is
sufficient to establish the full-faithfulness of the analytification
of vector bundles on the Fargues--Fontaine curve. Surprisingly, 
Theorem \ref{T:lefschetz-dim1} is a consequence of a general Lefschetz-style 
result (see \S\ref{S:lefschetz}), which establishes all existing full faithfulness results in the literature (see \S\ref{S:applications}).

In order to establish non-noetherian GAGA results in formal geometry
(e.g., \cite{fujiwara-kato-I} or \cite[Tag
\spref{0DIA}]{stacks-project}), the non-flatness of completions
complicates matters significantly. These are dealt with in
\S\ref{S:equivalences}. We work quite generally and expect these
methods to be applicable to other types of GAGA and Lefschetz-style
results (e.g., in o-minimal geometry). In Appendix
\S\ref{A:projection-formula}, we establish functoriality results for
the projection formula in symmetric monoidal categories, which turn
out to be critical for results like Theorem \ref{MT:A}.
\subsection*{Acknowledgements}
It is a pleasure to thank Amnon Neeman and David Rydh for several
supportive and useful discussions. We also wish to thank Bhargav Bhatt
and Brian Conrad for helpful comments on a
draft. We also wish to thank Rachel Webb for suggesting 
Lemma
\ref{L:projection-functorial-bc}\itemref{LI:projection-functorial-bc:projection}.
\section{Proof of Theorems \ref{MT:A} and \ref{T:lefschetz-dim1}}\label{S:direct}
We now prove Theorem \ref{MT:A} and \ref{T:lefschetz-dim1} in complete generality. Keeping track of the residue fields, while 
fussy, is the crucial calculation. 

We work with the unbounded derived category of $\Orb_X$-modules with
quasi-coherent cohomology sheaves, $\DQCOH(X)$, and the unbounded
derived category of $\Orb_{\mathcal{X}}$-modules,
$\DCAT(\mathcal{X})$. If $X$ is smooth and projective over a field,
then we can instead work with $\DCAT^b_{\COH}(X)$ and
$\DCAT^b_{\COH}(\mathcal{X})$. In general, the unbounded derived
category---while perhaps not strictly necessary---is much more convenient. There
is an unbounded derived pullback
\[	
	\LDERF c^* \colon \DQCOH(X) \to \DCAT(\mathcal{X}).
\]	
Throughout, we let $t\in \mathcal{X}$ and let $s=c(t)$. In the case of 
Theorem \ref{MT:A} we assume that $t\in \mathcal{X}_{\mathrm{cl},c}$ and in 
Theorem \ref{T:lefschetz-dim1} we take $t=y$. 
We now obtain a commutative diagram of ringed $G$-spaces:
\[
	\xymatrix{(\mathcal{X},\kappa(t)) \ar[r]^{c_s} \ar[d]_{i_{t}'} & (X,\kappa(s)) \ar[d]^{i_{s}} \\ \mathcal{X} \ar[r]_c  & X.}
\]
Note that $c_s^* \colon \MOD(X,\kappa(s)) \to \MOD(\mathcal{X},\kappa(t))$
is an exact equivalence of abelian categories because $\MOD(X,\kappa(s)) \simeq \MOD(\kappa(s))$, $\MOD(\mathcal{X},\kappa(t)) \simeq \MOD(\kappa(t))$, and $\kappa(s) \simeq \kappa(t)$ (see Lemma \ref{L:skyscraper-Gringed}). In particular, the natural morphism $\LDERF c^*\kappa(s) \to \kappa(t)$ is an isomorphism.

Now consider the unbounded derived pushforward
\[
	\RDERF c_* \colon \DCAT(\mathcal{X}) \to \DCAT(X),
\]	
which is right adjoint to $\LDERF c^*$. Then $t$ and $s$ are closed points, so $(i_s')_*$ and $(i_s)_*$ are exact. Also, $(c_s)_*$ is exact. In particular, 
$\RDERF c_*\kappa(t) \simeq \RDERF c_*(i_s')_*\kappa(t) \simeq (i_s)_*(c_s)_*\kappa(t) \simeq \kappa(s)$.	
Putting this all together, we have now established that the relevant adjunctions induce isomorphisms:
\begin{align}
	\kappa(s) &\simeq \RDERF c_*\LDERF c^*\kappa(s) & \LDERF c^*\RDERF c_{*}\kappa(t) &\simeq \kappa(t).\label{eq:s}
\end{align}
By general adjoint functor theorems, the inclusion $\DQCOH(X) \subseteq \DCAT(X)$ admits a right adjoint, 
\[
\RDERF Q_X \colon \DCAT(X) \to \DQCOH(X).
\]
Since $\DQCOH(X) \subseteq \DCAT(X)$ is a fully faithful embedding, if $M \in \DQCOH(X)$, then $M \simeq \RDERF Q_X(M)$. Composing $\RDERF Q_X$ with $\RDERF c_*$ above gives a right adjoint
\[
	\RDERF c_{\qcsubscript,*} \colon \DCAT(\mathcal{X}) \to \DQCOH(X),
\]
see \S\ref{S:adjoints} for a more in depth discussion. Thus, we can upgrade \eqref{eq:s} to
\begin{align}
	\kappa(s) &\simeq \RDERF c_{\qcsubscript,*}\LDERF c^*\kappa(s) 
	& \LDERF c^*\RDERF c_{\qcsubscript,*}\kappa(t) &\simeq \kappa(t) \label{eq:qcs}.
\end{align}
Let $N \in \DQCOH(X)$ and consider the adjunction
\[
 \eta_N \colon N \to \RDERF c_{\qcsubscript,*}\LDERF c^*N.
\]	
The subcategory $\mathcal{T}$ of $\DQCOH(X)$ with 
objects those $N$ 
for which $\eta_N$ is an isomorphism is certainly thick and 
triangulated. By \eqref{eq:qcs}, it contains $\kappa(s)$. A short inductive 
argument shows that $\mathcal{T}$ contains 
every $N \in \DCAT^b_{\COH}(X)$ such that $\LDERF j_s^*N \simeq 0$, where $j_s \colon X-\{s\} \subseteq X$ (see Lemma \ref{L:good-perfect}). 

We now make another key observation: if $P \in \DQCOH(X)$ is perfect and $\mathcal{N} \in \DCAT(\mathcal{X})$, then there is the \emph{projection formula}:
\begin{equation}
	P \tensor^{\LDERF}_{\Orb_X} (\RDERF c_{\qcsubscript,*}\mathcal{N}) \simeq \RDERF c_{\qcsubscript,*}(\LDERF c^*P \tensor^{\LDERF}_{\Orb_{\mathcal{X}}} \mathcal{N}). \label{eq:projection_special}
\end{equation}
This follows from formal properties of adjoints (see Lemma \ref{L:projection_formula} and Appendix \ref{A:projection-formula}). 

By perfect approximation \cite{MR2346195}, there is a perfect complex $P_s$ 
supported only at $s$ and a morphism $\phi_s \colon P_s \to \kappa(s)$ 
such that $\trunc{\geq 0}\phi_s$ is an isomorphism. But $P_s 
\tensor^{\LDERF}_{\Orb_X} N$ is supported only at $s$ too, so the projection formula implies that 
\begin{equation}
	P_s 
\tensor^{\LDERF}_{\Orb_X} H_N \simeq H_{P_s \tensor^{\LDERF}_{\Orb_X} N} \simeq 0 \label{eq:ps},
\end{equation} 
where $H_N$ is the cone of $\eta_N$ (see \eqref{LEQ:key-projection-ff:2}). The proofs of Theorem \ref{MT:A} and \ref{T:lefschetz-dim1} now diverge. 
\begin{proof}[Proof of Theorem \ref{MT:A}]
First, observe that $\LDERF c^*$ restricts to a $t$-exact functor on 
$\DCAT^-_{\COH}(X)$. Indeed, if $M \in \DCAT^-_{\COH}(X)$, then
  $(\LDERF c^*M)_t \simeq M_{s} \otimes^{\LDERF}_{\Orb_{X,s}}
  \Orb_{\mathcal{X},t}$. In particular, \itemref{GTI:A:flatunr_along_S} implies that if $\trunc{<n}M \simeq 0$ for some $n$, then
  \[
  (\trunc{<n}\LDERF c^*M)_t \simeq \trunc{<n}(M_{s} \otimes^{\LDERF}_{\Orb_{X,s}} \Orb_{\mathcal{X},t})\simeq (\trunc{<n}M_{s}) \tensor^{\LDERF}_{\Orb_{X,s}} \Orb_{\mathcal{X},t} \simeq 0.
  \]	 
	Condition \itemref{GTI:A:support} gives
  $\trunc{<n}(\LDERF c^*M) \simeq 0$ as claimed.
  
  Condition \itemref{GTI:A:bd} now allows us to apply a deep
  finiteness result, which implies that the restriction of
  $\RDERF c_{\qcsubscript,*}$ to $\DCAT^b_{\COH}(\mathcal{X})$ factors
  through $\DCAT^b_{\COH}(X)$ (see Proposition \ref{P:adjoints} or \cite[Prop.~3.0.9]{BZNP_integral}). If
  $X$ is projective over a field $k$, this step also follows from
  \cite[Rem.~4.6]{MR2793026} (if $X$ is smooth, it is the well-known
  \cite[Thm.~A.1]{MR1996800}). In summary, our functors restrict to an
  adjoint pair
\[
\LDERF c^* \colon \DCAT_{\COH}^b(X) \leftrightarrows \DCAT_{\COH}^b(\mathcal{X}) \colon \RDERF c_{\qcsubscript,*}.
\]
Note that we now have $H_N \in \DCAT^b_{\COH}(X)$. Let $v$ be an 
integer such that $\trunc{>v}H_N = 0$; then $\kappa(s) 
\tensor_{\Orb_X} \COHO{v}(H_N)\simeq \COHO{v}(P_s \tensor^{\LDERF}_{\Orb_X} H_N) 
\simeq 0$, by \eqref{eq:ps}. Since this is true for all closed points $s\in |X|$ 
and $\COHO{v}(H_N) \in \COH(X)$, Nakayama's Lemma implies that $\trunc{>v-1}H_N \simeq 0$. It follows immediately that $H_N \simeq 0$. 
Hence, $\eta_N$ is an isomorphism for all $N\in 
\DCAT^b_{\COH}(X)$ and so $\LDERF c^*$ is fully faithful on $\DCAT^b_{\COH}(X)$. Further, we 
have the comparison result: if $N \in \COH(X)$ and $i\geq 0$, then
\begin{align*}
  \shfcoho^i(X,N) &\simeq \Hom_{\Orb_X}(\Orb_X,N[i]) \simeq \Hom_{\Orb_X}(\Orb_X,\RDERF c_{\qcsubscript,*}c^*N[i]) \\
  &\simeq \Hom_{\Orb_{\mathcal{X}}}(c^*\Orb_X,c^*N[i]) \simeq \shfcoho^i(\mathcal{X},c^*N).
\end{align*}
For the essential surjectivity of $\LDERF c^*$, we proceed similarly as 
before: let $\mathcal{M} \in \DCAT^b_{\COH}(\mathcal{X})$; then we must prove 
that $\epsilon_{\mathcal{M}} \colon \LDERF c^*\RDERF c_{\qcsubscript,*}
\mathcal{M} \to \mathcal{M}$ is an isomorphism. Let $E_{\mathcal{M}}$ be 
the cone of $\epsilon_{\mathcal{M}}$; then $E_{\mathcal{M}} \in \DCAT^b_{\COH}
(\mathcal{X})$. The projection formula and \eqref{eq:qcs} give
	$\LDERF c^*P_s \tensor_{\Orb_{\mathcal{X}}} E_{\mathcal{M}} \simeq E_{\LDERF 
	c^*P_s \tensor_{\Orb_{\mathcal{X}}} \mathcal{M}} \simeq 0$. 	
Condition \itemref{GTI:A:support} now shows that $E_{\mathcal{M}} \simeq 0$.
\end{proof} 
\begin{remark}\label{R:necessity}
  Let $c\colon \mathcal{X} \to X$ be a morphism of
  locally ringed $G$-spaces  $\Orb_{\mathcal{X}}$ is coherent and such that
  $\LDERF c^* \colon \DCAT^b_{\COH}(X) \to
  \DCAT^b_{\COH}(\mathcal{X})$ is a $t$-exact equivalence. If there is a set of closed points $S$ of $\mathcal{X}$ such that
  \begin{enumerate}
  \item[(a)] if $s\in S$, then $\Orb_{\mathcal{X},s}$ is noetherian; 
  \item[(b)] if $s\in S$, then $\kappa(s) \in \COH(\mathcal{X})$; and
  \item[(c)] if $\mathcal{F} \in \COH(\mathcal{F})$ and $\mathcal{F}_s =0$
    for all $s\in S$, then $\mathcal{F} = 0$.
  \end{enumerate}
   Then a short argument gives $S=c^{-1}(X_{\mathrm{cl}})$ and the remaining conditions of
  Theorem \ref{MT:A} are met. 
\end{remark}

\begin{proof}[Proof of Theorem \ref{T:lefschetz-dim1}]
  It suffices to prove that if $F$ is
  a vector bundle on $X$, then $\trunc{\leq 0}H_F \simeq 0$. Indeed, there is an exact sequence:
  \[
\xymatrix{\shfcoho^{-1}(X,H_F) \ar[r] & \shfcoho^0(X,F) \ar[r] & \shfcoho^0(X,\RDERF c_{\qcsubscript,*} c^*F) \ar[r] & \shfcoho^0(X,H_F)}
  \]
  If $\trunc{\leq 0}H_F \simeq 0$, then the terms at the ends vanish
  and by adjunction
  $\shfcoho^0(X,\RDERF c_{\qcsubscript,*}c^*F) \simeq
  \shfcoho^0(\mathcal{X},c^*F)$.
  
  Now since $F$ is a vector bundle, the projection formula  \eqref{eq:projection_special} implies that 
   $H_{\Orb_X} \tensor^{\LDERF}_{\Orb_X} F \simeq H_F$. Set
  $H=H_{\Orb_X}$; hence, it suffices to prove that
  $\trunc{\leq 0}H \simeq 0$. By \eqref{eq:ps},
  $P_s \tensor^{\LDERF}_{\Orb_X} H \simeq 0$. Now let
  $j\colon U \hookrightarrow X$ be the resulting open immersion that is complementary to $\{s\}$; then
  localization theory (e.g., \cite[Ex.~1.4]{telescope-stacks}) now
  implies that $H\simeq \RDERF j_*\LDERF j^*H$. But \itemref{TI:lefschetz-cartier:qaff} implies that $U$ is quasi-affine, so $\trunc{\leq 0}H\simeq 0$ if and
  only if $\trunc{\leq 0}\RDERF \Gamma(X,H) \simeq 0$. We now have the
  long exact sequence:
  \[
    0 \to \shfcoho^{-1}(X,H) \to \shfcoho^0(X,\Orb_X) \to
    \shfcoho^0(\mathcal{X},\Orb_X) \to \shfcoho^0(X,H) \to
    \shfcoho^1(X,\Orb_X) \to
    \shfcoho^1(\mathcal{X},\Orb_{\mathcal{X}}).
  \]
  Certainly, $\trunc{<-1}H \simeq 0$, so we have
  $\shfcoho^{-1}(X,H) \simeq \shfcoho^0(X,\COHO{-1}(H))$. But
  $\shfcoho^0(X,\Orb_X) \simeq
  \shfcoho^0(\mathcal{X},\Orb_{\mathcal{X}})$, so
  $\shfcoho^0(X,\COHO{-1}(H)) = 0$ and $\COHO{-1}(H) \simeq 0$. Hence,
  $\shfcoho^{0}(X,H) \simeq \shfcoho^0(X,\COHO{0}(H))$. The injection
  $\shfcoho^1(X,\Orb_X) \to
  \shfcoho^1(\mathcal{X},\Orb_{\mathcal{X}})$ allows us to conclude
  that $\COHO{0}(H) \simeq 0$ too. The result follows.
\end{proof}
\begin{remark}
  A support theory for ``big'' objects in $\DCAT(\mathcal{X})$, or
  some suitable subcategory, would aid in establishing the
  essential surjectivity of $c^*$ in Theorem \ref{T:lefschetz-dim1}.
\end{remark}
\begin{lemma}\label{L:skyscraper-Gringed}
  Let $\mathcal{Y}$ be a $G$-space. Let $B$ be a ring. Let
  $i\colon \{y\} \subseteq \mathcal{Y}$ be the inclusion of a
  point. If $y$ belongs to an admissible open and $\mathcal{Y}-\{y\}$
  is covered by admissible opens, then
  \[
    i^{-1} \colon \MOD(\mathcal{Y},i_*B) \leftrightarrows \MOD(B)
    \colon i_*
  \]
  is an exact equivalence of abelian categories.
\end{lemma}
\begin{proof}
  Let $\mathcal{F}$ be an $i_*B$-module. Let $V \subseteq \mathcal{Y}$
  be admissible. If $y\notin V$, then $(i_*B)(V) = 0$. But
  $\mathcal{F}(V)$ is a $(i_*B)(V)$-module, so $\mathcal{F}(V) =
  0$. Let $U \subseteq V$ be another admissible open and assume that
  $y\in U$. Let $\{W_i \subseteq \mathcal{Y}-\{y\}\}_{i\in I}$ be an
  admissible cover. The sheaf condition gives an exact sequence:
  \[
    \xymatrix@C-0.7pc{ 0 \ar[r] & \mathcal{F}(V) \ar[r] & \mathcal{F}(U)
      \times \prod_{i\in I} \mathcal{F}(V \cap W_i) \ar@<+0.5ex>[r]
      \ar@<-0.5ex>[r] & \left(\prod_{i\in I} \mathcal{F}(U \cap V \cap
        W_i)\right) \times \left(\prod_{i,j\in I} \mathcal{F}(V \cap
        W_i \cap W_j)\right).}
  \]
  Since $y\notin V \cap W_i$ for all $i\in I$, it follows that the
  sequence above collapses to the restriction morphism
  $\mathcal{F}(V) \to \mathcal{F}(U)$ being an isomorphism.

  Since $i^{-1}\mathcal{F} = \mathcal{F}_y$, it follows immediately
  that the adjunction $\mathcal{F} \to i_*i^{-1}\mathcal{F}$ is an
  isomorphism of abelian sheaves.  Now let $M$ be a $B$-module. Since
  it is clear that $i^{-1}i_*M \to M$ is an isomorphism of
  $B$-modules, the result follows.
\end{proof}
\section{A finiteness result}
Our first task is to consider a variant of the finiteness
result \cite[Thm.~1.1]{MR1996800} for non-noetherian algebraic
spaces. This was recently established in the noetherian case in
\cite{BZNP_integral} and in general in \cite{stacks-project}, where it
is formulated in terms of pseudo-coherence \cite{MR0354655}. Since the
non-noetherian situation will be important to us, we will briefly recall these ideas. 

Let $B$ be a ring. A bounded complex of finitely generated and
projective $B$-modules is called \emph{strictly perfect}. Let
$m\in \Z$. A complex of $B$-modules $M$ is \emph{$m$-pseudo-coherent}
if there is a morphism $\phi\colon P \to M$ such that $P$ is strictly
perfect and the induced morphism
$\COHO{i}(\phi) \colon \COHO{i}(P) \to \COHO{i}(P)$ is an isomorphism
for $i>m$ and surjective for $i=m$. A complex of $B$-modules $M$ is
\emph{pseudo-coherent} if it is $m$-pseudo-coherent for all integers
$m\in \Z$; equivalently, it is quasi-isomorphic to a bounded above
complex of finitely generated and projective $B$-modules \cite[Tag
\spref{064T}]{stacks-project}. These conditions are all stable under
derived base change \cite[Tag \spref{0650}]{stacks-project} and are flat local
\cite[Tag \spref{068R}]{stacks-project}.

We let $\DPCOH{-}(B)$ denote
the full triangulated subcategory of the derived category of
$B$-modules, $\DCAT(B)$, with objects those complexes of $B$-modules
that are quasi-isomorphic to a pseudo-coherent complex of $B$-modules.
We let $\DPCOH{b}(B) \subseteq \DPCOH{-}(B)$ be the triangulated
subcategory of objects with bounded cohomological support. If
$B \to C$ is a ring homomorphism, then derived base change induces
$-\tensor^{\LDERF}_B C\colon \DPCOH{-}(B) \to \DPCOH{-}(C)$. If $C$
has finite tor-dimension over $B$ (e.g., $C$ is $B$-flat), then the
derived base change sends bounded pseudo-coherent complexes to bounded
pseudo-coherent complexes.

The above generalizes to ringed sites \cite[Tag
\spref{08FS}]{stacks-project}. Let $\mathcal{X}$ be a ringed site. A
complex of $\Orb_{\mathcal{X}}$-modules is \emph{strictly perfect} if
it is bounded and each term is a direct summand of a finitely
generated and free $\Orb_{\mathcal{X}}$-module \cite[Tag
\spref{08FL}]{stacks-project}. A complex of
$\Orb_{\mathcal{X}}$-modules is \emph{perfect} if locally on
$\mathcal{X}$ is strictly perfect.
\begin{example}\label{E:perfect_support}
  Let $X$ be an algebraic space. If $i \colon D \subseteq X$ is a
  Cartier divisor, then $i_*\Orb_D \in \DQCOH(X)$ is perfect. More
  generally, if $i \colon Z \hookrightarrow X$ is a regular embedding
  (i.e., $i$ is locally the zero locus of a regular section of a
  vector bundle), then $i_*\Orb_Z \in \DQCOH(X)$ is perfect. Also, if
  $X$ is quasi-compact and quasi-separated and
  $j \colon U \subseteq X$ is a quasi-compact open immersion, then
  there is a perfect complex $P \in \DQCOH(X)$ whose cohomological
  support is precisely $X\setminus U$
  \cite[Thm.~A]{perfect_complexes_stacks}.
\end{example}

Let $m\in \Z$. A complex of
$\Orb_{\mathcal{X}}$-modules $\mathcal{M}$ is
\emph{$m$-pseudo-coherent} if locally on $\mathcal{X}$ there is a
morphism $\phi \colon \mathcal{P} \to \mathcal{M}$ such that
$\mathcal{P}$ is strictly perfect and the induced morphism
$\COHO{i}(\phi) \colon \COHO{i}(\mathcal{P}) \to
\COHO{i}(\mathcal{M})$ is an isomorphism for $i>m$ and surjective for
$i=m$. A complex of $\Orb_{\mathcal{X}}$-modules is
\emph{pseudo-coherent} if it is $m$-pseudo-coherent for every
$m\in \Z$.

Let $\DPCOH{-}(\mathcal{X})$ denote the full triangulated subcategory
of $\DCAT(\mathcal{X})$, the unbounded derived category of
$\Orb_{\mathcal{X}}$-modules, with objects those complexes that are
quasi-isomorphic to a bounded above pseudo-coherent complex. We let
$\DPCOH{b}(\mathcal{X}) \subseteq \DPCOH{-}(\mathcal{X})$ be the full
triangulated subcategory of objects with bounded cohomological
support. If $c \colon \mathcal{X} \to X$ is a morphism of ringed sites, then the restriction of $\LDERF c^* \colon \DCAT(X) \to \DCAT(\mathcal{X})$ to $\DPCOH{-}(X)$ factors through $\DPCOH{-}(\mathcal{X})$ \cite[Tag \spref{08H4}]{stacks-project}. Moreover, if $c$ has finite
tor-dimension (e.g., it is flat), then $\LDERF c^*$ preserves bounded complexes.
\begin{example}
  Perfect complexes are pseudo-coherent. In particular, vector bundles
  of finite rank are pseudo-coherent.
\end{example}
\begin{example}
  Let $\mathcal{X}$ be a ringed site with a coherent structure
  sheaf. For example, a locally noetherian algebraic space or an
  analytic space. Let $*\in \{-,b\}$. Then
  $\DPCOH{*}(\mathcal{X}) = \DCAT_{\COH}^*(\mathcal{X})$; that
  is, a complex $M \in \DCAT(\mathcal{X})$ is pseudo-coherent if and only if it
  is quasi-isomorphic to a bounded above complex of sheaves with
  coherent cohomology \cite[Cor.~I.3.5]{MR0354655}.
\end{example}
The following lemma improves upon those given in Example
\ref{E:perfect_support} in the coherent situation. To state this
lemma, we recall the following definition
\cite[\S2.1]{MR1812507}. Let $\mathcal{T}$ be a
triangulated category. A subcategory
$\mathcal{S} \subseteq \mathcal{T}$ is \emph{thick} (or \'epaisse) if
it is triangulated and is closed under direct summands.  If
$S \subseteq \mathcal{T}$ is a collection of objects, we let
$\thick{S} \subseteq \mathcal{T}$ denote the thick closure of $S$;
that is, it is the smallest thick triangulated subcategory of
$\mathcal{T}$ containing $S$.
\begin{lemma}\label{L:good-perfect}
  Let $X$ be a quasi-compact and quasi-separated algebraic space. Let
  $i\colon Z \hookrightarrow X$ be a finitely presented closed
  immersion. Let
  \[
    \DPCOH[,|Z|]{b}(X) = \ker(\LDERF j^* \colon
    \DPCOH{b}(X) \to \DPCOH{b}(U)),
  \]
  where $j \colon U \to X$ is the open immersion complementary to
  $i \colon Z \to X$. If $\Orb_X$ is coherent, then
  $\thick{\RDERF i_* \DPCOH{b}(Z)} = \DPCOH[,|Z|]{b}(X)$.
\end{lemma}
\begin{proof}
  Clearly, $\RDERF i_*\DPCOH{b}(Z) \subseteq
  \DPCOH[,|Z|]{b}(X)$. Since $\DPCOH[,|Z|]{b}(X)$ is a
  thick subcategory of $\DPCOH{b}(X)$, it follows that
  $\thick{\RDERF i_*\DPCOH{b}(Z)} \subseteq
  \DPCOH[,|Z|]{b}(X)$. For the reverse inclusion, by induction on the length of a complex, it is
  sufficient to prove that if
  $M \in \COH_{|Z|}(X)=\ker(j^*\colon \COH(X) \to \COH(U))$ then
  $M \in \thick{\RDERF i_*\DPCOH{b}(Z)}$. Let
  $I = \ker (\Orb_X \to i_*\Orb_Z)$. By
  \cite[Lem.~2.5(i)]{mayer-vietoris}, it follows that there exists an
  integer $n>0$ such that $I^{n+1}M = 0$. Hence, $M$ admits a finite
  filtration by $i_*\Orb_Z$-modules and so belongs to
  $\thick{\RDERF i_*\DPCOH{b}(Z)}$. 
\end{proof}
Let $A$ be a ring. A $B$-algebra $A$ is \emph{pseudo-coherent} if it
admits a surjection from a polynomial ring
$\phi \colon A[x_1,\dots,x_n] \twoheadrightarrow B$ such that $B$ is a
pseudo-coherent $A[x_1,\dots,x_n]$-module. Pseudo-coherence is stable
under flat base change on $A$ and is \'etale local on $B$. See
\cite[Tag \spref{067X}]{stacks-project} for more background
material. This definition generalizes readily to morphisms of
algebraic spaces \cite[Tag \spref{06BQ}]{stacks-project}. We now
recall some examples that will be important to us.
\begin{example}
  Let $A$ be a noetherian ring. If $X \to \spec A$ is a locally of
  finite type morphism of algebraic spaces, then it is pseudo-coherent
  \cite[Tag \spref{06BX}]{stacks-project}.
\end{example}
\begin{example}
  Let $A$ be a ring. If $X \to \spec A$ is a flat and locally of
  finite presentation morphism of algebraic spaces, then it is
  pseudo-coherent \cite[Tag \spref{06BV}]{stacks-project}.
\end{example}
\begin{example}\label{E:cohesive}
  Let $A$ be a universally cohesive ring. That is, every finitely
  presented $A$-algebra is a coherent ring. The standard example is an
  $a$-adically complete valuation ring; for example, $A=\Orb_{\C_p}$, the ring of integers in the $p$-adically completed algebraic closure of $\Q_p$, $\C_p$. If $X \to \spec A$ is a
  locally of finite presentation morphism of algebraic spaces, then it
  is pseudo-coherent. This is the setting for Fujiwara--Kato's
  formalism of rigid geometry \cite{fujiwara-kato-I}.
\end{example}
The main result of this section is the following small refinement of
\cite[Tag~\spref{0CTT}]{stacks-project}. 
\begin{theorem}\label{T:finiteness}
  Let $A$ be a ring. Let $X \to \spec A$ be a quasi-compact,
  separated, and pseudo-coherent morphism of algebraic spaces. Let
  $M \in \DQCOH(X)$. If $\RHom_{\Orb_X}(P,M) \in \DCAT(A)$ is pseudo-coherent
  (pseudo-coherent and bounded) for all perfect complexes $P$, then
  $M$ is pseudo-coherent (pseudo-coherent and bounded).
\end{theorem}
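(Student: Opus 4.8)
The plan is to reduce the theorem to a statement over a ring --- by tilting along a compact generator --- and then to prove that statement by an explicit resolution; the parenthetical ``bounded'' assertions are carried along in parallel.

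\textbf{The affine case.} Here $X=\spec B$, so $\Orb_X=B$ is a perfect complex and $\RHom_{\Orb_X}(\Orb_X,M)$ is just $M$ regarded in $\DCAT(A)$ by restriction of scalars; taking $P=\Orb_X$ in the hypothesis thus shows $M$ is pseudo-coherent over $A$ (and bounded, in the bounded variant). As $X\to\spec A$ is pseudo-coherent, fix a surjection $A[x_1,\dots,x_n]\twoheadrightarrow B$ presenting $B$ as a pseudo-coherent module over $R:=A[x_1,\dots,x_n]$. First, restricting scalars along $A\hookrightarrow R$ and resolving the $R$-module $M$ by the two-term Koszul complexes attached to $x_1,\dots,x_n$ --- a finite iterated cone formed from copies of the (flat, hence underived) base change $M\otimes_A R$ --- shows $M$ is pseudo-coherent over $R$, since pseudo-coherence is stable under flat base change and under cones. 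Second, one builds a bounded-above strictly perfect $B$-complex mapping quasi-isomorphically to $M$ by downward induction: given a strictly perfect $B$-complex $F$ and a map $F\to M$ that is an isomorphism on $\COHO{i}$ for $i>m$ and surjective on $\COHO{m}$, the cone $C$ has vanishing cohomology in degrees $\ge m$ and is pseudo-coherent over $R$ (as is $F$, since a strictly perfect $B$-complex is built from $B$ --- pseudo-coherent over $R$ --- by finitely many shifts, cones, and direct summands), so $\COHO{m-1}(C)$ --- the top cohomology of a pseudo-coherent $R$-complex --- is finite over $R$, hence over $B$, and choosing generators extends $F$ one more step. The resulting complex exhibits $M$ as pseudo-coherent on $X$, the induction terminating --- whence boundedness --- in the bounded variant.

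\textbf{Reduction to the affine case.} One cannot simply pass to an affine open cover of $X$: the hypothesis is not local, since for $X$ proper over $A$ and $M=\Orb_X$ it holds while its restriction to a non-proper open already fails. Instead, since $X$ is quasi-compact and separated, $\DQCOH(X)$ has a perfect compact generator $G$; set $E=\RDERF\End_{\Orb_X}(G)$, so that $T:=\RHom_{\Orb_X}(G,-)$ is an equivalence $\DQCOH(X)\simeq\DCAT(E)$ carrying $\PERF(X)$ onto $\PERF(E)$ and therefore --- as one checks from the characterization of $m$-pseudo-coherence by approximation by perfect complexes, up to the bounded amplitude of $G$ --- carrying pseudo-coherent (resp.\ bounded pseudo-coherent) objects to pseudo-coherent (resp.\ bounded pseudo-coherent) objects of $\DCAT(E)$. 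Taking $P=G$ in the hypothesis, $T(M)=\RHom_{\Orb_X}(G,M)$ is pseudo-coherent over $A$; so it suffices to show $T(M)$ is pseudo-coherent over $E$, which follows verbatim from the affine case applied to $E$ in place of $B$ --- provided one knows that $E$ is a pseudo-coherent $A$-algebra, i.e.\ that there is a map from a finitely generated free $A$-algebra $A\langle t_1,\dots,t_n\rangle$ over which $E$ is pseudo-coherent.

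\textbf{The main obstacle.} This relative finiteness of $E$ is the crux, and it is where the hypothesis that $X\to\spec A$ is pseudo-coherent, quasi-compact and separated is genuinely used (for $X=\spec B$, $G=\Orb_X$ it is literally the assumption that $B$ is a pseudo-coherent $A$-algebra). Establishing it means carrying out the dévissage on $X$ rather than on $M$: one filters $\SRHom_{\Orb_X}(G,G)$ --- and, more generally, produces enough perfect complexes on $X$, including ones with prescribed support in the spirit of Lemma~\ref{L:good-perfect} --- adapted to a finite affine cover $X=\bigcup U_i$, so as to assemble $E=\RDERF\Gamma(X,\SRHom_{\Orb_X}(G,G))$ from the manifestly pseudo-coherent $A$-algebras attached to the $U_{i_0\cdots i_p}$ by a finite limit, and concludes that $E$ is again a pseudo-coherent $A$-algebra. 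Once $E$ is under control, the tilting equivalence and the resolution argument above are formal.
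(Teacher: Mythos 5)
Your affine case is sound and in fact reproduces the relevant Stacks Project lemmas: the Koszul resolution of an $R=A[x_1,\dots,x_n]$-module by flat base changes from $A$ shows that pseudo-coherence over $A$ implies pseudo-coherence over $R$, and the downward induction using the top cohomology of the cone is exactly the standard bootstrap from relative to absolute pseudo-coherence. But the reduction to that affine case is where the proposal breaks down, and where it departs decisively from the paper. The paper's proof does not do dévissage at all; it simply observes that $\RHom_{\Orb_X}(P^\vee,M)\simeq\RDERF\Gamma(X,P\otimes^{\LDERF}_{\Orb_X}M)$ and that pseudo-coherent morphisms are locally of finite presentation, so the hypothesis of the Stacks Project's Tag \spref{0CTT} (which is proved there by a \v{C}ech/dévissage argument on $X$) is met, giving relative pseudo-coherence of $M$; Tag \spref{0DHQ} then converts this to absolute pseudo-coherence, and the bounded statement is cited from \cite{BZNP_integral}.

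The tilting reduction has a genuine gap. First, $E=\RDERF\End_{\Orb_X}(G)$ is in general a non-connective, non-commutative dga --- a perfect compact generator of $\DQCOH(X)$ for $X$ proper typically has nonvanishing $\Ext^i(G,G)$ in both positive and negative degrees --- and the notion of pseudo-coherence you invoke is a $t$-structure notion. The equivalence $T=\RHom_{\Orb_X}(G,-)$ does not carry the standard $t$-structure on $\DQCOH(X)$ to the standard $t$-structure on $\DCAT(E)$ unless $G$ is a genuine tilting object, so ``pseudo-coherent over $E$'' neither has its usual meaning nor transfers along $T$ in the way your sketch assumes; the phrase ``up to the bounded amplitude of $G$'' is doing far more work than it can bear. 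Second, and more fundamentally, the claim that $E$ is a ``pseudo-coherent $A$-algebra'' is exactly an instance of the theorem you are trying to prove (applied to $M=\SRHom_{\Orb_X}(G,G)$), so establishing it requires the very dévissage on $X$ that the tilting step was supposed to let you avoid. You acknowledge this (``carrying out the dévissage on $X$ rather than on $M$'') but the remaining sentence --- filtering $\SRHom(G,G)$ adapted to an affine cover and assembling $E$ ``by a finite limit'' --- is a gesture, not an argument: pseudo-coherence is not closed under finite homotopy limits, and the non-flat, non-noetherian generality makes the assembly genuinely delicate. In short, the reduction to the affine case is circular as written, and without it the proposal proves only the (correct, but already known) affine statement.
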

\begin{proof}
  It is sufficient to prove that
  $\RDERF \Gamma(X,E \tensor^{\LDERF}_{\Orb_X} M) \in \DCAT(A)$ is
  pseudo-coherent for all pseudo-coherent $E$ on $X$. Indeed,
  pseudo-coherent morphisms are locally finitely presented, so $M$ is
  pseudo-coherent relative to $A$ \cite[Tag
  \spref{0CTT}]{stacks-project}. Since $X \to \spec A$ is
  pseudo-coherent, $M$ is pseudo-coherent on $X$ \cite[Tag
  \spref{0DHQ}]{stacks-project}. The boundedness result is
  \cite[Lem.~3.0.14]{BZNP_integral}---also see \cite[Tag
  \spref{0GFE}]{stacks-project}.

  We begin by observing that if $P \in \DQCOH(X)$ is perfect, then
  $\RHom_{\Orb_X}(P^\vee,M) \simeq \RDERF
  \Gamma(X,P\tensor^{\LDERF}_{\Orb_X}M) \in \DCAT(A)$, which is
  pseudo-coherent by assumption. Now there exists an integer $n$ such that
  $\shfcoho^r(X,N) = 0$ for all $r>n$ and $N \in \QCOH(X)$  \cite[Tag
  \spref{073G}]{stacks-project}. By
  \cite[Rem.~2.1.11]{MR2434692},
  $\trunc{\geq m}\RDERF \Gamma(X,G) \simeq \trunc{\geq
    m}\RDERF\Gamma(X,\trunc{\geq l}G)$ for all $m\in \Z$, $l\leq m-n$, and
  $G\in \DQCOH(X)$. Let $E\in \DQCOH(X)$ be pseudo-coherent and fix
  $m\in \Z$. Let $j\in \Z$ be such that $\trunc{>j}M \simeq 0$ and
  $\trunc{>j}E \simeq 0$. Choose a perfect complex
  $P \in \DQCOH(X)$ and morphism $\phi \colon P \to E$ such that
  $\trunc{\geq a}\mathrm{cone}(\phi) \simeq 0$, where $a=m-n-j$ \cite[Tag
  \spref{08HP}]{stacks-project}. Then
  \begin{align*}
    \trunc{\geq m}\RDERF \Gamma(X, E\tensor^{\LDERF}_{\Orb_X} M) &\simeq \trunc{\geq m}\RDERF\Gamma (X,\trunc{\geq m-n}(E \tensor^{\LDERF}_{\Orb_X}M))\\
                                                                 &\simeq \trunc{\geq m}\RDERF\Gamma (X,\trunc{\geq m-n}((\trunc{\geq m-n-j}E) \tensor^{\LDERF}_{\Orb_X}M))\\
                                                                 &\simeq \trunc{\geq m}\RDERF \Gamma(X, \trunc{\geq m-n}((\trunc{\geq m-n-j}P) \tensor^{\LDERF}_{\Orb_X}M)\\
                                                                 &\simeq \trunc{\geq m}\RDERF\Gamma (X,\trunc{\geq m-n}(P \tensor^{\LDERF}_{\Orb_X}M))\\
    &\simeq \trunc{\geq m}\RDERF \Gamma(X,P\tensor^{\LDERF}_{\Orb_X} M).
  \end{align*}
  We have already seen that
  $\RDERF \Gamma(X,P\tensor^{\LDERF}_{\Orb_X} M)\in \DCAT(A)$ is
  pseudo-coherent, and the claim follows. 
\end{proof}
\begin{remark}
  Theorem \ref{T:finiteness} has a converse if $X \to \spec A$ is
  proper. If $M$ is pseudo-coherent (resp.~pseudo-coherent and
  bounded) and $P$ is perfect, then
  $\RHom_{\Orb_X}(P,M) \simeq \RDERF \Gamma(X,P^\vee
  \tensor^{\LDERF}_{\Orb_X} M)$. Replacing $M$ by
  $P^\vee \tensor^{\LDERF}_{\Orb_X} M$, it suffices to prove that
  $\RDERF \Gamma(X,-)$ sends pseudo-coherent complexes to
  pseudo-coherent complexes. If $A$ is noetherian, this is just the
  usual coherence theorem for algebraic spaces
  \cite[Thm.~IV.4.1]{MR0302647}. If $X$ is a scheme and $A$ is not
  necessarily noetherian, this is Kiehl's Finiteness Theorem
  \cite[Thm.~2.9]{MR0382280}. If $X \to \spec A$ is flat, this is in the Stacks
  Project \cite[Tag \spref{0CSC}]{stacks-project}. If $A$ is
  universally cohesive, then this is due to Fujiwara--Kato
  \cite[Thm.~I.8.1.2]{fujiwara-kato-I}. Using derived algebraic
  geometry, the argument given in the Stacks Project readily extends
  to the general (i.e., non-flat) situation; that is, a version of
  Kiehl's finiteness theorem for algebraic spaces. This is done by
  Lurie in \cite{lurie_sag}.
\end{remark}
As noted in \cite[Rem.~3.0.6]{BZNP_integral}, it is Theorem
\ref{T:finiteness} that fails miserably for algebraic stacks with
infinite stabilizers. In future work, we will describe a variant of
Theorem \ref{T:finiteness} for a large class of algebraic stacks with infinite stabilizers that is sufficient to establish integral transform and GAGA results.

We conclude this section with a simple corollary of Theorem
\ref{T:finiteness}. Variants of this are well-known (see
\cite[Ex.~4.3.9]{MR2490557} and \cite{MO_coh_proper} in the finite
type noetherian, but non-separated situation). 
\begin{corollary}\label{C:bounded_coho_gives_proper}
  Let $A$ be a universally cohesive ring (e.g., noetherian). Let
  $X \to \spec A$ be a quasi-compact and separated morphism of
  algebraic spaces. If
  $\RDERF \Gamma(X,-)$ sends $\DPCOH{b}(X)$ to
  $\DPCOH{b}(A)$, then $X \to \spec A$ is proper and of finite
  presentation.
\end{corollary}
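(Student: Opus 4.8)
The goal is to deduce properness and finite presentation of $X \to \spec A$ from the assumption that $\RDERF\Gamma(X,-)$ preserves bounded pseudo-coherence. Since $A$ is universally cohesive, every finitely presented $A$-algebra is coherent, so $\DPCOH{b}(A) = \DCAT^b_{\COH}(A)$ and coherence is available for bookkeeping. The strategy is to split the statement into its two assertions and handle finite presentation first, then properness.

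\textbf{Finite presentation.} First I would observe that the hypothesis, applied to $P = \Orb_X \in \DPCOH{b}(X)$, gives that $\RDERF\Gamma(X,\Orb_X)$ is a pseudo-coherent (hence, by universal cohesion, coherent) bounded complex of $A$-modules; in particular $\shfcoho^0(X,\Orb_X) = A' := \Gamma(X,\Orb_X)$ is a finitely presented $A$-algebra. More is true: for every perfect complex $P$ on $X$ we have $\RHom_{\Orb_X}(P^\vee, \Orb_X) \simeq \RDERF\Gamma(X, P)$, so the hypothesis says exactly that $\RHom_{\Orb_X}(P,\Orb_X)$ lies in $\DPCOH{b}(A)$ for all perfect $P$. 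Applying Theorem~\ref{T:finiteness} with $M = \Orb_X$ (which is in $\DQCOH(X)$, and $X \to \spec A$ is quasi-compact, separated — but a priori only I need it pseudo-coherent to invoke the theorem), one runs into the issue that Theorem~\ref{T:finiteness} already presupposes pseudo-coherence of the morphism. So instead the route should be: by Noetherian-type approximation (Temkin/Rydh, or \cite{stacks-project}), write $X \to \spec A$ as a limit of finitely presented separated algebraic spaces $X_\lambda \to \spec A_\lambda$ over finitely generated $\Z$-subalgebras $A_\lambda \subseteq A$; each $X_\lambda$ is then of finite type over a noetherian ring, hence pseudo-coherent, and one checks that the bounded-pseudo-coherence hypothesis descends to some $X_\lambda$. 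Then Theorem~\ref{T:finiteness} applies to $X_\lambda \to \spec A_\lambda$ — actually one wants to conclude finiteness of $\Gamma$, and the real input is that $A' = \Gamma(X,\Orb_X)$ is finitely presented over $A$ and $X \to \spec A'$ has the same cohomological finiteness, so we may replace $A$ by $A'$ and assume $\Gamma(X,\Orb_X) = A$.

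\textbf{Properness.} With $\Gamma(X,\Orb_X) = A$, I would reduce to showing $X \to \spec A$ is universally closed and of finite type (separatedness is assumed), then invoke that a separated, finite type, universally closed morphism of algebraic spaces with $A$ universally cohesive and $X \to \spec A$ pseudo-coherent is proper and finitely presented. The heart is universal closedness, and the natural tool is the valuative criterion combined with cohomological finiteness: if $X \to \spec A$ were not universally closed, after base change one could find a closed point in the image-closure not hit, and one produces a coherent sheaf (e.g. a structure sheaf of a suitable closed subspace, or $\Orb_X$ itself) whose pushforward fails to be finitely generated — contradicting the hypothesis. Concretely, I expect to use the characterization via \cite[Ex.~4.3.9]{MR2490557} or the argument in \cite{MO_coh_proper}: boundedness and finiteness of $\RDERF\Gamma$ on $\DPCOH{b}$ forces the image of $X$ in $\spec A$ to be closed (take $\Orb_X$, its $\shfcoho^0$ is a finitely generated $A$-algebra that is also integral-ish over $A$ once finiteness is in place) and the fibers to be proper. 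Quasi-compactness of $X$ plus finite presentation of $X_{\mathrm{red}} \to \spec A$ (via the limit argument above) then upgrades "universally closed + finite type + separated" to "proper + finitely presented."

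\textbf{Main obstacle.} The technical crux is establishing \emph{finite presentation} of the morphism cleanly: one must leverage the cohomological hypothesis to bound the morphism, and the only available engine is Theorem~\ref{T:finiteness}, which is circular unless one first passes to a noetherian approximation where pseudo-coherence of the morphism is automatic and then argues that the bounded-pseudo-coherence of cohomology is inherited at finite level. Making that descent argument precise — choosing the approximating system, checking that a single coherent sheaf witnessing non-finiteness would descend, and ruling out the possibility that finiteness only holds "in the limit" — is where the real work lies; the properness/valuative-criterion portion is comparatively standard once finite presentation and $\Gamma(X,\Orb_X) = A$ are secured.
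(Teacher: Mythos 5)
Your proposal correctly identifies the circularity problem (Theorem~\ref{T:finiteness} requires the morphism to already be pseudo-coherent, which you cannot assume for $X \to \spec A$), but the workaround you sketch is not the paper's and contains a genuine gap. You propose writing $X$ as a filtered limit of finitely presented $X_\lambda$ and "descending" the hypothesis that $\RDERF\Gamma(X,-)$ preserves bounded pseudo-coherence to a finite level. There is no mechanism for such a descent: the hypothesis is a statement about \emph{every} object of $\DPCOH{b}(X)$ at once, and there is no reason a single $X_\lambda$ inherits it; conversely, even if it did, the conclusion of Theorem~\ref{T:finiteness} would concern complexes on $X_\lambda$, not the structure of the projection $X \to X_\lambda$. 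Your properness step (valuative criterion plus an unnamed coherent sheaf witnessing failure) is likewise only gestured at. You flagged both of these honestly, but they are precisely where the argument must actually go through, and as written they do not.

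The paper sidesteps the circularity entirely with a different reduction. By absolute noetherian approximation there is an \emph{affine} morphism $a\colon X \to X_0$ with $X_0$ separated and finitely presented over $\spec A$; using Nagata compactification (plus a blow-up and approximation again) one arranges $X_0 \to \spec A$ to be proper and finitely presented. Now $X_0 \to \spec A$ \emph{is} pseudo-coherent (Example~\ref{E:cohesive}, since $A$ is universally cohesive), so Theorem~\ref{T:finiteness} can legitimately be applied to $X_0$ with $M = a_*\Orb_X \in \DQCOH(X_0)$. For perfect $P$ on $X_0$,
\[
  \RHom_{\Orb_{X_0}}(P,a_*\Orb_X) \simeq \RHom_{\Orb_X}(\LDERF a^*P,\Orb_X) \simeq \RDERF\Gamma(X,\LDERF a^*P^\vee),
\]
which lies in $\DCAT^b_{\COH}(A)$ by hypothesis. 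Theorem~\ref{T:finiteness} then yields $a_*\Orb_X \in \COH(X_0)$, i.e.\ $a$ is finite and finitely presented, and $X \to \spec A$ is proper and finitely presented by composition. The idea you are missing is to feed a quasi-coherent-but-not-yet-coherent pushforward into Theorem~\ref{T:finiteness} over a \emph{proper compactification}, rather than trying to apply the theorem (or a limiting version of it) to $X$ itself. This also dispenses with any separate valuative-criterion argument: properness is inherited from $X_0$ once $a$ is known to be finite.
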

\begin{proof}
  By absolute noetherian approximation \cite{rydh-2009}, there is an
  affine morphism $a\colon X\to X_0$, where $X_0$ is a separated and
  finitely presented algebraic space over $\spec A$. Using Nagata's
  compactification theorem for algebraic spaces \cite{MR2979821}, a
  blow-up, and absolute noetherian approximation again, we may further
  assume that $X_0 \to \spec A$ is proper and finitely
  presented. Since $A$ is universally cohesive, $\Orb_{X_0}$ is
  coherent. Now let $P \in \DQCOH(X_0)$ be a perfect complex; then
  \[
    \RHom_{\Orb_{X_0}}(P,a_*\Orb_{X}) = \RHom_{\Orb_X}(\LDERF
    a^*P,\Orb_X) = \RDERF \Gamma(X,\LDERF a^*P^{\vee}) \in \DCAT^b_{\COH}(A).
  \]
  Hence, Theorem \ref{T:finiteness} implies that
  $a_*\Orb_X \in \COH(X_0)$. That is, $a$ is finite and finitely
  presented. By composition, $X \to \spec A$ is proper and of finite
  presentation.
\end{proof}

\section{Adjoints}\label{S:adjoints}
Throughout we let $X$ be an algebraic space. Consider a morphism of
ringed topoi $c \colon \mathcal{X} \to X_{\et}$. There is an adjoint
pair on the level of unbounded derived categories
\[
  \LDERF c^* \colon \DCAT(X) \rightleftarrows \DCAT(\mathcal{X}) \colon \RDERF c_*. 
\]
The inclusion $\DQCOH(X) \subseteq \DCAT(X)$ is fully faithful and also admits a right adjoint, the quasi-coherator $\RDERF Q_X \colon \DCAT(X) \to \DQCOH(X)$. It follows immediately that
\begin{enumerate}
\item the restriction of $\LDERF c^*$ to $\DQCOH(X)$ is left adjoint to
  $\RDERF Q_X\RDERF c_*$; and 
\item if $M\in \DQCOH(X)$, then the natural map $M \to \RDERF Q_X(M)$ is a
  quasi-isomorphism.
\end{enumerate}
We will let
\[
  \LDERF c^* \colon \DQCOH(X) \rightleftarrows
  \DCAT(\mathcal{X}) \colon \RDERF c_{\qcsubscript,*}
\]
denote the resulting adjoint pair. Let $M\in \DQCOH(X)$ and $\mathcal{N} \in \DCAT(\mathcal{X})$. Let
\[
\eta_M \colon M \to \RDERF c_{\qcsubscript,*}\LDERF c^*M \quad \mbox{and} \quad \epsilon_{\mathcal{N}} \colon \LDERF c^*\RDERF c_{\qcsubscript,*}\mathcal{N} \to \mathcal{N}
\]
denote the morphisms resulting from the adjunctions. 

We now use Theorem \ref{T:finiteness} to show that
$\RDERF c_{\qcsubscript,*}$ frequently preserves pseudo-coherence.
\begin{proposition}\label{P:adjoints}
  Let $A$ be a ring. Let $X \to \spec A$ be a quasi-compact,
  separated, and pseudo-coherent morphism of algebraic spaces. Let
  $c\colon \mathcal{X} \to X_{\et}$ be a morphism of ringed topoi. Let
  $*\in \{b,-\}$. If $\RDERF \Gamma(\mathcal{X},-)$ sends
  $\DPCOH{*}(\mathcal{X})$ to $\DPCOH{*}(A)$, then the
  restriction of $\RDERF c_{\qcsubscript,*}$ to
  $\DPCOH{*}(\mathcal{X})$ factors through $\DPCOH{*}(X)$.
\end{proposition}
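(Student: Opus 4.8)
The plan is to reduce the statement to Theorem~\ref{T:finiteness} applied to the structure morphism $X \to \spec A$. The point is that $\RDERF c_{\qcsubscript,*}$ is characterized by an adjunction with $\LDERF c^*$ on $\DQCOH(X)$, so we should test the output complex $\RDERF c_{\qcsubscript,*}\mathcal{M}$ against perfect complexes on $X$ using $\RHom$ and check that the resulting complexes of $A$-modules are pseudo-coherent (resp.\ pseudo-coherent and bounded); Theorem~\ref{T:finiteness} then upgrades this to pseudo-coherence of $\RDERF c_{\qcsubscript,*}\mathcal{M}$ on $X$ itself, i.e.\ membership in $\DPCOH{*}(X)$.

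First I would fix $\mathcal{M} \in \DPCOH{*}(\mathcal{X})$ and a perfect complex $P$ on $X$, and unwind the $\RHom$ on $X$:
\[
  \RHom_{\Orb_X}(P, \RDERF c_{\qcsubscript,*}\mathcal{M}) \simeq \RHom_{\Orb_X}(P, \RDERF Q_X \RDERF c_*\mathcal{M}) \simeq \RHom_{\Orb_X}(P, \RDERF c_*\mathcal{M}),
\]
where the last step uses that $P \in \DQCOH(X)$ together with the defining adjunction of the quasi-coherator $\RDERF Q_X$. Then by the adjunction $(\LDERF c^*, \RDERF c_*)$ on the unbounded derived categories this is
\[
  \RHom_{\Orb_X}(P, \RDERF c_*\mathcal{M}) \simeq \RHom_{\Orb_{\mathcal{X}}}(\LDERF c^*P, \mathcal{M}).
\]
Now $\LDERF c^*P$ is again a perfect complex on $\mathcal{X}$ (pullback of a perfect complex along a morphism of ringed sites is perfect), so $(\LDERF c^*P)^\vee$ is perfect, and hence $(\LDERF c^*P)^\vee \tensor^{\LDERF}_{\Orb_{\mathcal{X}}} \mathcal{M}$ lies in $\DPCOH{*}(\mathcal{X})$ since $\DPCOH{-}$ (resp.\ $\DPCOH{b}$) is stable under derived tensor with a perfect complex. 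Finally,
\[
  \RHom_{\Orb_{\mathcal{X}}}(\LDERF c^*P, \mathcal{M}) \simeq \RDERF \Gamma\bigl(\mathcal{X}, (\LDERF c^*P)^\vee \tensor^{\LDERF}_{\Orb_{\mathcal{X}}} \mathcal{M}\bigr),
\]
so the hypothesis that $\RDERF \Gamma(\mathcal{X},-)$ sends $\DPCOH{*}(\mathcal{X})$ to $\DPCOH{*}(A)$ gives that $\RHom_{\Orb_X}(P, \RDERF c_{\qcsubscript,*}\mathcal{M}) \in \DCAT(A)$ is pseudo-coherent (resp.\ pseudo-coherent and bounded). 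Since this holds for every perfect $P$ on $X$, and $X \to \spec A$ is quasi-compact, separated, and pseudo-coherent, Theorem~\ref{T:finiteness} applies and shows $\RDERF c_{\qcsubscript,*}\mathcal{M}$ is pseudo-coherent (resp.\ pseudo-coherent and bounded) on $X$, i.e.\ lies in $\DPCOH{*}(X)$. As $\RDERF c_{\qcsubscript,*}\mathcal{M} \in \DQCOH(X)$ by construction, this is exactly the desired factorization.

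The main technical point to verify carefully is the identification $\RHom_{\Orb_X}(P, \RDERF c_{\qcsubscript,*}\mathcal{M}) \simeq \RHom_{\Orb_{\mathcal{X}}}(\LDERF c^*P, \mathcal{M})$: one needs that for a perfect complex $P$ on the algebraic space $X$ the comparison map $P \to \RDERF Q_X(P)$ is an isomorphism (which is item (2) of the discussion preceding the proposition applied to $P \in \DQCOH(X)$) so that $\RDERF c_{\qcsubscript,*}$ may be replaced by $\RDERF c_*$ for the purposes of this $\RHom$, and then that $\LDERF c^*$ preserves perfectness and dualizes correctly. These are standard, but since $\mathcal{X}$ is only a ringed site (not a priori with coherent structure sheaf) one should not invoke anything beyond perfectness and the projection/duality formula for perfect complexes; everything needed is formal once perfectness of $\LDERF c^*P$ is granted, which is \cite[Tag \spref{08H4}]{stacks-project} specialized to perfect complexes (a perfect complex is in particular pseudo-coherent and of finite tor-dimension, and both are preserved). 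The boundedness bookkeeping in the case $*=b$ is immediate from the corresponding stability statements for $\DPCOH{b}$ already recalled in the excerpt.
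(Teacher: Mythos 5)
Your proposal is correct and is essentially the same argument as the paper's: test $\RDERF c_{\qcsubscript,*}\mathcal{M}$ against perfect $P$ on $X$, move the $\RHom$ to $\mathcal{X}$ by adjunction, rewrite as $\RDERF\Gamma(\mathcal{X},(\LDERF c^*P)^\vee\tensor^{\LDERF}\mathcal{M})$, invoke the hypothesis, and conclude with Theorem~\ref{T:finiteness}. The one minor inaccuracy is in your closing remarks: the reduction from $\RDERF c_{\qcsubscript,*}$ to $\RDERF c_*$ in the $\RHom$ uses the quasi-coherator adjunction $\RHom_{\Orb_X}(P,\RDERF Q_X N)\simeq\RHom_{\Orb_X}(P,N)$ for $P\in\DQCOH(X)$ (i.e.\ item (1) of the preceding discussion), not the statement that $P\to\RDERF Q_X(P)$ is an isomorphism.
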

\begin{proof}
  Let $\mathcal{M} \in \DPCOH{*}(\mathcal{X})$ and let $P\in \DQCOH(X)$ be perfect. Then $\LDERF c^*P^\vee \in \DQCOH(\mathcal{X})$ is perfect, so $\mathcal{M} \tensor^{\LDERF}_{\Orb_{\mathcal{X}}} \LDERF c^*P \in \DPCOH{*}(\mathcal{X})$. Hence,
  \begin{align*}
    \RHom_{\Orb_X}(P,\RDERF c_{\qcsubscript,*}\mathcal{M}) &\simeq \RHom_{\Orb_X}(\LDERF c^*P,\mathcal{M})\simeq \RDERF \Gamma(\mathcal{X},\LDERF c^*P^\vee \tensor^{\LDERF}_{\Orb_{\mathcal{X}}} \mathcal{M}) \in \DPCOH{*}(A).
  \end{align*}
  By Theorem \ref{T:finiteness},
  $\RDERF c_{\qcsubscript,*}\mathcal{M} \in \DPCOH{*}(X)$.
\end{proof}
\begin{remark}
A variant of
Proposition \ref{P:adjoints} that is valid for finite type cohomological
functors for proper schemes over noetherian bases, which generalizes \cite[Thm.~1.1]{MR1996800}, appears in \cite{Neeman_Approx}.
\end{remark}
We return to our general discussion. The categories $\DQCOH(X)$ and $\DCAT(\mathcal{X})$ are symmetric
monoidal and the derived pullback $\LDERF c^*$ is strong
monoidal. This lets us apply the formalism in Appendix
\ref{A:projection-formula} to our situation. We record some
consequences here.
\begin{lemma}\label{L:projection_formula}
  If $M \in \DQCOH(X)$ and $\mathcal{N} \in \DCAT(\mathcal{X})$, then
 there is a natural projection morphism
  \[
    \pi_{M,\mathcal{N}} \colon  M \tensor^{\LDERF}_{\Orb_X} (\RDERF c_{\qcsubscript,*}\mathcal{N})
    \to \RDERF c_{\qcsubscript,*}(\LDERF c^* M
    \otimes_{\Orb_{\mathcal{X}}}^{\LDERF} \mathcal{N}).
  \]
  This is an isomorphism if $M$ is perfect or $\Orb_{\mathcal{X}}$
  is a compact object of $\DCAT(\mathcal{X})$.
\end{lemma}
\begin{proof}
  If $M$ is perfect, then it is a dualizable object of $\DQCOH(X)$
  \cite[Lem.~4.3]{perfect_complexes_stacks}. Hence, the projection
  morphism is an isomorphism in this case by Theorem
  \ref{AT:projection-formula}.  If $\Orb_{\mathcal{X}}$ is a compact
  object of $\DCAT(\mathcal{X})$, then
  $\LDERF c^* \colon \DQCOH(X) \to \DCAT(\mathcal{X})$ preserves
  compact objects \cite[Lem.~2.3(2)]{telescope-stacks}, so its right
  adjoint $\RDERF c_{\qcsubscript,*}$ preserves small coproducts
  \cite[Thm.~5.1]{MR1308405}. Hence, the full subcategory of
  $\DQCOH(X)$ consisting of those $M$ for which $\pi_{M,\mathcal{N}}$
  is an isomorphism is localizing and contains the perfect
  complexes. By Thomason's localization theorem
  \cite[Thm.~2.1.2]{MR1308405}, the result follows.
\end{proof}
\begin{remark}\label{R:compact-mod}
  The condition that $\Orb_{\mathcal{X}}$ is a compact object of
  $\DCAT(\mathcal{X})$ is subtle, but frequently
  satisfied. A useful criterion is \cite[Tag
  \spref{094D}]{stacks-project}, which shows that it is sufficient for
  cohomology of abelian sheaves on $\mathcal{X}$ to commute with filtered
  colimits and have finite cohomological dimension. It follows that
  $\Orb_{\mathcal{X}}$ is a compact object of $\DCAT(\mathcal{X})$
  whenever $\mathcal{X}$ is equivalent to the topos of 
  \begin{enumerate}
  \item a noetherian space of finite Krull
    dimension \cite[Thm.~3.6.5]{MR0102537}; or
  \item a spectral space of finite Krull dimension \cite[Thm.~4.5]{MR1179103}; or
  \item a compact Hausdorff space of finite cohomological dimension.
  \end{enumerate}
\end{remark}
\section{Equivalences}\label{S:equivalences}
In this section, the following setup will feature frequently. 
\begin{setup}\label{set:Zequiv}
  Let $i \colon Z \to X$ be a morphism of quasi-compact and
  quasi-separated algebraic spaces. Consider a $2$-commutative
  diagram of ringed topoi:
  \[
    \xymatrix{\mathcal{Z} \ar[r]^{c_Z} \ar[d]_{i'} &
      \ar[d]^{i} Z_{\et}\\ \mathcal{X} \ar[r]^c & X_{\et}. }
  \]
  We will also assume that
  \begin{enumerate}[label=\thetheorem.\arabic*]
  \item\label{seti:Zequiv:equiv}
    $\LDERF c_Z^* \colon \DPCOH{-}(Z) \to \DPCOH{-}(\mathcal{Z})$ is
    an equivalence of categories; and
  \item\label{seti:Zequiv:proj}
    $\pi_{N,\RDERF i'_*\mathcal{Q}} \colon {N}
    \otimes^{\LDERF}_{\Orb_{{X}}} \RDERF c_{\qcsubscript,*}
   \RDERF i'_*\mathcal{Q} \simeq \RDERF c_{\qcsubscript,} (\LDERF c^*{N}
    \otimes^{\LDERF}_{\Orb_{\mathcal{X}}}\RDERF i'_*\mathcal{Q})$
    for all ${N} \in \DPCOH{-}({X})$ and
    $\mathcal{Q} \in \DPCOH{-}(\mathcal{Z})$.
  \end{enumerate}
  Setup \ref{set:Zequiv} is said to be \emph{tor-independent} if the
  following condition holds:
  \begin{enumerate}[label=\thetheorem.3]
  \item \label{seti:Zequiv:tori} $\LDERF c^*\RDERF i_* Q \simeq \RDERF i'_* \LDERF c_Z^*Q$ for
    all $Q\in \DPCOH{-}(Z)$.
\end{enumerate}
\end{setup}
\begin{example}\label{E:Zequiv}
  If $c$ is the formal completion along a closed immersion
  $i\colon Z \hookrightarrow X$ or when $Z$ and $\mathcal{Z}$ are
  points with the same residue field, then \ref{seti:Zequiv:equiv}
  holds. Indeed, here we simply take $\mathcal{Z} = Z_{\et}$. Lemma
  \ref{L:Zequiv:projection} will show that \ref{seti:Zequiv:proj} also
  holds in this setting. Further, if $c$ and $i$ are tor-independent
  (e.g., $c$ is flat or $X$ is noetherian and $c$ is the formal
  completion along $i$), then Lemma \ref{L:Zequiv-tor-ind-qaff} implies that
  \ref{seti:Zequiv:tori} holds.
\end{example}
The next result is a sanity check for \ref{seti:Zequiv:equiv}. 
\begin{lemma}\label{L:Zequiv-restricts}
  Assume \ref{seti:Zequiv:equiv}. Then the restriction of
  $\RDERF c_{Z,\qcsubscript,*}$ to
  $\DPCOH{-}(\mathcal{Z})$ induces an equivalence:
  \[
    \LDERF c^*_Z \colon \DPCOH{-}(Z) \leftrightarrows
    \DPCOH{-}(\mathcal{Z}) \colon \RDERF
    c_{Z,\qcsubscript,*}.
  \]
  \end{lemma}
  \begin{proof}
    Let $P$, $Q \in \DPCOH{-}(Z)$. Then there are isomorphisms:
  \[
    \RHom_{\Orb_{Z}}(P,Q) \simeq
    \RHom_{\Orb_{\mathcal{Z}}}(\LDERF c_Z^*P,\LDERF
    c_Z^*Q) \simeq \RHom_{\Orb_{Z}}(P,\RDERF
    c_{Z,\qcsubscript,*} \LDERF c_Z^*Q).
  \]
  Let $H_Q$ be the cone of the adjunction morphism
  $\eta_{Z,Q} \colon Q \to \RDERF c_{Z,\qcsubscript,*}\LDERF c_Z^*Q$.
  Then $\RHom_{\Orb_{Z}}(P,H_Q) \simeq 0$ for all
  $P \in \DPCOH{-}(Z)$. But $\DPCOH{-}(Z_\lambda)$ contains the
  perfect complexes of $Z$, so $H_Q \simeq 0$
  \cite[Thm.~A]{perfect_complexes_stacks}. That is, $\eta_{Z,Q}$ is an
  isomorphism for all $Q\in \DPCOH{-}(Z)$. Now let
  $\mathcal{Q} \in \DPCOH{-}(\mathcal{Z})$. Then $Q\in \DPCOH{-}(Z)$
  and there is an isomorphism $\mathcal{Q} \simeq \LDERF c_Z^*Q$. By
  what we have proved so far, it follows that
  $\RDERF c_{Z,\qcsubscript,*}\mathcal{Q} \simeq \RDERF
  c_{Z,\qcsubscript,*}\LDERF c_Z^*Q \simeq Q$. That is,
  $\RDERF c_{Z,\qcsubscript,*}$ restricts to a functor from
  $\DPCOH{-}(\mathcal{Z})$ to $\DPCOH{-}(Z)$. It
  follows immediately from general nonsense that
  $\RDERF c_{Z,\qcsubscript,*}$ is right adjoint to
  $\LDERF c_Z^* \colon \DPCOH{-}(Z) \to
  \DPCOH{-}(\mathcal{Z})$ and we have the claimed adjoint
  equivalence.
\end{proof}
The following lemma shows that \ref{seti:Zequiv:proj} is a very mild
condition.
\begin{lemma}\label{L:Zequiv:projection}
  Assume \ref{seti:Zequiv:equiv}. If one of the following hold, then
  \ref{seti:Zequiv:proj} holds.
  \begin{enumerate}
  \item\label{LI:Zequiv:projection:nice} The projection morphism
    $\pi_{\mathcal{N},\mathcal{Q}}^{\LDERF i'^*,\RDERF i'_*}$ is an
    isomorphism for all $\mathcal{N} \in \DPCOH{-}(\mathcal{X})$ and
    $\mathcal{Q} \in \DPCOH{-}(\mathcal{Z})$.
  \item\label{LI:Zequiv:projection:pedantic} The projection morphism
    $\pi_{\LDERF c^*N,\mathcal{Q}}^{\LDERF i'^*,\RDERF i'_*}$ is an
    isomorphism for all $N \in \DPCOH{-}(X)$ and
    $\mathcal{Q} \in \DPCOH{-}(\mathcal{Z})$.
  \item\label{LI:Zequiv:projection:useful} There exists an integer $K$
    such that for each $\mathcal{U}$ of $\mathcal{X}$,
    $\trunc{>K}\RDERF i'_{\mathcal{U},*}\mathcal{Q} \simeq 0$ for all
    $\mathcal{Q} \in \DPCOH{\leq 0}(i'^{-1}\mathcal{U})$, where
    $i'_{\mathcal{U}} \colon i'^{-1}(\mathcal{U}) \to \mathcal{U}$ is
    the induced morphism.
  \end{enumerate}
\end{lemma}
\begin{proof}
  Certainly, we have
  \itemref{LI:Zequiv:projection:nice}$\Rightarrow$\itemref{LI:Zequiv:projection:pedantic}. We
  next show
  \itemref{LI:Zequiv:projection:useful}$\Rightarrow$\itemref{LI:Zequiv:projection:nice}:
  let $\mathcal{N} \in \DPCOH{\leq a}(\mathcal{X})$ and
  $\mathcal{Q} \in \DPCOH{\leq b}(\mathcal{Z})$ for some integer
  $b$. Let $n\in \Z$; then it remains to prove that the projection morphism
  \[
    \pi_{\mathcal{N},\mathcal{Q}}^{\LDERF i'^*,\RDERF i'_*} \colon \mathcal{N} \otimes^{\LDERF}_{\Orb_{\mathcal{X}}} \RDERF i'_*\mathcal{Q} \to \RDERF i'_*(\LDERF i'^*\mathcal{N} \otimes^{\LDERF}_{\Orb_{\mathcal{Z}}} \mathcal{Q})
  \]
  is $n$-coconnected. This assertion is local on $\mathcal{X}$. Thus,
  by pseudo-coherence of $\mathcal{N}$, we may assume that there is an
  $(n-K-b)$-coconnected morphism
  $\phi \colon \mathcal{P} \to \mathcal{N}$, where $\mathcal{P}$ is
  perfect. Equivalently, if $\mathcal{E}$ sits in a distinguished
  triangle:
  \[
    \xymatrix{\mathcal{P} \ar[r]^\phi & \mathcal{N} \ar[r] & \mathcal{E} \ar[r] & \mathcal{P}[1]},
  \]
  then $\trunc{>n-K-b}\mathcal{E}\simeq 0$. Now $\mathcal{P}$ is
  perfect, so
  $\pi_{\mathcal{P},\mathcal{Q}}^{\LDERF i'^*,\RDERF i'_*}$ is an
  isomorphism (Theorem \ref{AT:projection-formula}). However, we
  certainly have $\trunc{>K+b}\RDERF i'_*\mathcal{Q} \simeq 0$, so
  $ \trunc{>n}(\mathcal{E} \otimes^{\LDERF}_{\Orb_{\mathcal{X}}}
  \RDERF i'_*\mathcal{Q}) \simeq 0$. Similarly,
  $\trunc{>n-K}(\LDERF i'^*\mathcal{E}
  \otimes^{\LDERF}_{\Orb_{\mathcal{Z}}} \mathcal{Q}) \simeq 0$ and so
  $\trunc{>n}\RDERF i'_*(\LDERF i'^*\mathcal{E}
  \otimes^{\LDERF}_{\Orb_{\mathcal{Z}}} \mathcal{Q}) \simeq 0$. It
  follows immediately that the projection morphism is $n$-coconnected.

  It remains to prove that \itemref{LI:Zequiv:projection:pedantic}
  implies \ref{seti:Zequiv:proj}. This is essentially just the
  functoriality properties of the projection formula together with
  Lemma \ref{L:Zequiv-restricts}. Specifically, we apply Lemma
  \ref{L:projection-composition} and Remark
  \ref{R:projection-natural-iso} with $\mathcal{C}=\DQCOH(X)$,
  $\mathcal{D} = \DCAT(\mathcal{X})$, $\mathcal{C}' = \DQCOH(Z)$, and
  $\mathcal{D}'=\DCAT(\mathcal{Z})$ with the natural functors and
  adjoints already described. In more details: there is a commutative diagram
  \[
    \xymatrix{ \ar[d] N \otimes^{\LDERF}_{\Orb_X} \RDERF
      c_{\qcsubscript,*}\RDERF i'_*\mathcal{Q} \ar[r] & \RDERF
      c_{\qcsubscript,*}(\LDERF c^*N
      \otimes^{\LDERF}_{\Orb_{\mathcal{X}}} \RDERF i'_*\mathcal{Q})
      \ar[d] \\ N \otimes^{\LDERF}_{\Orb_X} \RDERF i_* \RDERF
      c_{Z,\qcsubscript,*}\mathcal{Q} \ar[d]  &  \RDERF
      c_{\qcsubscript,*} \RDERF i'_*(\LDERF i'^*N
      \otimes^{\LDERF}_{\Orb_{\mathcal{Z}}} \mathcal{Q}) \ar[d] \\
      \RDERF i_*(\LDERF i^*N \otimes^{\LDERF}_{\Orb_Z} \RDERF
      c_{Z,\qcsubscript,*}\mathcal{Q}) \ar[r] & \RDERF i_*\RDERF
      c_{Z,\qcsubscript,*}(\LDERF i'^*N
      \otimes^{\LDERF}_{\Orb_{\mathcal{Z}}} \mathcal{Q}). }
  \]
  The map along the top is what we want to show is an isomorphism. The
  upper morphism on the right is an isomorphism by
  \itemref{LI:Zequiv:projection:pedantic}. The lower morphism on the
  right and the upper morphism on the left are isomorphisms by
  functoriality. The lower morphism on the left is an
  isomorphism by the usual projection formula for quasi-compact and
  quasi-separated morphisms of algebraic spaces
  \cite[Cor.~4.12]{perfect_complexes_stacks}. Finally, the bottom
  morphism is an isomorphism by Remark \ref{R:projection-equivalence}
  applied to the equivalence of Lemma \ref{L:Zequiv-restricts}.
\end{proof}
Tor-independence is more subtle to arrange, however.
\begin{lemma}\label{L:Zequiv-tor-ind-qaff}
  Assume \ref{seti:Zequiv:equiv}. If
  \begin{enumerate}
  \item $c$ and $i$ are tor-independent morphisms;
  \item $i$ factors as
    $Z \xrightarrow{\tilde{\imath}} \tilde{X} \xrightarrow{j} X$,
    where $\tilde{\imath}$ is affine and $j$ is \'etale; and
  \item
    $\DPCOH{-}(\mathcal{Z}) \simeq
    \DPCOH{-}(c^{-1}(\tilde{X}),\tilde{\imath}'_*\Orb_{\mathcal{Z}})$,
  \end{enumerate}
  then \ref{seti:Zequiv:tori} holds. 
\end{lemma}
\begin{proof}
  Let
  $\mathcal{Z} \xrightarrow{\tilde{\imath}'} c^{-1}(\tilde{X})
  \xrightarrow{j'} \mathcal{X}$ and
  $\tilde{c} \colon c^{-1}(\tilde{X}) \to \tilde{X}_{\et}$ be the induced
  morphisms.  We may assume that $j$ is a cover, so $j'$ is a
  cover. Let $Q \in \DPCOH{-}(Z)$; then we must prove that the base
  change morphism
  $\LDERF c^*\RDERF i_*Q \to \RDERF i'_* \LDERF c_Z^*Q$ is an
  isomorphism. Since $j'$ is covering, it suffices to prove that
  $\LDERF j'^*\LDERF c^*\RDERF i_*Q \to \LDERF j'^*\RDERF i'_* \LDERF
  c_Z^*Q$ is an isomorphism. Since the following diagram commutes:
  \[
    \xymatrix{\LDERF j'^*\LDERF c^* \RDERF i_*Q \ar[r] \ar[d] & \LDERF j'^* \RDERF i'_*\LDERF c_Z^*Q \ar[d]\\
      \ar[d] \LDERF \tilde{c}^* \LDERF j^*  \RDERF j_* \RDERF \tilde{\imath}_* Q & \LDERF j'^* \RDERF j'_* \RDERF \tilde{\imath}'_* \LDERF c_Z^* Q \ar[d]\\
      \LDERF \tilde{c}^* \RDERF \tilde{\imath}_*Q \ar[r] & \RDERF
      \tilde{\imath}'\LDERF c_Z^*Q,}
  \]
  where the vertical morphisms are all isomorphisms, and the bottom
  morphism is the base change map, we are reduced to the situation
  where $X=\tilde{X}$. The result now follows from Lemma
  \ref{L:tor-ind-topoi}.
\end{proof}

The following lemma collects
some of the technicalities of this section.
\begin{lemma}\label{L:Zequiv-locus}
Assume Setup \ref{set:Zequiv}. Let $N\in \DPCOH{-}(X)$ and define
\begin{align*}
      \mathcal{V}_{N} &= \{ \mathcal{Q} \in \DCAT(\mathcal{X}) \suchthat \mbox{$\pi_{N,\mathcal{Q}}$ is an isomorphism}\},\\
    \mathcal{T} &= \{ \mathcal{Q} \in \DCAT(\mathcal{X}) \suchthat \mbox{$\epsilon_{\mathcal{Q}}$ is an isomorphism}\}, \,\mbox{and} \\
    \mathcal{S} &= \{ P \in \DQCOH(X) \suchthat \mbox{$\eta_P$ is an isomorphism}\}.
  \end{align*}
  Then
  $\langle \RDERF i'_*\DPCOH{-}(\mathcal{Z})\rangle \subseteq
  \mathcal{V}_N$. If tor-independent, then
  $\langle \RDERF i'_*\DPCOH{-}(\mathcal{Z})\rangle \subset
  \mathcal{T}$ and
  $\langle \RDERF i_*\DPCOH{-}(Z)\rangle \subseteq \mathcal{S}$.
\end{lemma}
\begin{proof}
  We may view $\mathcal{V}_N$, $\mathcal{T}$, and $\mathcal{S}$ as
  full subcategories of $\DCAT(\mathcal{X})$, $\DCAT(\mathcal{X})$,
  and $\DQCOH(X)$, respectively. They are obviously triangulated and
  thick subcategories.  It remains to prove the following.
  \begin{enumerate}
  \item If $\mathcal{Q}_0 \in \DPCOH{-}(\mathcal{Z})$, then
    $\RDERF i'_*\mathcal{Q}_0 \in \mathcal{V}_N$.  This is \ref{seti:Zequiv:proj}.
  \item If tor-independent and
    $\mathcal{Q}_0 \in \DPCOH{-}(\mathcal{Z})$, then
    $\RDERF i'_*\mathcal{Q}_0 \in \mathcal{T}$. To see this: the diagram
    \[
      \xymatrix{\LDERF c^*\RDERF c_{\qcsubscript,*}\RDERF i'_*\mathcal{Q}_0 \ar[r]\ar[d]_{\epsilon_{\RDERF i'_*\mathcal{Q}_0}} & \LDERF c^*\RDERF i_*\RDERF c_{Z,\qcsubscript,*}\mathcal{Q}_0 \ar[d] \\ \RDERF i'_*\mathcal{Q}_0 & \ar[l]_-{\RDERF i'_*\epsilon_{\mathcal{Q}_0}} \RDERF i'_*\LDERF c_Z^*\RDERF c_{Z,\qcsubscript,*}\mathcal{Q}_0}
    \]
    commutes. The claim now follows from functoriality (the top morphism) Lemmas
    \ref{L:Zequiv-restricts} (the bottom morphism) and
    \ref{seti:Zequiv:tori} (the right morphism).
  \item If tor-independent and ${Q}_0 \in \DPCOH{-}({Z})$, then
    $\RDERF i_*{Q}_0 \in \mathcal{S}$: this similar to the previous step, so is omitted. \qedhere
  \end{enumerate}
\end{proof}

  We now introduce a key definition. We appreciate that it is
  difficult to parse. Such a definition appears necessary, however, to
  treat the lack of tor-independence that appears in the
  non-noetherian situation as well as the subtleness of the projection
  morphism. When tor-independence is available, Proposition
  \ref{P:Zequi-torind} provides a useful criterion for
  $Z$-equivalence.
\begin{definition}\label{D:Zequiv}
  Assume Setup \ref{set:Zequiv}. Let
  $\mathcal{M} \in \DCAT(\mathcal{X})$, $\mathcal{N}\in \DPCOH{-}(\mathcal{X})$. We say that $c$ is
  \begin{enumerate}[label=(\alph*), ref=\alph*, series=Zequiv]
  \item \label{Zequiv:faithful} \emph{faithful} along $\mathcal{M}$ at $\mathcal{N}$ if 
    \[
      \nu_{\mathcal{M},\mathcal{N}} \colon \RDERF
      c_{\qcsubscript,*}\mathcal{M} \otimes^{\LDERF}_{\Orb_X} \RDERF
      c_{\qcsubscript,*}\mathcal{N} \to \RDERF c_{\qcsubscript,*}(\mathcal{M}
      \otimes^{\LDERF}_{\Orb_{\mathcal{X}}} \mathcal{N})
    \]
    is an isomorphism; and
  \item \label{Zequiv:equiv} an \emph{equivalence} along $\mathcal{M}$ at $\mathcal{N}$ if \itemref{Zequiv:faithful} holds and 
    \[
      {\mathcal{M}} \tensor \epsilon_{\mathcal{N}} \colon
      \mathcal{M} \tensor^{\LDERF}_{\Orb_{\mathcal{X}}} \LDERF
      c^*\RDERF c_{\qcsubscript,*}\mathcal{N} \to \mathcal{M}
      \otimes^{\LDERF}_{\Orb_{\mathcal{X}}}\mathcal{N}
    \]
    is an isomorphism.
  \end{enumerate}
  If these hold for all $\mathcal{N}$, then we omit the ``at
  $\mathcal{N}$''. If $\mathcal{M}=i'_*\Orb_{\mathcal{Z}}$, then we
  will replace ``$\mathcal{M}$'' with ``$Z$''.
\end{definition}
The simplest method to produce the above is to use the following.
\begin{proposition}\label{P:Zequi-torind}
  Assume Setup \ref{set:Zequiv}. Let $M \in \DQCOH(X)$ and
  $\mathcal{N} \in \DPCOH{-}(\mathcal{X})$.
  \begin{enumerate}
  \item\label{PI:Zequi-torind:eta} If $\eta_M$ is an isomorphism, then $c$ is faithful along $\LDERF c^*M$ at $\mathcal{N}$ if and only if $\pi_{M,\mathcal{N}}$ is an isomorphism.
  \item\label{PI:Zequi-torind:ti} If tor-independent and
    $M \in \langle \RDERF i_*\DPCOH{-}(Z)\rangle$, then $c$ is an
    equivalence along $\mathcal{M}$ at $\mathcal{N}$ if and only if
    $\pi_{M,\mathcal{N}}$ is an isomorphism.
\end{enumerate}
\end{proposition}
\begin{proof}
  By definition of the projection morphism \eqref{eq:projection}, the
  following diagram commutes:
  \[
    \xymatrix@C+3pc{ M \tensor^{\LDERF}_{\Orb_X}(\RDERF
      c_{\qcsubscript,*}\mathcal{N}) \ar[r]^-{\eta_M \tensor {\RDERF c_{\qcsubscript,*}\mathcal{N}}} \ar[dr]_-{\pi_{M,\mathcal{N}}} & \RDERF
      c_{\qcsubscript,*}\LDERF c^*M \tensor^{\LDERF}_{\Orb_X} (\RDERF
      c_{\qcsubscript,*}\mathcal{N}) \ar[d]^{\nu_{\LDERF c^*M, \mathcal{N}}} \\ & \RDERF
      c_{\qcsubscript,*}(\LDERF c^*M
      \tensor^{\LDERF}_{\Orb_{\mathcal{X}}} \mathcal{N}).}
  \]
  This proves \itemref{PI:Zequi-torind:eta}. If $M \in \langle \RDERF i_*\DPCOH{-}(Z)\rangle$ and $c$ and $i$
  are tor-independent, then 
  $\eta_M$ is an
  isomorphism (Lemma \ref{L:Zequiv-locus}). By Lemma
  \ref{L:projections-equal}, the following diagram commutes:
  \[
    \xymatrix{\LDERF c^*(M \tensor^{\LDERF}_{\Orb_{{X}}} \RDERF
      c_{\qcsubscript,*}\mathcal{N}) \ar[r] \ar[d]_{\LDERF
        c^*\pi_{M,\mathcal{N}}} & \LDERF c^*M
      \tensor^{\LDERF}_{\Orb_{\mathcal{X}}} \LDERF c^*\RDERF
      c_{\qcsubscript,*}\mathcal{N} \ar[d]^{\LDERF c^*M \tensor
        \epsilon_{\mathcal{N}}} \\ \LDERF c^*\RDERF
      c_{\qcsubscript,*}(\LDERF c^*M
      \tensor^{\LDERF}_{\Orb_{\mathcal{X}}} \mathcal{N})
      \ar[r]^-{\epsilon_{\LDERF c^*M
          \tensor_{\Orb_{\mathcal{X}}}^{\LDERF} \mathcal{N}}} & \LDERF
      c^*M \tensor^{\LDERF}_{\Orb_{\mathcal{X}}} \mathcal{N}.}
  \]
  The top morphism is an isomorphism, as is the bottom (Lemmas
  \ref{L:Zequiv-locus} and \ref{L:projection-topoi-affine}). The
  stated equivalence follows.
\end{proof}
Many examples are provided by the following two results.
\begin{corollary}\label{C:Zequiv-examples-tor-ind}
  Assume tor-independent Setup \ref{set:Zequiv}. If
  \begin{enumerate}
  \item\label{LI:Zequiv-examples-tor-ind:cartier} $i$ is a Cartier divisor; or 
  \item\label{LI:Zequiv-examples-tor-ind:perfect} $i_*\Orb_Z$ is perfect; or 
  \item\label{LI:Zequiv-examples-tor-ind:compact} $\Orb_{\mathcal{X}}$
    is a compact object of $\DCAT(\mathcal{X})$ (see Remark \ref{R:compact-mod});
  \end{enumerate}
  then $c$ is an equivalence along $Z$.
\end{corollary}
\begin{proof}
  We use the criterion of Proposition \ref{P:Zequi-torind}. Case
  \itemref{LI:Zequiv-examples-tor-ind:cartier} is a special case of
  \itemref{LI:Zequiv-examples-tor-ind:perfect}. In cases
  \itemref{LI:Zequiv-examples-tor-ind:perfect} and
  \itemref{LI:Zequiv-examples-tor-ind:compact} the projection
  morphism is an isomorphism by Lemma \ref{L:projection_formula}. 
\end{proof}
\begin{corollary}\label{C:Zequiv-examples-tor-ind-coherent}
  Assume tor-independent Setup \ref{set:Zequiv} and
  $i$ is a closed immersion. If $\Orb_X$ is coherent, then there is a
  perfect complex $M \in \langle \RDERF i_*\DPCOH{-}(Z) \rangle$ with
  $\trunc{\geq 0}M \simeq \Orb_Z$ such that $c$ is an equivalence along
  $\LDERF c^*M$.
\end{corollary}
\begin{proof}
  By perfect approximation \cite[Tag \spref{08HP}]{stacks-project},
  there exists a perfect complex $M \in \DCAT^{\leq 0}_{\COH,|Z|}(X)$
  with $\trunc{\geq 0}M \simeq \Orb_Z$. By Lemma \ref{L:good-perfect},
  $M \in \thick{\RDERF i_{*}\DCAT^b_{\COH}(Z)}$, so
  $\LDERF c^*M \in \thick{\RDERF i'_*\DPCOH{-}(\mathcal{Z})}$ (Lemma
  \ref{L:tor-ind-topoi}). Now apply Lemma \ref{L:projection_formula}
  and Proposition \ref{P:Zequi-torind}.
\end{proof}

Lacking tor-independence, these notions can be quite subtle. We will give some interesting examples at the end of this section, however. In the meantime, we content ourselves with the following useful lemma.
\begin{lemma}\label{L:Zequiv<=Zfaithful}
  Assume Setup \ref{set:Zequiv}. Let
  $\mathcal{M}_0 \in \DPCOH{-}(\mathcal{Z})$,
  $\mathcal{N} \in \DPCOH{-}(\mathcal{X})$. If ${\RDERF c_{\qcsubscript,*}\mathcal{N} \in
    \DPCOH{-}({X})}$ 
and $c$ is faithful along $\RDERF i'_*\mathcal{M}_0$ at
$\mathcal{N}$, then $c$ is an equivalence along
$\RDERF i'_*\mathcal{M}_0$ at $\mathcal{N}$.
\end{lemma}
\begin{proof}
  We have the following commutative diagram:
  \[
    \xymatrix@C+3.5pc{\RDERF c_{\qcsubscript,*}( \LDERF c^*\RDERF
      c_{\qcsubscript,*}\mathcal{N}
      \tensor^{\LDERF}_{\Orb_{\mathcal{X}}} \RDERF i'_*\mathcal{M}_0)
      \ar[r]^-{\pi_{\RDERF c_{\qcsubscript,*}\mathcal{N},\RDERF
          i'_*\mathcal{M}_0}} \ar[dr]_-{\RDERF c_{\qcsubscript,*}(\epsilon_{\mathcal{N}} \tensor \RDERF i'_*\mathcal{M}_0)} & \RDERF
      c_{\qcsubscript,*}(\mathcal{N}) \tensor^{\LDERF}_{\Orb_{X}}
      \RDERF c_{\qcsubscript,*}\RDERF i'_*\mathcal{M}_0 \ar[d]^-{\nu_{\mathcal{N},\RDERF i'_*\mathcal{M}_0}}\\ &
      \RDERF c_{\qcsubscript,*}(\mathcal{N}
      \tensor^{\LDERF}_{\Orb_{\mathcal{X}}} \RDERF i'_*\mathcal{M}_0).
    }
  \]
  Since $c$ is faithful along $\RDERF i'_*\mathcal{M}_0$ at
  $\mathcal{N}$, the vertical map is an isomorphism. By Lemma
  \ref{L:Zequiv-locus}, the horizontal map is an isomorphism. It
  follows that the diagonal map is an isomorphism. Let $\mathcal{Q}$
  be a cone for $\epsilon_{\mathcal{N}}$; then
  $\mathcal{Q} \in \DPCOH{-}(\mathcal{X})$. Hence,
  $\mathcal{Q} \tensor^{\LDERF}_{\Orb_{\mathcal{X}}}
  \RDERF i'_*\mathcal{M}_0 \simeq \RDERF i'_*(\LDERF i'^*\mathcal{Q} \tensor^{\LDERF}_{\Orb_{\mathcal{Z}}} \mathcal{M}_0)$
  (Lemma \ref{L:projection-topoi-affine}). Let $\mathcal{Q}_0=\LDERF i'^*\mathcal{Q} \tensor^{\LDERF}_{\Orb_{\mathcal{Z}}} \mathcal{M}_0$; then $\mathcal{Q}_0 \in \DPCOH{-}(\mathcal{Z})$ and 
  \[
    0 \simeq \RDERF c_{\qcsubscript,*}(\mathcal{Q}
    \tensor^{\LDERF}_{\Orb_{\mathcal{X}}} \RDERF i'_*\mathcal{M}_0)
    \simeq \RDERF c_{\qcsubscript,*}\RDERF i'_*\mathcal{Q}_0 \simeq
    \RDERF i'_*\RDERF c_{Z,\qcsubscript,*}\mathcal{Q}_0.
  \]
  It follows immediately that $\mathcal{Q}_0 \simeq 0$ and the claim
  follows.
\end{proof}
The whole reason for introducing these notions is the following key result.
\begin{proposition}\label{P:Zequi-faithful-use}
  Assume Setup \ref{set:Zequiv}. Let
  $\mathcal{M} \in \DCAT(\mathcal{X})$ and
  $\mathcal{N} = \LDERF c^*N$, where $N\in \DPCOH{-}(X)$. If
  $\pi_{N,\mathcal{M}}$ is an isomorphism, then the following conditions are
  equivalent:
  \begin{enumerate}
  \item $c$ is faithful along $\mathcal{M}$ at $\mathcal{N}$;
  \item ${\RDERF c_{\qcsubscript,*}\mathcal{M}}\tensor \eta_N$ is an isomorphism. 
  \end{enumerate}
\end{proposition}
\begin{proof}
  This is immediate from the commutativity of the following diagram \eqref{eq:projection}:
  \[
    \begin{gathered}[b]
    \xymatrix@C+3pc{ N \tensor^{\LDERF}_{\Orb_X}(\RDERF
      c_{\qcsubscript,*}\mathcal{M}) \ar[r]^-{\eta_N \tensor {\RDERF c_{\qcsubscript,*}\mathcal{M}}} \ar[dr]_-{\pi_{N,\mathcal{M}}} & \RDERF
      c_{\qcsubscript,*}\LDERF c^*N \tensor^{\LDERF}_{\Orb_X} (\RDERF
      c_{\qcsubscript,*}\mathcal{M}) \ar[d]^{\nu_{\LDERF c^*N, \mathcal{M}}} \\ & \RDERF
      c_{\qcsubscript,*}(\LDERF c^*N
      \tensor^{\LDERF}_{\Orb_{\mathcal{X}}} \mathcal{M}). }\\[-\dp\strutbox]
  \end{gathered}
  \qedhere
  \]
\end{proof}
We conclude this section with more methods to produce examples. 
\begin{lemma}\label{L:Zequiv_pullback}
  Assume Setup \ref{set:Zequiv}.
  \begin{enumerate}
  \item \label{LI:Zequiv_pullback:global} $c$ is faithful along $\mathcal{M}$ if and only if
    \[
      \RDERF \Gamma(X,\RDERF c_{\qcsubscript,*}\mathcal{M} \tensor^{\LDERF}_{\Orb_X} \RDERF c_{\qcsubscript,*}\mathcal{N}) \to \RDERF \Gamma(\mathcal{X},\mathcal{M} \tensor^{\LDERF}_{\Orb_{\mathcal{X}}} \mathcal{N})
    \]
    is an isomorphism for all $\mathcal{N}\in \DPCOH{-}(\mathcal{X})$.
  \item\label{LI:Zequiv_pullback:ideal} If $f \colon X \to \spec A$ is flat; 
    $Z=X  \otimes_A B$, where $B$ is an $A$-algebra; and 
    \[
      B\tensor^{\LDERF}_A \RDERF \Gamma(\mathcal{X},\mathcal{N}) \to \RDERF \Gamma(\mathcal{X},\LDERF i'^*\mathcal{N})
    \]
    is an isomorphism for all
    $\mathcal{N} \in \DPCOH{-}(\mathcal{X})$; then $c$ is faithful along $Z$.
\end{enumerate}
\end{lemma}
\begin{proof}
  For \itemref{LI:Zequiv_pullback:global}: the necessity is clear. For
  the sufficiency, the perfect complexes compactly generate
  $\DQCOH(X)$ \cite[Thm.~A]{perfect_complexes_stacks}, so it suffices
  to prove that
  \[
    \RHom_{\Orb_X}(P,\RDERF c_{\qcsubscript,*}\mathcal{M} \tensor^{\LDERF}_{\Orb_X} \RDERF c_{\qcsubscript,*}\mathcal{N}) \to \RHom_{\Orb_X}(P,\RDERF c_{\qcsubscript,*}(\mathcal{M} \tensor^{\LDERF}_{\Orb_{\mathcal{X}}} \mathcal{N}))
  \]
  is an isomorphism for all perfect $P$. Since perfects are
  dualizable, the morphism above is an isomorphism if and only if the
  following morphism is an isomorphism:
  \[
    \RDERF \Gamma(X,P^\vee \tensor^{\LDERF}_{\Orb_X}\RDERF c_{\qcsubscript,*}\mathcal{M} \tensor^{\LDERF}_{\Orb_X} \RDERF c_{\qcsubscript,*}\mathcal{N}) \to \RDERF \Gamma(X,P^\vee \tensor^{\LDERF}_{\Orb_X} \RDERF c_{\qcsubscript,*}(\mathcal{M} \tensor^{\LDERF}_{\Orb_{\mathcal{X}}} \mathcal{N})).
  \]
  The projection formula (Lemma \ref{L:projection_formula}) and adjunction says that this morphism is an isomorphism if and only if the following is an isomorphism:
  \[
    \RDERF \Gamma(X,\RDERF c_{\qcsubscript,*}(\LDERF c^*P^\vee
    \tensor^{\LDERF}_{\Orb_{\mathcal{X}}} \mathcal{M})
    \tensor^{\LDERF}_{\Orb_{{X}}}\RDERF c_{\qcsubscript,*} \mathcal{N})) \to \RDERF
    \Gamma(\mathcal{X},\LDERF c^*P^\vee
    \tensor^{\LDERF}_{\Orb_{\mathcal{X}}} \mathcal{M}
    \tensor^{\LDERF}_{\Orb_{\mathcal{X}}} \mathcal{N}).
  \]
  The claim follows.

  For \itemref{LI:Zequiv_pullback:ideal},
  we take
  $\mathcal{M} = c^*f^*(B)=i'_*\Orb_{\mathcal{Z}}$;
  then
  $\RDERF c_{\qcsubscript,*}\mathcal{M} \simeq \RDERF i_*\RDERF
  c_{Z,\qcsubscript,*}\Orb_{\mathcal{Z}} \simeq f^*(B) \simeq \LDERF f^*(B)$. We next observe that the usual projection formula \cite[Cor.~4.12]{perfect_complexes_stacks} implies that 
  \[
    \RDERF \Gamma(X,\LDERF f^*(B) \tensor^{\LDERF}_{\Orb_X} \RDERF
    c_{\qcsubscript,*}\mathcal{N}) \simeq B \tensor^{\LDERF}_{A}
    \RDERF \Gamma(X,\RDERF c_{\qcsubscript,*}\mathcal{N}) \simeq B
    \tensor^{\LDERF}_A \RDERF \Gamma(\mathcal{X},\mathcal{N}).
  \]
  The claim now follows from \itemref{LI:Zequiv_pullback:global}.
\end{proof}
\begin{remark}\label{R:quasi-affine-pullback}
  Lemma \ref{L:Zequiv_pullback} can easily be refined when $X$ is quasi-affine:
  \begin{enumerate}
  \item \label{RI:quasi-affine-pullback:global} $c$ is faithful along
    $\mathcal{M}$ at $\mathcal{N}$ if and only if the following is an isomorphism:
    \[
      \RDERF \Gamma(X,\RDERF c_{\qcsubscript,*}\mathcal{M}
      \tensor^{\LDERF}_{\Orb_X} \RDERF c_{\qcsubscript,*}\mathcal{N})
      \to \RDERF \Gamma(\mathcal{X},\mathcal{M}
      \tensor^{\LDERF}_{\Orb_{\mathcal{X}}} \mathcal{N}).
    \]
  \item\label{RI:quasi-affine-pullback:ideal} If $f \colon X \to \spec A$ is flat; 
    $B = A/I$, where $I \subseteq A$ is an ideal; and
    \[
      A/I\tensor^{\LDERF}_A \RDERF \Gamma(\mathcal{X},\mathcal{N}) \to \RDERF \Gamma(\mathcal{X},\LDERF i'^*\mathcal{N})
    \]
    is an isomorphism; then $c$ is a faithful along $Z$ at
    $\mathcal{N}$.
\end{enumerate}

\end{remark}
We have the following non-noetherian and non-tor-independent example
that comes from \cite[Tag \spref{0DIA}]{stacks-project}.
\begin{example}\label{E:Zequiv-non-noetherian}
  Let $\{A_n\}_{n\geq 0}$ be an inverse system of rings with
  surjective transition maps and locally nilpotent kernel. Let
  $A=\varprojlim_n A_n$. Let $X \to \spec A$ be a proper, flat and finitely presented 
  morphism of algebraic spaces. Let
  $I_n = \ker(A \to A_n)$ and let
  $\Orb_{X_n}=\Orb_X/I_n\Orb_{X_n}$. 
  Let $\mathcal{X}$ be the ringed topos with underlying
  topos $X_{\et}$ and sheaf of rings
  $\Orb_{\mathcal{X}}=\varprojlim_n \Orb_{X_n}$ in $\MOD(X)$. There is
  a morphism of ringed topoi $c\colon \mathcal{X} \to X_{\et}$
  corresponding to $\Orb_X \to \Orb_{\mathcal{X}}$. 
  Let
  $i_n \colon X_n \to X$ and $i_n' \colon X_n \to \mathcal{X}$ be the
  resulting morphisms; note that $c \circ i_n' = i_n$.
  
  We claim that if $\mathcal{M} \in \DPCOH{-}(\mathcal{X})$, then
  $\RDERF \Gamma(\mathcal{X},\mathcal{M}) \in \DPCOH{-}(A)$. For each
  $n\geq 0$ let
  $\mathcal{M}_n = \LDERF i_n'^*\mathcal{M} \in \DPCOH{-}(X_n)$. By
  \cite[Tag \spref{0CQF}]{stacks-project} and a local calculation,
  $\mathcal{M} \simeq \holim{n} \RDERF i_{n,*}'\mathcal{M}_n$ in
  $\DCAT(\mathcal{X})$.  Also $\RDERF \Gamma(\mathcal{X},-)$ preserves
  homotopy limits, so
  \begin{align*}
    M=\RDERF \Gamma(\mathcal{X},\mathcal{M}) &\simeq \holim{n} \RDERF \Gamma(\mathcal{X},\RDERF i_{n,*}'\mathcal{M}_n) \simeq \holim{n} \RDERF \Gamma(X_n,\mathcal{M}_n).
  \end{align*}
  Let $M_n = \RDERF \Gamma(X_n,\mathcal{M}_n)$. Then $M_n$ is a
  pseudo-coherent complex of $A_n$-modules (a special case of Kiehl's
  Finiteness Theorem, see \cite[Tag \spref{0CSD}]{stacks-project}) and
  the projection formula \cite[Cor.~4.12]{perfect_complexes_stacks}
  implies that:
  \[
    M_{n+1}\otimes^{\LDERF}_{A_{n+1}} A_n \simeq \RDERF
    \Gamma(X_{n+1},\mathcal{M}_{n+1} \otimes_{\Orb_{X_{n+1}}}^{\LDERF}
    \Orb_{X_n}) \simeq M_n.
  \]
  Thus, $M$ is $A$-pseudo-coherent and
  $\RDERF \Gamma(\mathcal{X},\mathcal{M}) \tensor^{\LDERF}_A A_n
  \simeq \RDERF \Gamma(X_n,\mathcal{M}_n)$ \cite[Tag
  \spref{0CQF}]{stacks-project}. By Proposition \ref{P:adjoints},
  $\RDERF c_{\qcsubscript,*}\mathcal{M} \in \DPCOH{-}(X)$.

  In Setup \ref{set:Zequiv}, we take $Z=X_0$. By the above and Lemma
  \ref{L:Zequiv_pullback}\itemref{LI:Zequiv_pullback:ideal}, $c$ is
  faithful along $Z$. By Lemma \ref{L:Zequiv<=Zfaithful}, $c$ is even
  an equivalence along $Z$.
\end{example}
\begin{example}\label{E:locally-principal}
  Let $X$ be a quasi-compact and quasi-separated algebraic space. Let
  $L$ be a line bundle on $X$ and let $s\in \Gamma(X,L)$. Let
  $s^\vee \colon L^\vee \to \Orb_X$ be the dual morphism and let
  $I=\im(s^\vee)$ and $K=\ker(s^\vee)$. Let
  $i \colon Z \hookrightarrow X$ be the closed immersion defined by
  $I$. That is, $i$ is the vanishing locus of $s$. Let
  $\Orb_{\mathcal{X}}$ be a sheaf of $\Orb_X$-algebras (not
  necessarily quasi-coherent) such that
  \begin{enumerate}[label=(\alph*), ref=\alph*]
  \item \label{EI:locally-principal:sub} $\Orb_X/I \to \Orb_{\mathcal{X}}/I\Orb_{\mathcal{X}}$ is
    an isomorphism; and
  \item \label{E:locally-principal:tor}
    $K \cong \ker(s^\vee  \tensor_{\Orb_X}
    \Orb_{\mathcal{X}})$.
  \end{enumerate}
  This holds, for example, when $X$ is locally noetherian, $s$ is a
  regular section of $L$ (i.e., $K=0$), and $\Orb_{\mathcal{X}}$ is
  the formal completion of $X$ along $I$. This also holds when $X$ is
  non-noetherian \cite[Tag \spref{0BNG}]{stacks-project}. More
  generally, it holds when $s$ is a regular section of $L$ and of
  $c^*L$. Note that if $s$ is a regular section, then $s$ remaining a
  regular section of $c^*L$ is easily seen to be equivalent to the
  tor-independence of $c$ and $i$.
  
  Let $Z=\mathcal{Z}$ in Setup \ref{set:Zequiv}. Let
  $\mathcal{X} = (X_{\et},\Orb_{\mathcal{X}})$ and
  $C=[L^\vee \xrightarrow{s^\vee} \Orb_X]$, which is perfect. We will
  establish the following:
  \begin{enumerate}
  \item \label{EI:locally-principle:eta} $\eta_C$ is an isomorphism;
  \item \label{EI:locally-principle:faith} $c$ is faithful along $\LDERF c^*C$;
    and
  \item \label{EI:locally-principle:equi} if
    $\mathcal{N} \in \DPCOH{-}(\mathcal{X})$ is such that
    $\mathcal{N} \tensor^{\LDERF}_{\Orb_X} C$ or
    $\mathcal{N} \tensor^{\LDERF}_{\Orb_X} K$ belongs to $\DQCOH(X)$,
    then $c$ is an equivalence along $\LDERF c^*C$ at $\mathcal{N}$.
  \end{enumerate}
  Condition \itemref{EI:locally-principle:equi} is of course trivially
  satisfied when $s$ is a regular section of $L$.

  We first prove \itemref{EI:locally-principle:eta}. Consider the morphism of distinguished triangles:
  \[
    \xymatrix{\COHO{-1}(C)[1] \ar[r] \ar[d] & C \ar[r] \ar[d] & \COHO{-0}(C)[0] \ar[r] \ar[d] & \COHO{-1}(C)[2] \ar[d]\\
    \COHO{-1}(\RDERF c_{\qcsubscript,}\LDERF c^*C)[1] \ar[r] & \RDERF c_{\qcsubscript,}\LDERF c^*C \ar[r] & \COHO{0}(\RDERF c_{\qcsubscript,}\LDERF c^*C)[0] \ar[r] & \COHO{-1}(\RDERF c_{\qcsubscript,}\LDERF c^*C)[2].}
  \]
  Now $\COHO{-0}(C) = \Orb_X/I$ and
  $\COHO{-0}(\LDERF c^*C) = \Orb_{\mathcal{X}}/I\Orb_{\mathcal{X}}$,
  which are isomorphic by
  \itemref{EI:locally-principal:sub}. Similarly,
  $\COHO{-1}(C) = \ker(L^\vee \to \Orb_X)$ and
  $\COHO{-1}(\LDERF c^*C) = \ker(L^\vee \tensor_{\Orb_X}
  \Orb_{\mathcal{X}} \to \Orb_{\mathcal{X}})$ are isomorphic by
  \itemref{E:locally-principal:tor}. In particular, both
  $\COHO{0}(\LDERF c^*C)$ and $\COHO{-1}(\LDERF c^*C)$ are
  quasi-coherent $\Orb_X$-modules. It follows immediately that
  $\LDERF c^*C$ is a quasi-coherent $\Orb_X$-module and so
  $\RDERF c_{\qcsubscript,*}\LDERF c^*C \simeq \RDERF c_*\LDERF c^*C
  \simeq C \tensor^{\LDERF}_{\Orb_X} \Orb_{\mathcal{X}}$ as
  $\Orb_X$-modules. The claim follows.

  We next prove \itemref{EI:locally-principle:faith}. Now condition
  \itemref{EI:locally-principal:sub} implies that
  $\MOD(Z) \simeq \MOD(\mathcal{Z})$, so
  $\DPCOH{-}(Z) \simeq \DPCOH{-}(\mathcal{Z})$. Since $C$ is perfect,
  Lemma \ref{L:projection_formula} and Proposition
  \ref{P:Zequi-torind}\itemref{PI:Zequi-torind:eta} together with
  \itemref{EI:locally-principle:eta} imply $c$ is faithful along
  $\LDERF c^*C$. Claim \itemref{EI:locally-principle:equi} is similar, so
  its proof is omitted.
\end{example}
The previous example can be generalized to the vanishing locus of a section of vector bundle.
\begin{example}\label{E:section-vb}
  Let $X$ be a quasi-compact and quasi-separated algebraic space. Let
  $F$ be a vector bundle on $X$ and let $s\in \Gamma(X,F)$. Let
  $i\colon Z \hookrightarrow X$ be the vanishing locus of $s$. Let
  $\Orb_{\mathcal{X}}$ be a sheaf of $\Orb_X$-algebras (not
  necessarily quasi-coherent) such that
  $K(s^\vee) \to K(s^\vee) \tensor_{\Orb_X}^{\LDERF}
  \Orb_{\mathcal{X}}$ is a quasi-isomorphism of $\Orb_X$-modules,
  where $K(s^\vee)$ is the Koszul complex associated to
  $s^\vee \colon F^\vee \to \Orb_X$ \cite[\S IV.2]{MR801033}. If $s$
  is a regular section of $F$, then this condition is equivalent to $s$
  remaining a regular section of
  $F\tensor_{\Orb_X} \Orb_{\mathcal{X}}$. As before, this condition is
  satisfied when $s$ is a regular section and $\Orb_{\mathcal{X}}$ is
  the formal completion of $\Orb_X$ along $I=\im(s^\vee)$. If $F$ is a
  line bundle, then
  $K(s^\vee) = [F^\vee \xrightarrow{s^\vee} \Orb_X]$. Hence, we see
  that this condition is equivalent to those in Example
  \ref{E:locally-principal}. Arguing as in Example
  \ref{E:locally-principal}, one can establish the following:
  \begin{enumerate}
  \item $c$ is faithful along $\LDERF c^*K(s^\vee)$; and
  \item if $\mathcal{N} \in \DPCOH{-}(\mathcal{X})$ is such that
    $\mathcal{N} \tensor^{\LDERF}_{\Orb_X} K(s^\vee) \in \DQCOH(X)$,
    then $c$ is an equivalence along $\LDERF c^*K(s^\vee)$ at
    $\mathcal{N}$.
  \end{enumerate}

\end{example}
\section{Lefschetz Theorems}\label{S:lefschetz}
To illustrate the strength of our reformulation, we can give a
brief proof of the following Lefschetz theorem.
\begin{theorem}\label{T:general-lefschetz}
  Let $X$ be a quasi-compact and quasi-separated algebraic space. Let
  $c\colon \mathcal{X} \to X_{\et}$ be a morphism of ringed topoi. Let
  $i\colon Z \to X$ be a closed immersion and let $r\geq 0$ be an integer. If
  \begin{enumerate}
  \item $U=X-Z$ is quasi-affine;
  \item $\RDERF \Gamma(X,\Orb_X) \to \RDERF \Gamma(\mathcal{X},\Orb_{\mathcal{X}})$ is $r$-connected; and
  \item there exists
    $\mathcal{M} \in \DCAT(\mathcal{X})$ such that
    \begin{enumerate}
    \item $\RDERF c_{\qcsubscript,*}\mathcal{M}$ is perfect with
      cohomological support $|Z|$, and
    \item  $c$ is faithful along
      $\mathcal{M}$ at $\Orb_{\mathcal{X}}$;
  \end{enumerate}
  \end{enumerate}
  then $\Orb_X \to \RDERF c_{\qcsubscript,*}\Orb_{\mathcal{X}}$ is
  $r$-connected. In particular, if $N\in \DQCOH(X)$ is perfect of tor-amplitude $\geq a$, 
  \[
    \RDERF \Gamma(X,N) \to \RDERF \Gamma(\mathcal{X},\LDERF c^*N)
  \]
  is $r+a$-connected.
\end{theorem}
\begin{proof}
  By the projection formula (Lemma \ref{L:projection_formula}),
  $\pi_{\Orb_X,\mathcal{M}}$ is an isomorphism. Thus, Proposition
  \ref{P:Zequi-faithful-use} implies that
  $\RDERF c_{\qcsubscript,*}\mathcal{M} \tensor \eta_{\Orb_X}$ is an
  isomorphism. Let $H$ be a cone for
  $\eta_{\Orb_X} \colon \Orb_X \to \RDERF
  c_{\qcsubscript,*}\Orb_{\mathcal{X}}$; then we have just proved that
  $H \tensor^{\LDERF}_{\Orb_X} \RDERF c_{\qcsubscript,*}\mathcal{M}
  \simeq 0$ (see \eqref{LEQ:key-projection-ff:2}). It remains to prove
  that $\trunc{\leq r}H \simeq 0$. Let $j \colon U \to X$ be the
  resulting open immersion. The theory of smashing Bousfield
  localizations implies immediately that
  $H \simeq \RDERF j_*\LDERF j^*H$ (e.g.,
  \cite[Ex.~1.4]{telescope-stacks}). 
  Now $\trunc{\leq -2}H \simeq 0$ and so
  \[
    \Gamma(U,\COHO{-1}(j^*H))\simeq \shfcoho^{-1}(U,j^*H) \simeq \shfcoho^{-1}(X,H) \simeq 0.
  \]
  Since $U$ is quasi-affine, $\COHO{-1}(j^*H) = 0$ and so
  $\COHO{-1}(H) \simeq j_*\COHO{-1}(j^*H) \simeq 0$. That is,
  $\trunc{\leq -1}H \simeq 0$. Repeating this argument, we obtain that
  $\trunc{\leq r}H \simeq 0$; that is,
  $\Orb_X \to \RDERF c_{\qcsubscript,*}\Orb_{\mathcal{X}}$ is
  $r$-connected. 
\end{proof}
In the tor-independent case, we have the following variant of Theorem \ref{T:general-lefschetz}.
\begin{corollary}\label{C:FF}
  Let $X$ be a quasi-compact and quasi-separated algebraic space. Let
  $c \colon \mathcal{X} \to X_{\et}$ be a morphism of ringed topoi. Let $i\colon Z \hookrightarrow X$ a closed immersion and let $r \geq 0$ be an integer. If
  \begin{enumerate}
  \item $U=X-Z$ is quasi-affine,
  \item   \label{MTI:dim_1FF:h0} the morphism
    $\RDERF \Gamma(X,\Orb_X) \to
    \RDERF \Gamma(\mathcal{X},\Orb_{\mathcal{X}})$ is $r$-connected,
    \item $c$ and $i$ are tor-independent,  
  \item $\Orb_X$ is coherent  or $i_*\Orb_Z$ is
    perfect, and
  \item $\DPCOH{-}(X_{\et},i_*\Orb_Z) \simeq \DPCOH{-}(\mathcal{X},c^*i_*\Orb_Z)$;
      \end{enumerate}
        then $\Orb_X \to \RDERF c_{\qcsubscript,*}\Orb_{\mathcal{X}}$ is
  $r$-connected. In particular, if $N \in \DQCOH(X)$ is perfect of tor-amplitude $\geq a$,
  \[
    \RDERF \Gamma(X,E) \to \RDERF \Gamma(\mathcal{X},c^*E)
  \]
  is $r+a$-connected.
\end{corollary}
\begin{proof}
  If $i_*\Orb_Z$ is perfect, then set
  $\mathcal{M} = \LDERF c^*i_*\Orb_Z$. If $\Orb_X$ is coherent, then
  set $\mathcal{M}=\LDERF c^*M$, where $M$ is as in Lemma
  \ref{C:Zequiv-examples-tor-ind-coherent}. Since $c$ and $i$ are
  tor-independent, Corollary
  \ref{C:Zequiv-examples-tor-ind}\itemref{LI:Zequiv-examples-tor-ind:perfect}
  (in the perfect case) and Corollary
  \ref{C:Zequiv-examples-tor-ind-coherent} (in the coherent case)
  imply that $c$ is faithful along $\mathcal{M}$. Moreover, Lemma
  \ref{L:Zequiv-locus} implies that
  $\RDERF c_{\qcsubscript,*}\mathcal{M} = \RDERF
  c_{\qcsubscript,*}\LDERF c^*M \simeq M$, which is perfect with
  cohomological support $|Z|$. The
  result now follows from Theorem \ref{T:general-lefschetz}.
\end{proof}
In the following theorem, we can optimize the above results
substantially in the case of a Cartier divisor, making them amenable
to an inductive process.
\begin{theorem}\label{T:lefschetz-cartier}
  Let $X$ be a quasi-compact and quasi-separated algebraic space. Let
  $c\colon \mathcal{X} \to X_{\et}$ be a morphism of ringed topoi. Let
  $L$ be a line bundle on $X$, $s\in \Gamma(X,L)$, and
  $i\colon Z \hookrightarrow X$ its vanishing locus. Let
  $r \geq 0$ be an integer. If
  \begin{enumerate}
  \item\label{TI:lefschetz-cartier:qaff} $U = X-Z$ is quasi-affine;
    \item\label{TI:lefschetz-cartier:glob}
    $\RDERF\Gamma(X,\Orb_X) \to \RDERF
    \Gamma(\mathcal{X},\Orb_{\mathcal{X}})$ is $r$-connected; and either 
  \item\label{TI:lefschetz-cartier:slice}
   \begin{enumerate} 
   \item $C \to \RDERF c_{\qcsubscript,*}\LDERF c^*C$ is $r$-connected,
    where $C=[L^\vee \xrightarrow{s^\vee} \Orb_X]$; or 
  \item \label{TI:lefschetz-cartier:ti} $c$ and $i$ are tor-independent and either
\begin{enumerate}
\item $\Orb_{Z} \to \RDERF c_{Z,\qcsubscript,*}\Orb_{\mathcal{Z}}$ is
    $r$-connected; or
    \item $Z$ is quasi-affine and $\RDERF \Gamma(Z,\Orb_Z) \to \RDERF \Gamma(\mathcal{Z},\Orb_{\mathcal{Z}})$ is $r$-connected;
   \end{enumerate}
    \end{enumerate}
  \end{enumerate}
  then $\Orb_X \to \RDERF c_{\qcsubscript,*}\Orb_{\mathcal{X}}$ is
  $r$-connected. In particular, if $N \in \DQCOH(X)$ is perfect of tor-amplitude $\geq a$,
  \[
    \RDERF \Gamma(X,E) \to \RDERF \Gamma(\mathcal{X},c^*E)
  \]
  is $r+a$-connected.
\end{theorem}
\begin{proof}
  Let $H_N$ be the cone of $\eta_{N}$ and set $H=H_{\Orb_X}$; it suffices to show that $\trunc{\leq r}H \simeq 0$. Since $C$ is perfect, $\pi_{C,\LDERF c^*\Orb_{{X}}}$ is an
    isomorphism (Lemma
  \ref{L:projection_formula}). In particular, 
    $H \tensor^{\LDERF}_{\Orb_X} C \simeq H_C$. By condition
    \itemref{TI:lefschetz-cartier:slice}, we conclude that
    $\trunc{\leq r}(H \tensor^{\LDERF}_{\Orb_X} C) \simeq
    0$. Now consider the distinguished triangle:
  \[
    \xymatrix{L^\vee \tensor^{\LDERF}_{\Orb_X} H \ar[r] & H \ar[r] & C
      \tensor^{\LDERF}_{\Orb_X} H \ar[r] & L^\vee \tensor^{\LDERF}_{\Orb_X}
      H[1].}
  \]
  But $L^\vee$ is a line bundle, so
  $\trunc{\leq r}(L^\vee \tensor^{\LDERF}_{\Orb_X} H) \simeq L^\vee
  \tensor^{\LDERF}_{\Orb_X} \trunc{\leq r}H$. It follows immediately
  from the distinguished triangle above that
  $C \tensor^{\LDERF}_{\Orb_X} \trunc{\leq r}H \simeq 0$. Let
  $j \colon U = X -Z \hookrightarrow X$ be the resulting open
  immersion, which is affine; then the theory of smashing Bousfield
  localizations implies immediately that
  $\trunc{\leq r}H \simeq \RDERF j_*\LDERF j^*\trunc{\leq r}H$ (e.g.,
  \cite[Ex.~1.4]{telescope-stacks}). Noting that
  $\trunc{\leq j}\RDERF\Gamma(X,H) \simeq \trunc{\leq j}\RDERF
  \Gamma(X,\trunc{\leq j}H)$ for all $j\in \Z$, now argue as in
  Theorem \ref{T:general-lefschetz}. 
\end{proof}
\section{Pseudo-conservation}
Let $\mathcal{X}$ be a ringed topos. We say that a collection
$S \subseteq \DCAT(\mathcal{X})$ is \emph{pseudo-conservative} if whenever
$\mathcal{M} \in \DPCOH{-}(\mathcal{X})$ satisfies
$\mathcal{M} \otimes^{\LDERF}_{\Orb_{\mathcal{X}}} \mathcal{Q} \simeq 0$ for all
$\mathcal{Q} \in S$, then $\mathcal{M} \simeq 0$.
\begin{example}\label{E:closed-points}
  Let $X$ be a locally ringed space or quasi-compact and quasi-separated algebraic space. Let
  $|X|_{\mathrm{cl}}$ be the set of closed points of $X$. The
  collection $\{ \kappa(x) \}_{x\in |X|_{\mathrm{cl}}}$ is
  pseudo-conservative. This is immediate from Nakayama's Lemma.
\end{example}
\begin{example}\label{E:closed-morphism}
  Let $A$ be a ring. Let $X \to \spec A$ be a quasi-compact and closed
  morphism of algebraic spaces. Let $I \subseteq A$ be an ideal
  contained in the Jacobson radical of $A$. Let
  $X_0 = X\times_{\spec A} \spec (A/I)$ and take $i \colon X_0 \to X$
  be the resulting closed immersion. Then $\{\Orb_{X_0}\}$ is
  pseudo-conservative. Indeed, if $M \in \DPCOH{-}(X)$ is non-zero,
  then its top cohomology group $\COHO{top}(M)$ is finitely
  generated. It follows that its support $W$ is a non-empty closed
  subset of $X$. Hence, the image of $W$ in $\spec A$ is closed and
  non-empty. Since $I$ is contained in the Jacobson radical of $A$,
  $W$ meets $\spec (A/I)$.
\end{example}
We have the following useful lemma. 
\begin{lemma}\label{L:top-cons}
  Let $\mathcal{X}$ be a ringed topos. Let $S \subseteq \DCAT^-(\mathcal{X})$ be a collection of
  objects. Consider 
  \[
    S' = \{ \COHO{t(\mathcal{Q})}(\mathcal{Q}) \suchthat \mathcal{Q} \in S\} \subseteq \MOD(\mathcal{X}),
  \]
  where $t(\mathcal{Q})$ denotes the top cohomological degree of
  $\mathcal{Q}$. If $\Orb_{\mathcal{X}}$ is coherent, then the following are equivalent:
  \begin{enumerate}
  \item $S$ is pseudo-conservative; 
  \item if $\mathcal{M} \in \COH(\mathcal{X})$ and
  $\mathcal{M} \tensor_{\Orb_X} \mathcal{Q} \cong 0$ for all
  $\mathcal{Q} \in S'$, then $\mathcal{M} \cong 0$. 
\end{enumerate}
\end{lemma}
\begin{proof}
  This is immediate from the following: if $\mathcal{M}$,
  $\mathcal{N} \in \DCAT^-(\mathcal{X})$, then
  \[
    \COHO{t(\mathcal{M})+t(\mathcal{Q})}(\mathcal{M}
    \tensor^{\LDERF}_{\Orb_{\mathcal{X}}} \mathcal{N}) \cong \COHO{t(\mathcal{M})}(\mathcal{M}) \tensor_{\Orb_{\mathcal{X}}} \COHO{t(\mathcal{N})}(\mathcal{N}).  \qedhere
  \]
\end{proof}

\section{GAGA}
In this section, we prove our general GAGA theorem. We will see in
\S\ref{S:applications} that this implies all
existing results in the literature for algebraic spaces. Given what we
have already established, its proof is straightforward.
\begin{theorem}\label{T:npGAGA}
  Let $X$ be a quasi-compact and quasi-separated algebraic space. Let
  $c\colon \mathcal{X} \to X_{\et}$ be a morphism of ringed topoi. Let
  $\Lambda$ index a family of $2$-commutative diagrams
  of ringed topoi:
  \[
    \xymatrix{\mathcal{Z}_\lambda \ar[r]^{c_\lambda}
      \ar[d]_{i_\lambda'} & Z_{\lambda,\et} \ar[d]^{i_\lambda} \\
      \mathcal{X} \ar[r]^c & X_{\et},}
  \]
  where $i_\lambda$ is quasi-affine for all $\lambda\in \Lambda$. For
  each $\lambda\in \Lambda$, let
  $\mathcal{M}_\lambda \in \langle \RDERF
  i'_{\lambda,*}\DPCOH{-}(\mathcal{Z}_\lambda)\rangle$.
  \begin{enumerate}[label=(\roman*), ref=\roman*]
  \item\label{TI:npGAGA:ff} Let $N\in \DPCOH{-}(X)$. Assume that
    $\RDERF c_{\qcsubscript,*}\LDERF c^*N \in \DPCOH{-}(X)$ and
    $\{\RDERF c_{\qcsubscript,*}\mathcal{M}_\lambda\}_{\lambda\in
      \Lambda}$ is pseudo-conservative. If $c$ is faithful along
    $\mathcal{M}_\lambda$ at $\LDERF c^*N$ for all
    $\lambda \in \Lambda$, then
    \[
      \eta_M \colon M \to \RDERF c_{\qcsubscript,*}\LDERF c^*M
    \]
    is an isomorphism. 
  \item\label{TI:npGAGA:ess} Let
    $\mathcal{N} \in \DPCOH{-}(\mathcal{X})$. Assume that
    $\RDERF c_{\qcsubscript,*}\mathcal{N} \in \DPCOH{-}(X)$ and
    $\{\mathcal{M}_{\lambda}\}_{\lambda\in \Lambda}$ is
    pseudo-conservative. If $c$ is an
    equivalence along $\mathcal{M}_\lambda$ at $\mathcal{N}$ for all $\lambda \in \Lambda$,
    then
    \[
      \epsilon_{\mathcal{M}} \colon \LDERF c^*\RDERF
      c_{\qcsubscript,*}\mathcal{M} \to \mathcal{M}
    \]
    is an isomorphism.
  \end{enumerate}
  In addition assume that $*\in \{b,-\}$ and
  \begin{enumerate}
  \item $X$ is proper and pseudo-coherent over an affine scheme
    $\spec A$;
  \item \label{TI:npGAGA:bounded} $\RDERF \Gamma(\mathcal{X},-)$ sends $\DPCOH{*}(\mathcal{X})$
    to $\DPCOH{*}(A)$; and
  \item if $*=b$, then $\LDERF c^*$ sends $\DPCOH{b}(X)$ to $\DPCOH{b}(\mathcal{X})$.
  \end{enumerate}
  If
  $\{\RDERF c_{\qcsubscript,*}\mathcal{M}_\lambda \}_{\lambda\in
    \Lambda}$ (resp.~$\{\mathcal{M}_\lambda\}_{\lambda\in \Lambda}$)
  is pseudo-conservative and $c$ is
  faithful (resp.~an equivalence) by
  $\mathcal{M}_\lambda$ for all $\lambda\in \Lambda$, then
  \[
    \LDERF c^* \colon \DPCOH{*}(X) \to \DPCOH{*}(\mathcal{X})
  \]
  is fully faithful (resp.~essentially surjective).
\end{theorem}
\begin{proof}
  For \itemref{TI:npGAGA:ff}, by Lemma \ref{L:Zequiv-locus} and Proposition
  \ref{P:Zequi-faithful-use}, we have that
  $ \RDERF c_{\qcsubscript,*} \mathcal{M}_\lambda \tensor \eta_N$ is an
  isomorphism for all $\lambda\in \Lambda$. But $N$ and
  $\RDERF c_{\qcsubscript,*}\LDERF c^*N \in \DPCOH{-}(X)$, so
  $\cone(\eta_N) \in \DPCOH{-}(X)$. Since
  $\{\RDERF c_{\qcsubscript,*}\mathcal{M}_\lambda\}_{\lambda\in
    \Lambda}$ is pseudo-conservative, the claim follows.

  For \itemref{TI:npGAGA:ess}, 
  $\mathcal{M}_\lambda \tensor \epsilon_{\mathcal{N}}$ is an
  isomorphism for all $\lambda\in \Lambda$. But $\mathcal{N}$, 
  $\LDERF c^*\RDERF c_{\qcsubscript,*}\mathcal{N} \in
  \DPCOH{-}(\mathcal{X})$, so
  $\cone(\epsilon_{\mathcal{N}}) \in \DPCOH{-}(\mathcal{X})$. Since
  $\{\mathcal{M}_\lambda\}_{\lambda\in \Lambda}$ is
  pseudo-conservative, the claim follows.

  The last claim is immediate from the above and Proposition
  \ref{P:adjoints}.
\end{proof}
\begin{remark}
  In Theorem \ref{T:npGAGA}\itemref{TI:npGAGA:ff}, if we do not assume
  that $\RDERF c_{\qcsubscript,*}\LDERF c^*N$ belongs to
  $\DPCOH{-}(X)$, but the
  $\{\RDERF c_{\qcsubscript,*}\mathcal{M}_\lambda\}_{\lambda\in
    \Lambda}$ are \emph{conservative} (i.e., if $Q \in \DQCOH(X)$ and
  $Q \tensor^{\LDERF}_{\Orb_X} \RDERF
  c_{\qcsubscript,*}\mathcal{M}_\lambda \simeq 0$ for all
  $\lambda\in \Lambda$ implies that $Q \simeq 0$), then we get the
  same conclusion.
\end{remark}
\section{Applications}\label{S:applications}
We begin with the following tor-independent refinement of Theorem \ref{T:npGAGA}. 
\begin{theorem}\label{T:ncGAGA}
  Let $A$ be a ring. Let $\pi \colon X \to \spec A$ be a proper and
  pseudo-coherent morphism of algebraic spaces. Let
  $c\colon \mathcal{X} \to X_{\et}$ be a morphism of ringed
  topoi.  Let
  $\Lambda$  index a family of $2$-commutative diagrams
  of ringed topoi:
  \[
    \xymatrix{\mathcal{Z}_\lambda \ar[r]^{c_\lambda}
      \ar[d]_{i_\lambda'} & Z_{\lambda,\et} \ar[d]^{i_\lambda} \\
      \mathcal{X} \ar[r]^c & X_{\et},}
  \]
  where $i_\lambda$ is affine for all $\lambda\in \Lambda$. Let $*\in \{b,-\}$. Assume that
  \begin{enumerate}[label=(\alph*), ref=\alph*]
  \item\label{TI:ncGAGA:bounded}
    $\RDERF \Gamma(\mathcal{X},-)$ sends
    $\DPCOH{*}(\mathcal{X})$ to $\DPCOH{*}(A)$;
  \item\label{TI:ncGAGA:flat} for all $\lambda\in \Lambda$, $c$ and
    $i_\lambda$ are tor-independent;
  \item for all $\lambda\in \Lambda$, $\DPCOH{-}(\mathcal{Z}_\lambda) \simeq \DPCOH{-}(\mathcal{X},i'_{\lambda,*}\Orb_{\mathcal{Z}_\lambda})$; 
  \item\label{TI:ncGAGA:Zequiv} for all $\lambda\in \Lambda$,
    $\LDERF c_\lambda^* \colon\DPCOH{-}(Z_\lambda) \to
    \DPCOH{-}(\mathcal{Z}_\lambda)$ is an equivalence;
  \item\label{TI:ncGAGA:btor} if $*=b$, then $\LDERF c^*$ sends
    $\DPCOH{b}(X)$ to $\DPCOH{b}(\mathcal{X})$;
  \item\label{TI:ncGAGA:projection} $\Orb_X$ is coherent; or
    $\Orb_{\mathcal{X}}$ is a compact object of $\DCAT(\mathcal{X})$;
    or for all $\lambda\in \Lambda$, $i_{\lambda,*}\Orb_{Z_\lambda}$
    is perfect.
  \end{enumerate}
  If $\{\Orb_{Z_{\lambda}}\}_{\lambda\in \Lambda}$
  (resp.~$\{\Orb_{\mathcal{Z}_\lambda}\}_{\lambda\in \Lambda}$) is
  pseudo-conservative, then
  \[
    \LDERF c^* \colon \DPCOH{*}(X) \to \DPCOH{*}(\mathcal{X})
  \]
  is fully faithful (resp.~essentially surjective).
\end{theorem}
\begin{proof}
  In the case where $\Orb_{\mathcal{X}}$ is a compact object of
  $\DCAT(\mathcal{X})$ or $i_{\lambda,*}\Orb_{Z_\lambda}$ is perfect
  for all $\lambda\in \Lambda$, the result is immediate from
  Theorem \ref{T:npGAGA} and Corollary
  \ref{C:Zequiv-examples-tor-ind}. In the case where $\Orb_X$ is
  coherent, we apply Corollary
  \ref{C:Zequiv-examples-tor-ind-coherent} to produce a perfect
  complex
  $M_\lambda \in \langle \RDERF i_*\DPCOH{-}(Z_\lambda)\rangle$ with
  $c$ an equivalence along $\LDERF c^*M_\lambda$ for all
  $\lambda\in \Lambda$. But if
  $\{\Orb_{Z_\lambda}\}_{\lambda\in \Lambda}$
  (resp.~$\{\Orb_{\mathcal{Z}_\lambda}\}_{\lambda\in \Lambda}$) is
  pseudo-conservative, then $\{M_\lambda\}_{\lambda\in \Lambda}$
  (resp.~$\{\LDERF c^*M_\lambda\}_{\lambda\in \Lambda}$) is
  pseudo-conservative (Lemma \ref{L:top-cons}). Now apply Theorem
  \ref{T:npGAGA}.
\end{proof}
It is easy to use Theorems \ref{MT:A}, \ref{T:npGAGA}, and
\ref{T:ncGAGA} to prove existing GAGA results.
\begin{example}[Analytic spaces]\label{E:anGAGA}
  Let $X \to \spec \C$ be a proper scheme. Let
  $c\colon X_{\mathrm{an}} \to X$ be its complex analytification. Now
  $X_{\mathrm{an}}$ is a Hausdorff topological space and $c$ is
  bijective on closed points; indeed $|X_{\mathrm{an}}|=X(\spec
  \C)$. Also, the local rings of $\Orb_{X_{\mathrm{an}}}$ are
  noetherian and the induced morphism
  $\Orb_{X,c(x)} \to \Orb_{X_{\mathrm{an}},x}$ is an isomorphism on
  maximal-adic completions \cite[XII.1.1]{SGA1}. By Remark \ref{R:MT:A},
  we see that conditions \itemref{GTI:A:bijective},
  \itemref{GTI:A:support}, and \itemref{GTI:A:flatunr_along_S} are satisfied. Cartan--Serre \cite{cartan-serre_finiteness} (also see 
  \cite[10.5.6]{MR755331}) gives condition
  \itemref{GTI:A:bd}. Condition \itemref{GTI:A:coh} is 
  Oka's Coherence Theorem (see \cite{oka_coherence} and \cite[2.5.3]{MR755331}). By Theorem \ref{MT:A}
  we may conclude that if $F\in \COH(X)$, then
  \[
    \shfcoho^{i}(X,F) \simeq \shfcoho^i(X_{\mathrm{an}},F_{\mathrm{an}})
  \]
  and $c^* \colon \COH(X) \to \COH(X_{\mathrm{an}})$ is an
  equivalence.
\end{example}
\begin{example}[Formal GAGA]
  Let $X \to \spec R$ be a proper morphism of schemes. Assume that $R$
  is noetherian. Let $I \subseteq R$ be an ideal and assume that $R$
  is complete with respect to the $I$-adic topology. Let
  $c\colon \hat{X} \to X$ be the formal completion of $X$ along the
  closed subscheme $X_0=X\otimes_R (R/I)$. It is easily verified using
  the results of \cite[III${}_1$]{EGA} that $c$ satisfies the
  hypotheses of Theorem \ref{MT:A}. Hence, we have the cohomological
  comparison result and the equivalence on categories of sheaves. It
  is also easy to use these arguments and Theorem
  \ref{T:ncGAGA} to prove formal GAGA for proper
  algebraic spaces. We again leave this as an exercise to the
  reader. One can also use these arguments to prove the formal
  GAGA statements of \cite{fujiwara-kato-I}, which hold for
  certain non-noetherian base rings $A$ (e.g., $A$ is the $a$-adic
  completion of a finitely presented $V$-algebra, where $V$ is an
  $a$-adically complete valuation ring.).
\end{example}
\begin{example}[Rigid GAGA]
  Let $X \to \spec R$ be a proper morphism of schemes. Let $k$ be a
  complete nonarchimean field. Assume that $R$ is an affinoid
  $k$-algebra; that is, it is a Banach $k$-algebra that is a quotient
  of some Tate algebra
  $T_n=k\langle\!\langle Y_1,\dots, Y_n \rangle\!\rangle$, where $Y_n$
  is the subalgebra of $k[[Y_1,\dots,Y_n]]$ consisting of power series
  that are convergent with respect to the Gauss norm (i.e., suprema of
  coefficients). Associated to $X$ is a natural morphism of locally
  ringed $G$-spaces $c \colon X_{\mathrm{rig}} \to X$, where
  $X_{\mathrm{rig}}$ is a rigid analytic space. The underlying
  topological space of $X_{\mathrm{rig}}$ is Hausdorff and its points
  correspond to closed points of $X$. Moreover,
  $\Orb_{X_{\mathrm{rig}}}$ is a coherent sheaf with noetherian local
  rings. Also, $R$ is noetherian. Kiehl's Finiteness Theorem
  \cite{MR0210948} implies that the cohomology of coherent sheaves on
  $\Orb_{X_{\mathrm{rig}}}$ satisfies the condition \itemref{GTI:A:bd}
  of Theorem \ref{MT:A}. Again, we get the cohomological comparison
  result and equivalence on categories of coherent sheaves. Using
  \cite{MR2524597}, one can make sense of rigid analytifications of
  separated algebraic spaces. This allows one to prove rigid GAGA in
  this context too. One can also prove adic and Berkovich GAGA
  statements using this method.
\end{example}
\begin{example}[Non-noetherian formal GAGA]\label{E:non-noetherian-gaga}
  Here we will use Theorem \ref{T:npGAGA} to prove the
  GAGA result in the Stacks Project \cite[Tag
  \spref{0DIA}]{stacks-project}. The situation is as in Example \ref{E:Zequiv-non-noetherian}, and it is immediate from \ref{T:npGAGA} that we obtain an equivalence
  \[
    \LDERF c^* \colon \DPCOH{-}(X) \to \DPCOH{-}(\mathcal{X}).
  \]
\end{example}
The following is a variant of the results established in
\cite[\S1]{MR3250065}. It is a simple consequence of Theorem
\ref{T:lefschetz-cartier}.
\begin{theorem}\label{T:lefschetz-dadic}
  Let $X$ be a quasi-compact and quasi-separated algebraic
  space. Let $i \colon D \subseteq X$ be a Cartier divisor. Let
  $c\colon \hat{X} \to X$ be the $D$-adic completion. If
  \begin{enumerate}
  \item \label{TI:lefschetz-dadic:qaff} $X-D$ is quasi-affine;
  \item \label{TI:lefschetz-dadic:h0}
    $\shfcoho^0(X,\Orb_X) \simeq \varprojlim_r \shfcoho^0(X,\Orb_{D_r})$; and
  \item \label{TI:lefschetz-dadic:h1}
    $\mathrm{H}^1(X,\Orb_X(-rD)) = 0$ for all $r \gg 0$;
  \end{enumerate}
  then the comparison morphism:
  \[
    \shfcoho^0(X,E) \to \shfcoho^0(\hat{X},\hat{E})
  \]
  is an isomorphism for all vector bundles $E$ on $X$.
\end{theorem}
\begin{proof}
  We simplify verify that the hypotheses of Theorem
  \ref{T:lefschetz-cartier} are satisfied. General properties of
  completion say that $c$ and $i$ are tor-independent \cite[Tag
  \spref{0BNG}]{stacks-project}. By Example \ref{E:Zequiv}, it remains
  to prove that
  $\shfcoho^i(X,\Orb_X) \to \shfcoho^i(\hat{X},\Orb_{\hat{X}})$ is an
  isomorphism when $i=0$ and is injective when $i=1$. The $i=0$ condition
  is immediate from \itemref{TI:lefschetz-dadic:h0}. For the $i=1$
  case, we have the Milnor exact sequence:
  \[
    \xymatrix{0 \ar[r] & \varprojlim_n^1 \shfcoho^0(X,\Orb_{D_n})
      \ar[r] & \shfcoho^1(\hat{X},\Orb_{\hat{X}}) \ar[r] &
      \varprojlim_n \shfcoho^1(X,\Orb_{D_n}) \ar[r] & 0.}
  \]
  It follows from \itemref{TI:lefschetz-dadic:h1} that
  $\{ \shfcoho^0(X,\Orb_{D_n})\}_{n\geq 0}$ is eventually a surjective
  system, so is Mittag--Leffler. In particular, $\varprojlim^1_n$
  vanishes. Hence,
  $\shfcoho^1(\hat{X},\Orb_{\hat{X}}) \simeq \varprojlim_n
  \shfcoho^1(X,\Orb_{D_n})$. But \itemref{TI:lefschetz-dadic:h1} also
  gives
  $\shfcoho^1(X,\Orb_X) \hookrightarrow \shfcoho^1(X,\Orb_{D_n})$ for
  $n\gg 0$.  Since inverse limits are left exact, this gives
  $\shfcoho^1(X,\Orb_X) \hookrightarrow \varprojlim_n
  \shfcoho^1(X,\Orb_{D_n})$; the result follows.
\end{proof}
\begin{remark}\label{R:lefschetz-dadic}
  Theorem
  \ref{T:lefschetz-dadic}\itemref{TI:lefschetz-dadic:h0} is implied
  by $\shfcoho^0(X,\Orb_X(-rD)) = 0$ for all $r \gg 0$.
\end{remark}
\begin{example}\label{E:leschetz-qaff}
  Let $X$ be a quasi-affine scheme. Let $A=\Gamma(X,\Orb_X)$ and
  $a\in A$. Let $D=\spec(A/a) \cap X \subseteq X$ and let $c \colon \hat{X} \to X$
  be the formal completion of $X$ along $D$. Assume
  \begin{enumerate}
  \item \label{CI:leschetz-qaff:complete} $A$ is $a$-adically complete;
  \item $a$ is not a zero divisor of $A$; and
  \item \label{CI:leschetz-qaff:sepH1}
    $a^N \shfcoho^1(X,\Orb_{X}) = 0$ for some $N\geq 1$.
  \end{enumerate}
  Then $\shfcoho^0(X,E) \to \shfcoho^0(\hat{X},\hat{E})$ is an
  isomorphism for all vector bundles $E$. This follows immediately
  from Theorem \ref{T:lefschetz-dadic}. For example, let
  $(A,\mathfrak{m})$ be an $\mathfrak{m}$-adically complete noetherian
  local ring and $X=\spec A - \{\mathfrak{m}\}$. Let
  $a\in \mathfrak{m}$ be a non-zero-divisor and let
  $c \colon \hat{X} \to X$ be the $a$-adic completion of $X$ (as a
  formal scheme). If $\mathrm{depth}_{\mathfrak{m}}(A/a) \geq 2$
  (e.g., $A/a$ is $S_2$ or normal or Cohen--Macaulay), then the
  conditions are satisfied. 
\end{example}
\begin{example}
  Let $X$ be an $S_2$-variety over a field $k$ (e.g. normal or
  smooth). Assume $X$ is projective and $D \subseteq X$ is an ample
  divisor. Let $c \colon \hat{X} \to X$ be the $D$-adic
  completion. Then $c^* \colon \Vect(X) \to \Vect(\hat{X})$ is fully
  faithful. Since $D$ is ample and $X$ is projective, $X-D$ is
  affine. Theorem \ref{T:lefschetz-dadic} and Remark
  \ref{R:lefschetz-dadic} now show it is sufficient to prove that
  $\shfcoho^i(X,\Orb_X(-rD)) = 0$ for $r\gg 0$ and $i=0$, $1$. The
  $i=0$ case is \cite[Tag \spref{0FD7}]{stacks-project} and the $i=1$
  case is Enriques--Severi--Zariski vanishing \cite[Tag
  \spref{0FD8}]{stacks-project}. This is closely related to the
  Lefschetz Hyperplane Theorem.
\end{example}
\appendix
 \section{The projection formula}\label{A:projection-formula}
We recall some results on the projection formula. An excellent source is \cite{MR1988072}.

Let $(\mathcal{C},\tensor, \alpha,\sigma,1,\lambda,\rho)$ be a symmetric monoidal category. That is,
\begin{itemize}
\item $\mathcal{C}$ is a category;
\item $-\tensor - \colon \mathcal{C} \times \mathcal{C}
  \to \mathcal{C}$ is a functor;
\item for each triple $x$, $y$, $z\in \mathcal{C}$ there is a functorial isomorphism
  \[
    \alpha_{x,y,z} \colon (x\tensor y)\tensor z \simeq x \tensor(y \tensor z),
  \]
  which satisfies the pentagram law;
\item for each pair $x$, $y\in \mathcal{C}$ there is a functorial isomorphism
  \[
    \sigma_{x,y} \colon x\tensor y \simeq y\tensor x 
  \]
  such that $\sigma_{y,x}\circ \sigma_{x,y} =
  \ID{}$, and is compatible with $\alpha$ in the obvious sense;
\item $1\in \mathcal{C}$; 
\item for each $x\in \mathcal{C}$ there are functorial isomorphisms
  \[
    \lambda_x \colon 1 \tensor x \simeq x \quad \mbox{and} \quad \rho_x \colon x \tensor 1 \simeq x,
  \]
  such that $\lambda_1=\rho_1$ and are compatible with $\alpha$ and
  $\sigma$ in the obvious sense.
\end{itemize}
Typically, we will just denote this data by $\mathcal{C}$. For background material on symmetric monoidal categories, we refer the interested reader to \cite{MR0338002,MR1712872,MR0170925,MR213412}.

Let $L \colon \mathcal{C} \to \mathcal{D}$ be a functor, where $\mathcal{C}$ and $\mathcal{D}$ are symmetric monoidal categories. We say $L$ is \emph{lax monoidal} if for each $c_1$, $c_2\in \mathcal{C}$ there is a natural morphism:
\[
  \mu_{c_1,c_2} \colon L(c_1) \tensor_{\mathcal{D}} L(c_2) \to L(c_1 \tensor_{\mathcal{C}} c_2)
\]
and a morphism
\[
  \iota \colon 1_{\mathcal{D}} \to L(1_{\mathcal{C}})
\]
that is all compatible with the monoidal structures in the obvious way. We denote this package of data by $(L,\mu,\iota)$. If $\mu_{c_1,c_2}$ and $\iota$ are always isomorphisms, then we say that $L$ is \emph{strong monoidal}. We have the following trivial lemma.
\begin{lemma}\label{L:composition-monoidal}
  Consider a sequence of lax monoidal functors between symmetric monoidal categories:
  \[
    \mathcal{C}_1 \xrightarrow{(L_1,\mu^{L_1},\iota^{L_1})} \mathcal{C}_2
    \xrightarrow{(L_2,\mu^{L_2},\iota^{L_2})} \cdots
    \xrightarrow{(L_n,\mu^{L_n},\iota^{L_n})} \mathcal{C}_{n+1}.
  \]
  \begin{enumerate}
  \item Then
    $L_n\cdots L_1 \colon \mathcal{C}_1 \to \mathcal{C}_{n+1}$ is lax
    monoidal via the inductively defined:
    \begin{align*}
      \mu^{L_n\cdots L_1}_{c_1,c_2} &= L_n(\mu^{L_{n-1}\cdots L_{1}}_{L_{n-1}\cdots L_1(c_1),L_{n-1}\cdots L_1(c_2)}) \\ 
      \iota^{L_n\cdots L_1} &= L_n(\iota^{L_{n-1}\cdots L_{1}}) 
    \end{align*}
    If the $L_i$ are all strong monoidal, then so too is the
    composition $L_n\cdots L_1$.
  \item For $j \geq i$ consider another sequence of lax monoidal functors:
    \[
      \mathcal{C}_i \xrightarrow{(L_i',\mu^{L_i'},\iota^{L_i'})} \mathcal{C}_{i+1}
    \xrightarrow{(L_{i+1}',\mu^{L_{i+1}'},\iota^{L_{i+1}'})} \cdots
    \xrightarrow{(L_{j}',\mu^{L_{j}'},\iota^{L_{j}'})} \mathcal{C}_{j+1}
    \]
    together with a natural transformation
    \[
      \kappa \colon (L_j\cdots L_i,\mu^{L_j\cdots L_i},\iota^{L_j
        \cdots L_{i}}) \Rightarrow (L_j' \cdots L_i',\mu^{L_j'\cdots
        L_i'} ,\iota^{L_j'\cdots L_i'}).
    \]
    Then there is a natural extension of $\kappa$ to a natural transformation:
    \begin{align*}
      (L_n &\cdots L_1, \mu^{L_n\cdots
      L_1},\iota^{L_n\cdots L_1}) \\
      &\Rightarrow (L_n \cdots L_{j+1}
      L_j'\cdots L_i'L_{i-1} \cdots L_1, \mu^{L_n \cdots L_{j+1}
      L_j'\cdots L_i'L_{i-1} \cdots L_1}, \iota^{L_n \cdots L_{j+1}
      L_j'\cdots L_i'L_{i-1} \cdots L_1}).
    \end{align*}    
  \end{enumerate}
\end{lemma}
Now assume that $L \colon \mathcal{C} \to \mathcal{D}$ is strong monoidal and consider a right adjoint
\[
  R \colon \mathcal{D} \to
  \mathcal{C}.
\]
If $c \in \mathcal{C}$ and $d \in \mathcal{D}$, then we have the resulting unit/counit morphisms
\[
  \eta_c \colon c \to RL(c) \quad
  \mbox{and} \quad \epsilon_{d} \colon LR(d) \to d.
\]
If $d_1$, $d_2 \in \mathcal{D}$, then there is a natural conjugate of $\mu_{c_1,c_2}$,
\[
  \nu_{d_1,d_2} \colon R(d_1)\tensor_{\mathcal{C}} R(d_2) \to R(d_1\tensor_{\mathcal{D}} d_2).
\]
It is obtained as the adjoint to the composition:
\[
  L(R(d_1) \tensor_{\mathcal{C}} R(d_2)) \xrightarrow{L(\mu_{R(d_1),R(d_2)}^{-1})} LR(d_1) \tensor_{\mathcal{D}} LR(d_2) \xrightarrow{\epsilon_{d_1} \tensor \epsilon_{d_2}} d_1 \tensor_{\mathcal{D}} d_2.
\]
Similarly, there is a conjugate to $\iota$:
\[
  \jmath \colon 1_{\mathcal{C}} \to R(1_{\mathcal{D}}).
\]
It is obtained as the adjoint to $\iota^{-1} \colon L(1_{\mathcal{C}}) \to 1_{\mathcal{D}}$. It is easily verified that $(R,\nu,\jmath)$ is lax monoidal. 

Now if $c\in \mathcal{C}$ and $d\in \mathcal{D}$, then there is a natural \emph{projection morphism}
\[
  \pi_{c,d} \colon c \tensor_{\mathcal{C}} R(d)  \to R(L(c)\tensor_{\mathcal{D}} d).
\]
Indeed, it is given as the composition:
\begin{equation}
 c \tensor_{\mathcal{C}}   R(d)  \xrightarrow{\eta_c \tensor \ID{R(d)}} RL(c) \tensor_{\mathcal{C}} R(d) \xrightarrow{\nu_{L(c),d}} R(L(c) \tensor_{\mathcal{D}} d). \label{eq:projection}
\end{equation}
\begin{remark}\label{R:projection-equivalence}
Note that if $L$ is an equivalence, then $\pi_{c,d}$ is an isomorphism.
\end{remark}

There is another way to produce a projection morphism
\[
  \tilde{\pi}_{c,d} \colon c \tensor_{\mathcal{C}} R(d)  \to R(L(c)\tensor_{\mathcal{D}} d).
\]
It can be given as the adjoint to the composition:
\[
  L(c \tensor_{\mathcal{C}} R(d)) \xrightarrow{\mu_{c,R(d)}^{-1}} L(c)
  \tensor_{\mathcal{D}} LR(d) \xrightarrow{\ID{L(c)} \tensor \epsilon_d}
  L(c) \tensor_{\mathcal{D}} d.
\]
We wish to point out that $\nu$ (and so consequently $\pi$) depend on the choice of the right adjoint $R$. Occasionally, it will be useful to observe this, and we do so by using a suitable superscript (e.g., $\nu^{L,R}$). 
\begin{lemma}\label{L:projections-equal}
  $\pi_{c,d} = \tilde{\pi}_{c,d}$. 
\end{lemma}
\begin{proof}
  The adjoint to $\pi_{c,d}$ factors as: 
  \[
    \small{\xymatrix@C+0.9pc{L(c\tensor_{\mathcal{C}} R(d)) \ar[r]^-{L(\eta_c \tensor \ID{})} & L(RL(c) \tensor_{\mathcal{C}} R(d)) \ar[r]^-{L(\mu_{RL(c),R(d)}^{-1})} & LRL(c) \tensor_{\mathcal{D}} LR(d) \ar[r]^-{\epsilon_{L(c)} \tensor \epsilon_d} & L(c) \tensor_{\mathcal{D}} d.}}
  \]
  The following square also commutes, by naturality:
  \[
    \xymatrix@C+1.5pc{L(c \tensor_{\mathcal{C}} R(d)) \ar[d]_{\mu_{c,R(d)}^{-1}} \ar[r]^-{L(\eta_c \tensor \ID{})} & L(RL(c) \tensor_{\mathcal{C}} R(d)) \ar[d]^{L(\mu_{RL(c),R(d)}^{-1})}\\
      L(c) \tensor_{\mathcal{D}} LR(d) \ar[r]^{L(\eta_c) \tensor
        \ID{}}& LRL(c) \tensor_{\mathcal{D}} LR(d).}
  \]
  Hence, the adjoint to $\pi_{c,d}$ factors as:
  \[
   \xymatrix@C+0.6pc{L(c \tensor_{\mathcal{C}} R(d)) \ar[r]^-{\mu_{c,R(d)}^{-1}} & L(c) \tensor_{\mathcal{D}} LR(d) \ar[r]^-{L(\eta_c) \tensor \ID{}} & LRL(c) \tensor_{\mathcal{D}}LR(d) \ar[r]^-{\epsilon_{L(c)} \tensor \epsilon_d} & L(c) \tensor_{\mathcal{D}} d.}
 \]
 By the unit/conuit equations for adjunction, the composition of the last two morphisms results in $\ID{} \tensor \epsilon_d$. 
The result now follows. 
\end{proof}
\begin{remark}\label{R:projection-natural-iso}
  If
  $\kappa \colon (L, \mu,\iota) \Rightarrow (L',\mu',
  \iota')$ is a natural transformation of strong monoidal functors,
  then there is a canonically induced natural transformation
  $\kappa^\vee \colon R' \Rightarrow R$ between chosen right
  adjoints. If $c\in \mathcal{C}$ and $d\in \mathcal{D}$, then it is
  easily verified from Lemma \ref{L:projections-equal} that the
  following diagram commutes:
  \[
    \xymatrix@C+2pc{c \tensor_{\mathcal{C}} R'(d) \ar[r]^{\pi^{L',R'}_{c,d}} \ar[d]_{\ID{} \tensor \kappa^\vee} & R'(L'(c) \tensor_{\mathcal{D}} d) \ar[dr]^{\kappa^\vee} &\\
      c \tensor_{\mathcal{C}} R(d) \ar[r]_{\pi^{L,R}_{c,d}} & R(L(c) \tensor_{\mathcal{D}} d) \ar[r]_{R(\kappa)} & R(L'(c) \tensor_{\mathcal{D}} d).}
  \]
\end{remark}
In the following lemma we record some useful commutative diagrams.
\begin{lemma}\label{L:key-projection-ff}
  Let $c$, $x\in \mathcal{C}$ and $d\in
  \mathcal{D}$. The following diagrams commute.
    \begin{align}
      &\xymatrix@C+1.5pc{L(c \tensor_{\mathcal{C}} R(d))
      \ar[d]_{L(\pi_{c,d})} & \ar[l]_{\mu_{c,R(d)}}  L(c)
                                                    \tensor_{\mathcal{D}} LR(d) \ar[d]^{\ID{}\tensor \epsilon_d} \\
      LR(L(c) \tensor d) \ar[r]^{\epsilon_{L(c) \tensor d}} & L(c)
                                                              \tensor_{\mathcal{D}} d.}\label{LEQ:key-projection-ff:ess}                                                      \\
            &\xymatrix{c \tensor_{\mathcal{C}} x \ar[r]^{\eta_{c \tensor x}}
        \ar[d]_{c \tensor \eta_x} & RL(c \tensor_{\mathcal{C}} x)
         \\ c \tensor_{\mathcal{C}} RL(x)
        \ar[r]_-{\pi_{c,L(x)}} & R(L(c) \tensor_{\mathcal{D}} L(x)) \ar[u]_-{R(\mu_{c,x})}. }  \label{LEQ:key-projection-ff:2}
    \end{align}
\end{lemma}
\begin{proof}
  The diagram \eqref{LEQ:key-projection-ff:ess} is just a restatement
  of Lemma \ref{L:projections-equal}. For the commutativity of 
  \eqref{LEQ:key-projection-ff:2}, we observe that the adjoint of the composition going right
  and then down is simply $\mu_{c,x}^{-1}$. Working the other way, we see that the adjoint map is the composition:
  \[
 \xymatrix@C+0.8pc{ L(c \tensor_{\mathcal{C}} x) \ar[r]^-{L(\ID{}\tensor \eta_x)} & L(c \tensor_{\mathcal{C}} RL(x)) \ar[r]^-{\mu_{c,RL(x)}^{-1}} &  L(c) \tensor_{\mathcal{D}} LRL(x) \ar[r]^-{\ID{} \tensor \epsilon_{L(x)}} & L(c) \tensor_{\mathcal{D}} L(x).}
  \]
  By functoriality and naturality, the following diagram commutes:
  \[
    \xymatrix@C+1pc{L(c\tensor_{\mathcal{C}} x) \ar[r]^{\mu_{c,x}^{-1}} \ar[d]_{L(\ID{}\tensor \eta_x)} & L(c) \tensor_{\mathcal{D}} L(x) \ar[d]^{\ID{} \tensor L\eta_{x}}\\ L(c \tensor_{\mathcal{C}} RL(x)) \ar[r]^{\mu_{c,RL(x)}^{-1}} & L(c) \tensor_{\mathcal{D}} LRL(x).}
  \]
  It follows that the map we are interested in is actually the composition:
  \[
    \xymatrix@C+0.8pc{L(c \tensor_{\mathcal{C}} x) \ar[r]^-{\mu_{c,x}^{-1}} & L(c)
      \tensor_{\mathcal{D}} L(x) \ar[r]^-{\ID{}\tensor L\eta_x} & L(c)
      \tensor_{\mathcal{D}} LRL(x) \ar[r]^-{\ID{}\tensor
        \epsilon_{L(x)}} & L(c) \tensor_{\mathcal{D}} L(x). }
  \]
  By the unit/counit equations for the adjunction, the final two
  morphisms compose to give the identity. The result follows. 
\end{proof}
The following two lemmas establish the functoriality properties of the projection morphism.
\begin{lemma}\label{L:projection-composition}
  Consider strong monoidal functors:
  \[
    \mathcal{C} \xrightarrow{(L,\mu^L,\iota^L)} \mathcal{D}
    \xrightarrow{(S,\mu^S,\iota^S)} \mathcal{D}'.
  \]
  Assume that $L$ and $S$ admit right adjoints $R$ and $T$, respectively. 
  \begin{enumerate}
    \item \label{LI:projection-composition:conj} The conjugate
      (via $RT$) to $\mu^{SL}_{c_1,c_2}$ is the composition:
    \[
    \nu^{SL,RT}_{d_1',d_2'} \colon  RT(d_1') \tensor_{\mathcal{C}} RT(d_2') \xrightarrow{\nu_{T(d_1'),T(d_2')}^{L,R}} R(T(d_1') \tensor_{\mathcal{D}} T(d_2')) \xrightarrow{R(\nu_{d_1',d_2'}^{S,T})} RT(d_1' \tensor_{\mathcal{D}'} d_2').
    \]
  \item \label{LI:projection-composition:proj} If $c\in \mathcal{C}$ and $d'\in \mathcal{D}'$, then the
    following diagram commutes:
    \[
      \xymatrix@C+3pc{c \tensor_{\mathcal{C}} RT(d')
        \ar[r]^{\pi^{L,R}_{c,T(d')}} \ar[dr]_{\pi^{SL,RT}_{c,d'}} & R(L(c)
        \tensor_{\mathcal{D}} T(d')) \ar[d]^{R(\pi^{S,T}_{Lc,d'})} \\ &
        RT(SL(c) \tensor_{\mathcal{D}'} d').}
    \]
  \end{enumerate}
\end{lemma}
\begin{proof}
  Claim \itemref{LI:projection-composition:conj}
  follows from
Lemma \ref{L:composition-monoidal}. Claim
  \itemref{LI:projection-composition:proj} follows from the
  definition of the projection morphism \eqref{eq:projection} and
  \itemref{LI:projection-composition:conj}.
\end{proof}
The following lemma contains key compatibilities between composition,
base change, and the projection formula. For the monoidal categories
and their functors arising between morphisms of ringed spaces or
topoi, such results can usually be checked by hand (e.g., \cite[Tag
\spref{0B6B}]{stacks-project}). For unbounded derived categories of
quasi-coherent sheaves, these arguments can fail,
because the relevant functors are obtained from adjoint functor
theorems.
\begin{lemma}\label{L:projection-functorial-bc}
  Consider a $2$-commutative diagram of symmetric monoidal categories:
  \[
    \xymatrix{\mathcal{C}' \ar[r]^{L'}  & \mathcal{D}'\\
      \mathcal{C} \ar[r]^L \ar[u]^{F} \urtwocell\omit{\kappa}& \mathcal{D} \ar[u]_{S}}
  \]
  Assume that $L$, $L'$, $F$, and
  $S$ are strong monoidal and admit respective right adjoints $R$, $R'$, $G$, and
  $T$.
  \begin{enumerate}
  \item \label{LI:projection-functorial-bc:functorial} If
    $c \in \mathcal{C}$ and $d'\in \mathcal{D}'$, then the following
    diagram commutes:
    \[
      \xymatrix@+1pc{c \tensor_{\mathcal{C}} RT(d')
        \ar[d]_{\pi_{c,T(d')}^{L,R}} \ar[r]^{\ID{}\tensor \kappa^\vee} \ar[dr]^-{\pi^{SL,RT}_{c,d'}}
        & c \tensor_{\mathcal{C}} GR'(d') \ar[r]^-{\pi_{c,R'(d')}^{F,G}} \ar[dr]^-{\pi^{L'F,GR'}_{c,d'}}&
        G(F(c) \tensor_{\mathcal{C}'} R'(d')) \ar[d]^{G\pi_{F(c),d'}^{L',R'}} \\
        R(L(c) \tensor_{\mathcal{D}} T(d')) \ar[r]_-{R\pi_{L(c),d'}^{S,T}} &
        RT(SL(c) \tensor_{\mathcal{D'}} d') \ar[r]_{\kappa^\vee(\kappa
          \tensor \ID{})} & GR'(L'F(c) \tensor_{\mathcal{D'}} d')
      }
    \]
  \item \label{LI:projection-functorial-bc:base-change} If
    $d\in \mathcal{D}$, there are natural base change
    morphisms:
    \[
      \beta_d^\kappa, \tilde{\beta}^\kappa_d \colon FR(d) \to R'S(d),
    \]
    which are adjoint to the compositions:
    \begin{align*}
      R(d) &\xrightarrow{R(\eta_d^{S,T})} RTS(d)
      \xrightarrow{\kappa^\vee(S(d))} GR'S(d)\\
      L'FR(d) &\xrightarrow{\kappa(R(d))} SLR(d) \xrightarrow{S(\epsilon^{L,R}_d)} S(d),
    \end{align*}
    respectively, and $\beta_d^\kappa =
    \tilde{\beta}^\kappa_d$. 
  \item \label{LI:projection-functorial-bc:projection} If
    $c\in \mathcal{C}$ and $d\in \mathcal{D}$, then the following
    diagram commutes:
    \[
      \xymatrix@R-1pc{ &  F(c \tensor R(d))
        \ar[r]^-{F(\pi_{c,d}^{L,R})} & FR(L(c) \tensor d)
        \ar[r]^{\beta_{L(c) \tensor d}}& R'S(L(c) \tensor d) \\ F(c) \tensor FR(d)
        \ar[dr]_{\ID{} \tensor \beta_d} \ar[ur]^{\mu^F_{c,R(d)}} & & &  \\ &
        F(c) \tensor R'S(d) \ar[r]_{\pi^{L',R'}_{F(c),S(d)}} &
        R'(L'F(c) \tensor S(d)) \ar[r]^{R'(\kappa \tensor \ID{})} &
        R'(SL(c) \tensor S(d)) \ar[uu]_{R'(\mu^S_{L(c),d})} }
    \]
\end{enumerate}

\end{lemma}
\begin{proof}
  For \itemref{LI:projection-functorial-bc:functorial}, combine Lemma
  \ref{L:projection-composition}\itemref{LI:projection-composition:proj}
  with Remark \ref{R:projection-natural-iso}. Claim
  \itemref{LI:projection-functorial-bc:base-change} follows from the
  commutativity of the following diagram:
  \[
    \xymatrix@C+1.6pc@R+1pc{ FR(d) \ar[r]^-{FR(\eta^{S,T}_d)} \ar[d]_{\eta^{L',R'}_{FR(d)}} & FRTS(d) \ar[r]^-{F(\kappa^\vee)} \ar[d]_-{\eta^{L',R'}_{FRTS(d)}} & FGR'S(d) \ar[d]^-{\eta^{L',R'}_{FGR'S(d)}} \ar[r]^-{\epsilon^{F,G}_{R'S(d)}} & R'S(d) \ar[d]^-{\eta^{L',R'}_{R'S(d)}}\\
      R'L'FR(d) \ar[r]^-{R'L'FR(\eta^{S,T}_d)} \ar[d]_{R'(\kappa)} & R'L'FRTS(d) \ar[d]_{R'(\kappa)} \ar[r]^{R'L'F(\kappa^\vee)} &
      R'L'FGR'S(d) \ar[r]^-{R'L'(\epsilon^{F,G}_{R'S(d)})} \ar[d]^{R'(\kappa)} & R'L'R'S(d) \ar[d]^{R'(\epsilon^{L',R'}_{S(d)})}\\
      R'SLR(d) \ar[d]_{R'S(\epsilon^{L,R}_d)} \ar[r]^-{R'SLR(\eta^{S,T}_d)} & R'SLRTS(d) \ar[r]^-{R'SL(\kappa^\vee)} \ar[d]_{R'S(\epsilon^{L,R}_{TS(d)})} & R'SLGR'S(d) \ar[d]_{R'(\epsilon^{SL,GR'}_{S(d)})} \ar[r]^{R'(\epsilon^{SL,GR'}_{S(d)})} & R'S(d)\\
    R'S(d) \ar[r]_-{R'S(\eta^{S,T}_d)} & R'STS(d) \ar[r] & R'S(d) \ar@{=}[ur]& }
  \]
  together with the observations that the morphism along the top is
  $\beta^\kappa$, the morphism down the left is
  $\tilde{\beta}^\kappa$, and the morphisms on the bottom and right
  are the identity.

  Claim
  \itemref{LI:projection-functorial-bc:projection} follows from the
  commutativity of the following diagram:
  \[
    \xymatrix@C+1.6pc{F(c \tensor R(d)) \ar[r]^-{F(\eta_c^{L,R} \tensor \ID{})} & F(RL(c) \tensor R(d)) \ar[r]^-{F(\nu^{L,R}_{L(c),d})} & FR(L(c) \tensor d) \ar[d]^{FR(\eta^{S,T}_{L(c) \tensor d})}\\
      F(c) \tensor FR(d) \ar[u]^{\mu^F_{c,R(d)}} \ar[r]^{F(\eta_c^{L,R}) \tensor \ID{}} \ar[d]_{\ID{}\tensor FR(\eta^{S,T}_d)} & FRL(c) \tensor FR(d) \ar[d]_{FR(\eta^{S,T}_{L(c)}) \tensor FR(\eta^{S,T}_d)} \ar[u]^{\mu^F_{RL(c),R(d)}} \ar[ur]_{\mu^{FR}_{L(c),d}} & FRTS(L(c) \tensor d) \ar[d]^{F(\kappa^\vee)}\\
      F(c) \tensor FRTS(d) \ar[r]^-{F(\eta^{SL,RT}_c) \tensor \ID{}} \ar[d]_{\ID{}\tensor F(\kappa^\vee)} & FRTSL(c) \tensor FRTS(d) \ar[ur]_{\mu^{FRTS}_{L(c),d}} \ar[d]_{F(\kappa^\vee) \tensor F(\kappa^\vee)} & FGR'S(L(c) \tensor d) \ar[d]^{\epsilon^{F,G}_{R'S(L(c) \tensor d)}} \\
      F(c) \tensor FGR'S(d)  \ar[d]_{\ID{} \tensor \epsilon^{F,G}_{R'S(d)}} & FGR'SL(c) \tensor FGR'S(d) \ar[ur]_{\mu^{FGR'}_{SL(c),S(d)}} \ar[d]_{\epsilon^{F,G}_{R'SL(c)} \tensor \epsilon^{F,G}_{R'S(d)}} & R'S(L(c) \tensor d)\\ F(c) \tensor R'S(d)  \ar[dr]_{\eta^{L',R'}_{F(c)} \tensor \ID{}} & R'SL(c) \tensor R'S(d) \ar[ur]_{\mu^{R'S}_{L(c),d}}  & R'(SL(c) \tensor S(d)) \ar[u]_{R'(\mu^S_{L(c),d})}\\
    & R'L'F(c) \tensor R'S(d) \ar[r]_{\nu^{L',R'}_{F(c), S(d)}} \ar[u]_{R'(\kappa) \tensor R'(\kappa)} & R'(L'F(c) \tensor S(d)) \ar[u]_{R'(\kappa \tensor \ID{})}}
\]
The commutativity of the squares on the right is just Lemmas
\ref{L:composition-monoidal} and \itemref{LI:projection-composition:conj}.
\end{proof}
An object $c\in \mathcal{C}$ is \emph{dualizable} if there is a triple
$(c^*,s,t)$, where $c^* \in \mathcal{C}$ and
$s \colon 1 \to c \tensor c^*$ and $t \colon c^* \tensor c \to 1$ are
morphisms such that the two compositions
\[
  \xymatrix{c \ar[r]^-{\lambda_c^{-1}}  & 1 \tensor c  \ar[r]^-{s \tensor \ID{}} & (c \tensor c^*) \tensor c \ar[r]^-{\alpha_{c,c^*,c}} & c \tensor (c^* \tensor c) \ar[r]^-{\ID{} \tensor t} & c \tensor 1_{\mathcal{C}} \ar[r]^-{\rho_c} & c,\\
    c^* \ar[r]^-{\rho_{c^*}^{-1}} & c^* \tensor 1 \ar[r]^-{\ID{} \tensor
      s} & c^* \tensor (c \tensor c^*) \ar[r]^-{\alpha_{c^*,c,c^*}} & (c^*
    \tensor c) \tensor c^* \ar[r]^-{t \tensor \ID{}} & 1 \tensor c^*
  \ar[r]^-{\lambda_{c^*}} & c^*}
\]
are the identity morphism. Another way of expressing this is that the
functor $c^* \tensor - $ is left adjoint to $c\tensor -$. In the
following standard lemma, we do not require the existence of a right
adjoint $R$ to $L$.
\begin{lemma}\label{L:pb-dualizable}
  If $(L,\mu,\iota)$ is strong monoidal and $c$ is dualizable, then $L(c)$ is dualizable. More precisely: let
  $(c^*,s,t)$ be a dual of $c$. Then $(L(c^*),s_L,t_L)$, where $s_L$ is the
  composition:
  \[
    1_{\mathcal{D}} \xrightarrow{\iota} L(1_{\mathcal{C}})
    \xrightarrow{L(s)} L(c \tensor_{\mathcal{C}} c^*)
    \xrightarrow{\mu_{c,c^*}^{-1}} L(c) \tensor_{\mathcal{D}} L(c^*),
  \]
  and $t_L$ is the composition:
  \[
    L(c^*) \tensor_{\mathcal{D}} L(c) \xrightarrow{\mu_{c^*,c}} L(c^*\tensor_{\mathcal{C}} c) \xrightarrow{L(t)} L(1_{\mathcal{C}}) \xrightarrow{\iota^{-1}} 1_{\mathcal{D}}. 
  \]
  is dual to $L(c)$.
\end{lemma}
\begin{proof}
  This is a routine diagram chase.
\end{proof}
We now come to the main result of this appendix.
\begin{theorem}\label{AT:projection-formula}
  If $c\in \mathcal{C}$ is dualizable, then
  \[
    \pi_{c,d} \colon c \tensor_{\mathcal{C}} R(d) \to R(L(c) \tensor_{\mathcal{D}} d)
  \]
  is an isomorphism.
\end{theorem}
It is not difficult to prove that $c\tensor_{\mathcal{C}} R(d)$ and
$R(L(c) \tensor_{\mathcal{D}} d)$ are isomorphic when $c$ is
dualizable. The subtlety is showing that this isomorphism can be
witnessed by the projection morphism $\pi_{c,d}$. In applications, this is critical.

The standard reference for Theorem \ref{AT:projection-formula} (in the
context of \emph{closed} symmetric monoidal categories) is
\cite[Prop.~3.12]{MR1988072}. Note that Theorem
\ref{AT:projection-formula} is not actually proved in
\loccit{}---there is an extra coherence condition for strong monoidal
functors specified in \cite[Eq.~3.7]{MR1988072}. It is shown in
\cite[Rem.~2.2.10]{MR2271789}, however, that this coherence condition
is implied by the other conditions. Because of its importance to this
article, we give a self-contained proof here using dualizables.
\begin{proof}[Proof of Theorem \ref{AT:projection-formula}]
  Let $(c^*,s,t)$ be a dual of $c$.  Let $x\in \mathcal{C}$.  Then
  observe that we have the following natural sequence of bijections:
  \begin{align*}
    \Hom_{\mathcal{C}}(x,c \tensor_{\mathcal{C}} R(d))
    &\cong \Hom_{\mathcal{C}}(c^* \tensor_{\mathcal{C}} x, R(d))\\
    &\cong \Hom_{\mathcal{D}}(L(c^* \tensor_{\mathcal{C}} x),d)\\
    &\cong \Hom_{\mathcal{D}}(L(c^*)\tensor_{\mathcal{D}} L(x),d)\\
    &\cong \Hom_{\mathcal{D}}(L(x),L(c) \tensor_{\mathcal{D}} d) & \mbox{(Lemma \ref{L:pb-dualizable})}\\
    &\cong \Hom_{\mathcal{C}}(x,R(L(c)\tensor_{\mathcal{D}} d)).
  \end{align*}
  By the Yoneda lemma, it follows that there is a unique isomorphism
  \[
    \pi'_{c,d} \colon c \tensor_{\mathcal{c}} R(d) \simeq R(L(c)
    \tensor_{\mathcal{D}} d)
  \]
  inducing the above. By Lemma \ref{L:projections-equal}, it remains
  to prove that $\tilde{\pi}_{c,d}=\pi'_{c,d}$. We will do this using
  the Yoneda lemma. Fix $f \colon x \to c \tensor_{\mathcal{C}}
  R(d)$. By definition, the $L$-$R$ adjoint to the composition $\tilde{\pi}_{c,d} \circ f$ we
  can express as the composition:
  \[
    L(x) \xrightarrow{L(f)} L(c \tensor_{\mathcal{C}} R(d))
    \xrightarrow{\mu_{c,R(d)}^{-1}} L(c) \tensor_{\mathcal{D}} LR(d)
    \xrightarrow{\ID{} \tensor \epsilon_d} L(c) \tensor_{\mathcal{D}}
    d.
  \]
  The adjoint to this morphism (afforded by $L(c^*)\tensor-$ and
  $L(c)\tensor-$) is thus the composition: 
  \begin{align*}
    L(c^*) \tensor_{\mathcal{D}} L(x) &\xrightarrow{\ID{}\tensor L(f)} L(c^*) \tensor_{\mathcal{D}} L(c \tensor_{\mathcal{C}} R(d))\\
                                      &\xrightarrow{\ID{} \tensor \mu_{c,R(d)}^{-1}} L(c^*) \tensor_{\mathcal{D}} (L(c) \tensor_{\mathcal{D}} LR(d))\\
    &\xrightarrow{\alpha_{\mathcal{D},L(c^*),L(c), LR(d)}} (L(c^*) \tensor_{\mathcal{D}} L(c)) \tensor_{\mathcal{D}} LR(d) \\
    &\xrightarrow{\ID{} \tensor \epsilon_d} (L(c^*) \tensor_{\mathcal{D}} L(c)) \tensor_{\mathcal{D}} d \xrightarrow{t_L \tensor \ID{}} 1_{\mathcal{D}} \tensor_{\mathcal{D}} d \xrightarrow{\lambda_{\mathcal{D},d}} d.
  \end{align*}
  We now look at the image of
  $f$ under the compositions defining $\pi'_{c,d}$ via the
  Yoneda lemma.  What we see is that:
  \begin{align*}
    f &\mapsto (c^* \tensor_{\mathcal{C}} x \xrightarrow{\ID{} \tensor f} c^* \tensor_{\mathcal{C}} (c \tensor_{\mathcal{C}} R(d)) \xrightarrow{\alpha_{\mathcal{C},c^*,c,R(d)}} (c^* \tensor_{\mathcal{C}} c) \tensor_{\mathcal{D}} R(d)\\
      &\qquad \xrightarrow{t \tensor \ID{}} 1_{\mathcal{C}} \tensor_{\mathcal{C}} R(d) \xrightarrow{\lambda_{\mathcal{C},R(d)}} R(d))\\
      &\mapsto (L(c^* \tensor_{\mathcal{C}} x)\xrightarrow{L(\ID{} \tensor f)} L(c^* \tensor_{\mathcal{C}} (c \tensor_{\mathcal{C}} R(d))) \\
      &\qquad \xrightarrow{L(a_{\mathcal{C},c^*,c,R(d)})} L((c^*\tensor_{\mathcal{C}} c) \tensor_{\mathcal{C}} R(d)) \xrightarrow{L(t \tensor \ID{})} L(1_{\mathcal{C}} \tensor_{\mathcal{C}} R(d)) \\
      &\qquad \xrightarrow{L(\lambda_{R(d)})} LR(d) \xrightarrow{\epsilon_d} d)
  \end{align*}
  It remains to show that precomposing the above morphism with
  $\mu_{c^*,x}$ coincides with the other morphism described
  above. This follows from the commutativity of the following diagram,
  and that all of the vertical arrows are
  isomorphisms:
  \[
    \tiny{\xymatrix@C-0.8pc@R+1pc{L(c^* \tensor x) \ar[r]^-{\ID{}\tensor L(f)} \ar[d]_{\mu_{c^*,x}^{-1}} & L(c^* \tensor (c\tensor Rd)) \ar[r]^-{\ID{}\tensor \mu_{c,Rd}^{-1}} \ar[d]_{\mu_{c^*,c\tensor Rd}^{-1}}  & L((c^*\tensor c) \tensor Rd) \ar[r]^-{\alpha_{\mathcal{D}}} & L(c^*\tensor c) \tensor LRd \ar[r]^{L(t)\tensor \ID{}} \ar[d]_{\mu_{c^*,c}^{-1} \tensor \ID{}} & L(1_{\mathcal{C}}) \tensor LRd \ar[d]^{\iota \tensor \ID{}}\\
        L(c^*) \tensor L(x) \ar[r]_-{\ID{} \tensor L(f)} & L(c^*)
        \tensor L(c \tensor Rd) \ar[r]^-{\ID \tensor \mu_{c,Rd}^{-1}} &
        L(c^*) \tensor (Lc \tensor LRd) \ar[r]^{L(\alpha_{\mathcal{C}})} &
        (L(c^*) \tensor Lc) \tensor LRd \ar[r]^{t_L \tensor \ID{}} &
        1_{\mathcal{D}} \tensor LRd.}}
  \]
\end{proof}

\section{Two lemmas for ringed topoi}
We include in this appendix two simple lemmas, which we expect to be
well-known to experts.
\begin{lemma}\label{L:projection-topoi-affine}
  Let $\mathcal{W}$ be a ringed topos. Let $\mathcal{B}$ be a sheaf of
  $\Orb_{\mathcal{W}}$-algebras. Let $\mathcal{W}'$ be the ringed
  topos $(\mathcal{W},\mathcal{B})$. There is an induced morphism of ringed topoi
  $j \colon \mathcal{W}' \to \mathcal{W}$. Let
  $\mathcal{M} \in \DCAT(\mathcal{W})$ and
  $\mathcal{N} \in \DCAT(\mathcal{W}')$. Then the projection morphism
  \[
    \pi_{\mathcal{M},\mathcal{N}} \colon \mathcal{M}
    \tensor^{\LDERF}_{\Orb_{\mathcal{W}}} \RDERF j_*\mathcal{N} \to
    \RDERF j_*(\LDERF j^*\mathcal{M} \tensor^{\LDERF}_{\Orb_{\mathcal{W'}}}
    \mathcal{N})
  \]
  is an isomorphism. In particular, if
  $\mathcal{Q} \in \DPCOH{-}(\mathcal{W})$ and
  $\mathcal{P} \in \langle \RDERF j_*\DPCOH{-}(\mathcal{W}')\rangle$,
  then
  $\mathcal{Q} \tensor^{\LDERF}_{\Orb_{\mathcal{W}}} \mathcal{P} \in
  \langle \RDERF j_*\DPCOH{-}(\mathcal{W}')\rangle$.
\end{lemma}
\begin{proof}
  Let $\mathcal{F}$ be a K-flat complex of
  $\Orb_{\mathcal{W}}$-modules quasi-isomorphic to $\mathcal{M}$ and $\mathcal{P}$ a
  K-flat complex of $\Orb_{\mathcal{W}'}$-modules quasi-isomorphic to
  $\mathcal{N}$. The exactness of $j_*$ implies that
  $\mathcal{M} \otimes^{\LDERF}_{\Orb_{\mathcal{W}}} \RDERF j_*\mathcal{N}$ is the total
  complex of 
  $(\mathcal{F}^r \otimes_{\Orb_{\mathcal{W}}} j_*\mathcal{P}^s)_{r,s}$. Clearly,
  \[
    \mathcal{F}^r \otimes_{\Orb_{\mathcal{W}}} j_*\mathcal{P}^s = j_*(j^*\mathcal{F}^r \otimes_{\Orb_{\mathcal{W}'}} \mathcal{P}^s).
    \]
    Moreover, $j_*$ commutes with the formation of total complexes (it
    commutes with small coproducts). The result is now immediate. For
    the latter claim, we set
    \[
      \DCAT_{\mathcal{Q}} = \{ \mathcal{R} \in \langle \RDERF
      j_*\DPCOH{-}(\mathcal{W}')\rangle \suchthat \mathcal{Q}
      \tensor^{\LDERF} \mathcal{R} \in \langle \RDERF
      j_*\DPCOH{-}(\mathcal{W}')\rangle\}.
    \]
    Clearly, $\DCAT_{\mathcal{Q}}$ is a thick triangulated subcategory
    of $\langle \RDERF j_*\DPCOH{-}(\mathcal{W}')\rangle$. Thus, 
    it suffices to prove that 
    $\mathcal{N} \in \DPCOH{-}(\mathcal{W}')$ implies
    $\mathcal{Q} \tensor^{\LDERF}_{\Orb_{\mathcal{W}}} \RDERF
    j_*\mathcal{N} \in \langle \RDERF
    j_*\DPCOH{-}(\mathcal{W}')\rangle$. This is obvious from the
    projection formula.
\end{proof}
\begin{lemma}\label{L:tor-ind-topoi}
  Let $\pi \colon \mathcal{Y} \to \mathcal{W}$ be a morphism of ringed
  topoi. Let $\mathcal{B}$ be a sheaf of
  $\Orb_{\mathcal{W}}$-algebras. Let $\mathcal{W}'$ and $\mathcal{Y}'$
  be the ringed topoi $(\mathcal{W},\mathcal{B})$ and
  $(\mathcal{Y},\pi^*\mathcal{B})$, respectively. There is an induced
  $2$-commutative diagram of ringed topoi:
  \[
    \xymatrix{\mathcal{Y}' \ar[r]^{\pi'} \ar[d]_{j'} &
      \mathcal{W}' \ar[d]^j \\ \mathcal{Y} \ar[r]^\pi & \mathcal{W}.}
  \]
  If $\pi$ and $j$ are tor-independent and 
  $\mathcal{N} \in \DCAT(\mathcal{W}')$, then there is a natural
  isomorphism:
  \[
    \LDERF \pi^*\RDERF j_*\mathcal{N} \simeq \RDERF
    j'_*\LDERF \pi'^*\mathcal{N}.
  \]
  In particular, if
  $\mathcal{Q} \in \thick{\RDERF j_*\DPCOH{-}(\mathcal{W}')}$, then
  $\LDERF \pi^*\mathcal{Q} \in \thick{\RDERF j'_*\DPCOH{-}(\mathcal{Y}')}$.
\end{lemma}
\begin{proof}
  Now $j_*$, $j'_{*}$ are exact and
  $\pi^{-1}j_{*} = j'_{*}\pi'^{-1}$. By tor-independence of
  $\pi$ and $j$:
  \begin{align*}
    \LDERF \pi^*\RDERF j_{*}\mathcal{N} &= \Orb_{\mathcal{Y}} \otimes_{\pi^{-1}\Orb_{\mathcal{W}}}^{\LDERF} \pi^{-1}j_{*}\mathcal{N}\simeq (\Orb_{\mathcal{W}} \otimes_{\pi^{-1}\Orb_{\mathcal{Y}}}^{\LDERF} \pi^{-1}j_*\Orb_{\mathcal{Y}'}) \otimes^{\LDERF}_{\pi^{-1}j_*\Orb_{\mathcal{Y}'}}\pi^{-1}j_{*}\mathcal{N}\\
                       &\simeq j_{*}'\Orb_{\mathcal{Y}'} \otimes^{\LDERF}_{\pi^{-1}j_*\Orb_{\mathcal{Y}'}}\pi^{-1}j_{*}\mathcal{N}\\
                       &\simeq j_{*}'\Orb_{\mathcal{Y}'} \otimes^{\LDERF}_{j_{*}'\pi'^{-1}\Orb_{\mathcal{W}'}} j_{*}'\pi'^{-1}\mathcal{N}\\
                       &\simeq j_{*}'(\Orb_{\mathcal{Y}'} \otimes^{\LDERF}_{\pi'^{-1}\Orb_{\mathcal{W}'}} \pi'^{-1}\mathcal{N})= \RDERF j_{*}'\LDERF \pi'^*\mathcal{N}.
  \end{align*}
  The latter claim follows from a similar argument to that
  in
  Lemma \ref{L:projection-topoi-affine}.
\end{proof}
 \bibliographystyle{bibtex_db/dary}
\bibliography{bibtex_db/references}
\end{document}